\documentclass[11pt]{article}

\usepackage{bbold}
\usepackage{bbm}
\usepackage[dvips]{graphics} 
\usepackage[usenames]{color}

\usepackage{fig4tex} 
\usepackage{theorem}
\usepackage{amssymb, amsmath, amsbsy, amsfonts}
\usepackage{stmaryrd}
\usepackage{times}

\usepackage{graphics,graphicx}

\setlength{\topmargin}{-10mm} 
\setlength{\textheight}{225mm} 
\setlength{\oddsidemargin}{-5mm} 
\setlength{\textwidth}{177mm}

\parindent4mm

\newtheorem{theorem}{Theorem}[section]
\newtheorem{lemma}[theorem]{Lemma}
\newtheorem{proposition}[theorem]{Proposition}

\newtheorem{corollary}[theorem]{Corollary}
{\theorembodyfont{\rmfamily}
\newtheorem{example}[theorem]{Example}
\newtheorem{remark}[theorem]{Remark}
}

\newcommand{\Z}{\mathbb{Z}}
\newcommand{\R}{\mathbb{R}}
\newcommand{\T}{\mathbb{T}}

\newcommand{\F}{\mathcal{F}}
\newcommand{\D}{\mathcal{D}}
\newcommand{\Om}{\Omega}
\newcommand{\la}{\lambda}
\newcommand{\Supp}{{\rm Supp}}
\newcommand{\conv}{{\rm Conv}}

\newcommand{\footnoteremember}[2]{\footnote{#2}\newcounter{#1}\setcounter{#1}{\value{footnote}}}

\def\qed{\hfill $\square$ \goodbreak \bigskip}
\def\eps{\varepsilon}

\title{Regularity and Singularities of \\Optimal Convex Shapes in the Plane}

\author{Jimmy Lamboley\footnoteremember{1}{CEREMADE, Universit\'e Paris-Dauphine, Place du Mar\'echal de Lattre de Tassigny, 75775 Paris, France}, Arian Novruzi\footnote{University of Ottawa, Department of Mathematics and Statistics, 585 King Edward, Ottawa, ON, K1N 6N5, Canada}, Michel Pierre\footnote{ENS Cachan Bretagne, IRMAR, UEB, av Robert Schuman, 35170 Bruz, France}}
\date{\today}

\begin{document}
\maketitle
\begin{abstract}

We focus here on the analysis of the regularity or singularity of solutions $\Om_{0}$ to shape optimization problems among convex planar sets, namely:
\begin{equation*}
J(\Om_{0})=\min\{J(\Om),\ \Om\ \textrm{convex},\ \Omega\in\mathcal S_{ad}\},
\end{equation*}
where $\mathcal S_{ad}$ is a set of 2-dimensional admissible shapes and $J:\mathcal{S}_{ad}\rightarrow\R$ is a shape functional. 

Our main goal is to obtain qualitative properties of these optimal shapes by using first and second order optimality conditions, including the infinite dimensional Lagrange multiplier due to the convexity constraint. We prove two types of results:
\begin{itemize}
\item[i)]
under a suitable convexity property of the functional $J$, we prove that $\Omega_0$ is a $W^{2,p}$-set, $p\in[1,\infty]$.
This result applies, for instance, with $p=\infty$ when the shape functional can be written as 
$
J(\Omega)=R(\Omega)+P(\Omega),
$
where 
$R(\Om)=F(|\Om|,E_{f}(\Om),\la_{1}(\Om))$ involves the area $|\Omega|$, the Dirichlet energy $E_f(\Omega)$ or the first eigenvalue  of the Laplace-Dirichlet operator $\lambda_1(\Omega)$, and 
$P(\Omega)$ is the perimeter of $\Om$,
\item[ii)]
under a suitable concavity assumption on the functional $J$, we prove that $\Omega_{0}$ is a polygon.
This result applies, for instance, when the functional is now written as
$
J(\Om)=R(\Omega)-P(\Omega),
$ with the same notations as above. 

\end{itemize}

\noindent{\it Keywords:\,} Shape optimization, convexity constraint, optimality conditions, regularity of free boundary.
\smallskip

\end{abstract}

\section{Introduction}\label{sect:intro}

The goal of this paper is to develop general and systematic tools to prove the regularity or the singularity of optimal shapes in shape optimization problems {\em among convex planar sets}, namely problems like:
\begin{equation}\label{eq:pb}
\min\{J(\Om),\ \Om\ \textrm{convex},\ \Omega\in\mathcal S_{ad}\},
\end{equation}
where $\mathcal S_{ad}$ is a set of admissible shapes among subsets of $\R^2$ and $J:\mathcal{S}_{ad}\rightarrow\R$ is a shape functional. 
Our main objective is to obtain qualitative properties of optimal shapes by exploiting first and second order optimality conditions on \eqref{eq:pb} where {\em the convexity constraint} is included through appropriate infinite dimensional Lagrange multipliers.

{\em Our approach is analytic} in the sense that convex sets are represented through adequate parametrizations and we work with  the corresponding "shape functionals" defined on {\em spaces of functions}. In particular, we will use the classical polar coordinates representation of convex sets as follows:
\begin{equation}\label{eq:Omega_u}
\Om_u:=\left\{(r,\theta)\in [0,\infty)\times\R\;;\;r<\frac{1}{u(\theta)}\right\},
\end{equation}
where $u$ is a positive  and $2\pi$-periodic function, often called ``gauge function of $\Omega_{u}$''. 
It is well-known that
\begin{equation}\label{eq:conv}
\Om_u\textrm{ is convex }\Longleftrightarrow u''+u\geq 0.
\end{equation}
Thus, Problem \eqref{eq:pb} may be transformed into the following: 
\begin{equation}\label{eq:pbu}
\min\left\{j(u):=J(\Om_u),\; u''+u\geq0,\; u\in\mathcal F_{ad}\right\},
\end{equation}
where $\F_{ad}$ is a space of $2\pi$-periodic functions which will be chosen appropriately to represent $\mathcal S_{ad}$ in (\ref{eq:pb}).\\

We obtain two families of results depending on whether $j$ is "of convex type" or "of concave type". In the first case, we prove regularity of the optimal shapes. In the second case, we prove that optimal shapes are polygons.
\begin{itemize}
\item[i)] {\em "Optimal shapes are regular":} under a suitable convexity property on the "main part" of the functional $j$, we prove that any solution $u_0$ of \eqref{eq:pbu} is $W^{2,p}$, which means that the curvature of $\partial\Omega_0=\partial\Omega_{u_0}$ is an $L^p$ function whereas it is a priori only a measure: see Theorems \ref{th:reg}, \ref{th:reg+constraint} and Corollary \ref{cor:reg}.
To that end, {\em we simply use the first optimality condition} for the problem \eqref{eq:pb}.

The functionals under consideration here are of the form $ J(\Omega)=R(\Omega)+C(\Omega)$, where $r(u):=R(\Omega_u)$ has an $L^p$-derivative and $C$ is like (\ref{eq:geometric}) below and satisfies a convexity condition. As a main example, we consider
$R(\Om)=F(|\Om|,E_{f}(\Om),\la_{1}(\Om))$ which depends on the area $|\Omega|$, on the Dirichlet energy $E_f(\Omega)$ and/or on the first eigenvalue $\lambda_1(\Omega)$ of the Laplace operator on $\Omega$ (with Dirichlet boundary conditions), and 
$C(\Omega)=P(\Omega)$ is its perimeter: see Section \ref{sect:examples-smooth}. In this case, we actually prove that the optimal shape is $W^{2,\infty}$ which means that the curvature is bounded.

\item[ii)] {\em "Optimal shapes are polygons":}
next, we prove that, under a suitable concavity assumption on the functional $j$, for any solution $u_0$ of 
\eqref{eq:pbu}, $u_0+u_0''$ is (locally) a finite sum of Dirac masses, so that $\Omega_{u_0}$ is (locally) a polygon: see Theorems \ref{th:conc}, \ref{th:conc+constraint} and Corollary \ref{cor:conc}. The proof of this result is {\em based on the second order optimality condition} for the problem \eqref{eq:pb}.
We apply this result to shape optimization problems where $
J(\Om)=R(\Omega)-P(\Omega)$ where $R(\Omega)=F(|\Om|,E_{f}(\Om),\la_{1}(\Om))$ with the same notations as above, see Section \ref{sect:examples-polygon}. This application involves some sharp  estimates on the second shape derivative of the energy which are interesting for themselves: see Section \ref{ssect:dir2}.
\end{itemize}

Our examples enlighten and exploit the fact that, in the context of shape optimization under convexity constraint, the perimeter is ``stronger'' than usual energies involving PDE, in terms of the influence on the qualitative properties of optimal shapes: if it appears in the energy as a positive term, it has a smoothing effect on optimal shapes, and on the opposite as a negative term, it leads to polygonal optimal shapes.\\

\noindent{\em Dual parametrization:} Since our results are stated for the analytic functionals \eqref{eq:pbu}, we may apply them to the dual parametrization of convex sets instead of the parametrization with the gauge function: each convex shape can also be associated to its support function $h_{\Om}(\theta)=\max\{x\cdot e^{i\theta},\; x\in\Omega\}, \theta\in\T$ and \eqref{eq:pb} again leads to the problem:
\begin{equation}\label{eq:pbh}
\min\left\{\widetilde{j}(h),\; h''+h\geq0,\; h\in\widetilde{\mathcal F_{ad}}\right\},
\end{equation}
where $\widetilde{j}(h):=J(\Om^h)$, $\Om^h$ being now the set whose support function is $h$, and $\widetilde{\F_{ad}}$ are all support functions of admissible shapes $\Om\in\mathcal S_{ad}$. 
In this framework, if $\widetilde{j}$ satisfies the suitable convexity property, the regularity result (i) above holds for $h_{0}$ minimizer of \eqref{eq:pbh}. However, this regularity does not imply  that the corresponding optimal shape $\Om_{0}:=\Om^{h_{0}}$ is regular, but it exactly means that $\Om_{0}$ is strictly convex: see Section \ref{sect:support}.

The situation is more similar to the gauge representation when exploiting the results (ii). Indeed, when they apply, they imply that the optimal shape is polygonal as well: see Remark \ref{rem:polyh}.\\

\noindent{\em Situation with respect to previous results:} The second family of results (ii) is an extension of previous results obtained in \cite{LN} by the two first authors for the specific following functionals of ``local type":
\begin{equation}\label{eq:geometric}
J(\Om_u)=\int_0^{2\pi} G\left(\theta,u(\theta),u'(\theta)\right) d\theta,
\end{equation}
where  $G=G(\theta,u,q): \T\times[0,\infty)\times \R\to \R$ is strictly concave in $q$.
Among these functionals, we find for instance the area $|\Omega|$, the perimeter $P(\Omega)$ or also the famous Newton's problem of the body of minimal resistance as studied by  T. Lachand-Robert and coauthors: see for example \cite{CLR,LRP} and see also \cite{LN,MC} for more examples arising in the operator theory. Actually, the techniques employed in \cite{LN}, and here as well for (ii), are inspired from those introduced in \cite{LRP}. The main novelty here in the results (ii) is that the functionals are not necessarily of the local form (\ref{eq:geometric}) and may include shape functionals defined through state functions which are solutions of partial differential equations (PDE). The ``concavity condition" is then expressed in a functional way through the coercivity of the second derivative in an adequate functional space : see Theorem \ref{th:conc}. In \cite{BFL}, a similar concavity phenomenon is used to get qualitative properties of minimizers in higher dimension, under assumptions about their regularity and convexity. We avoid here any assumption of this kind for the planar case.
 
 The general optimality conditions including the infinite dimensional Lagrange multipliers were also provided (and exploited) in the same paper \cite{LN}. They are revisited here in an $W^{1,\infty}$-context which is better adapted to our more general functionals (see e.g. Proposition \ref{prop:order1}).
 
 Similar arguments to those used here to obtain the first family of results (i) may also be found in \cite{C} where optimality conditions with convexity constraints are developed in an $N$-dimensional setting. They are exploited for several examples in dimension 1 (or in radial situations) to obtain $C^1$-regularity of the optimal shapes. With our approach here, we are able to reach $W^{2,\infty}$-regularity and this is valid for a rather general family of functionals.\\
 
\noindent{\em About a localization of the approach:} Let us mention that our two families of results may be mixed in the same functional: indeed, as often the case, it may be that the required convexity property for (i) is valid on some part of the boundary of the optimal shape, while the concavity property for (ii) is valid on the other part. Then, the techniques developed here may be locally applied to each part and we can obtain at the same time smooth and polygonal pieces in the boundary. However, as one expects, it remains difficult to understand the portion of the boundary which remains at the intersection of these two parts. We refer to Section \ref{sect:u'3} for more details.\\

To end this introduction, let us say that many questions are of interest in shape optimization among convex sets. Here, we try to exploit as much as possible analytical tools to obtain precise qualitative results for optimal shapes among convex planar sets. But many questions are left open in higher dimensions. Among them, and besides the Newton's problem already mentioned, we can quote the famous Mahler conjecture about the minimization of the so-called Mahler-product $|K||K^\circ|$ among symmetric convex bodies in $\R^d$ (see \cite{T}), which is of great interest in convex geometry and functional analysis, and the P\'olya-Szeg\"o conjecture about the minimization of the Newtonian capacity among convex bodies of $\R^3$ whose surface area is given (see for example \cite{CFG} and reference therein). 

This paper is structured as follows. In the following section we state our main results. In 
Section \ref{sect:W2p} we focus on the regularity result (i) and we apply it to some various examples. In Section \ref{sect:polygon}, we deal with problems leading to polygonal solutions (result (ii)), and we again consider in detail some classical examples. We conclude with some remarks and perspectives.

\section{Main results}\label{sect:results}

\subsection{Notations and problems}\label{sect:not}

We set $\T:=[0,2\pi)$. Throughout the paper, any function defined on $\T$ is considered as the restriction to $\T$ of a $2\pi$-periodic function on $\R$. We define $W^{1,\infty}(\T):=\{u\in W^{1,\infty}_{loc}(\R), u\textrm{ is }2\pi\textrm{-periodic}\}$, and similarly for any functional space. If $u\in W^{1,\infty}(\T)$, we say that $u''+u\geq0$ if
\begin{equation*}
\forall\; v\in W^{1,\infty}(\T)\textrm{ with }v\geq0,\;\;\int_{\T} \left(uv - u'v'\right)d\theta\geq0.
\end{equation*}
In this case, $u''+u$ is a nonnegative $2\pi$-periodic measure on $\R$ and finite on $[0,2\pi]$.

We denote by $\mathcal S_{ad}$ a class of open bounded sets in $\R^2$ (including constraints besides convexity). We will focus on two problems:
\begin{equation}\label{eq:minshape1}
\min\{J(\Om),\ \Omega\in\mathcal S_{ad},\ \Om\ \textrm{convex}\}, 
\end{equation}
\begin{equation}\label{eq:minshape2}
\min\{J(\Om),\ \Omega\in\mathcal S_{ad},\ \Om\ \textrm{convex}, M(\Om)=M_{0}\},
\end{equation}
where $J:\mathcal S_{ad}\to\R$ is referred as the energy and $M:\mathcal S_{ad}\to\R^d$ is an extra constraint ($M_{0}$ given in $\in\R^d$).

In order to analyze the regularity of an optimal shape, we transform these problems into minimization problems in a functional analytic setting as follows: choosing an origin $O$ and using parameterization \eqref{eq:Omega_u}, we define 
\begin{equation}\label{eq:Fad}
\mathcal F_{ad}:=\{u\in W^{1,\infty}(\T),\;\; \Omega_u\in \mathcal S_{ad}\},
\end{equation}
 the set of admissible gauge functions,
endowed with the $\|\cdot\|_{W^{1,\infty}(\T)}$-norm, and we assume that this set can be written
\begin{equation}\label{eq:Fad2}
\mathcal F_{ad}=\{u\in W^{1,\infty}(\T)\;/\; k_{1}\leq u\leq k_{2}\textrm{ and }u>0\},
\end{equation}
for some functions $k_{1},k_{2}:\T\to\overline{\R_{+}}$ respectively upper- and lower-semicontinuous (see Remark \ref{k1k2} below for this assumption).

 A simple calculus of the curvature shows that $\Om_u$ is convex if and only if $u''+u\geq0$. Moreover, the support of the measure $u''+u$ gives a parametrization of the ``strictly convex part'' of the boundary, and  a Dirac mass in this measure correspond to a corner of the associated shape; we have for instance that $\Om_{u}$ is a convex polygon if and only if $u''+u$ is a finite sum of positive Dirac masses.

If $\Om_{0}$ is a solution of problem \eqref{eq:minshape1} (resp. \eqref{eq:minshape2}), then its gauge function $u_{0}$ is respectively solution of: 
\begin{equation}\label{eq:min}
j(u_0)=\min\big\{j(u),\;\;\ u''+u\geq 0,\; u\in\mathcal{F}_{ad}\,\big\},
\end{equation}
\begin{equation}\label{eq:ming}
{\rm resp.}\;\;j(u_0)=\min\big\{j(u)\;/\; u\in\mathcal{F}_{ad}, u''+u\geq 0\textrm{ and }m(u)=M_{0}\big\}, \end{equation}
where $j:\mathcal F_{ad}\mapsto\mathbb R$,  $j(u):=J(\Omega_u)$, and $m:\F_{ad}\to\R$, $m(u)=M(\Omega_u)$.\\

Our main goal in this paper is the analysis of the convexity constraint. Thus, given an optimal shape $\Om_{0}$, we focus on the part of $\partial\Omega_0$ which does not saturate the other constraints defined by $\mathcal S_{ad}$. We therefore define, for $u_{0}\in \mathcal{F}_{ad}$ and $\Om_{0}=\Om_{u_{0}}$,
\begin{eqnarray}
\T_{in}
&:=&
\T_{in}(\mathcal{F}_{ad},u_{0})
=\{\theta\in\T\; /\; k_{1}(\theta)<u_{0}(\theta)<k_{2}(\theta)\},  \label{eq:inside}
\\
(\partial\Omega_0)_{in}
&:=&\left\{x\in\partial\Omega_0 \;/\; \exists \theta\in \T_{in}, x=\frac{1}{u_{0}(\theta)}(\cos\theta,\sin\theta)\right\}.
\label{eq:inside2}
\end{eqnarray}
See Example \ref{ex:inclusion} and Figure 1 for examples.

\begin{remark}\label{k1k2}
If $k_{1}$ or $k_{2}$ happened not to be semicontinuous, we could replace them by
$$\overline{k_{1}}=\inf\{k:\T\to\R\textrm{ continuous }, k\geq k_{1}\}, \quad \underline{k_{2}}=\sup\{k:\T\to\R\textrm{ continuous }, k\leq k_{2}\}$$
and we have
$$\{u\in W^{1,\infty}(\T)\;/\; {k_{1}}\leq u\leq {k_{2}}\}=\{u\in W^{1,\infty}(\T)\;/\; \overline{k_{1}}\leq u\leq \underline{k_{2}}\}.$$
Therefore, the assumptions on $k_{1}$ and $k_{2}$ are not restrictive.
Note that, thanks to the regularity of $u_{0},k_{1},k_{2}$, the set $\T_{in}$ is open.
\end{remark}

\begin{example}\label{ex:inclusion}
%
\begin{figure}[htbp]
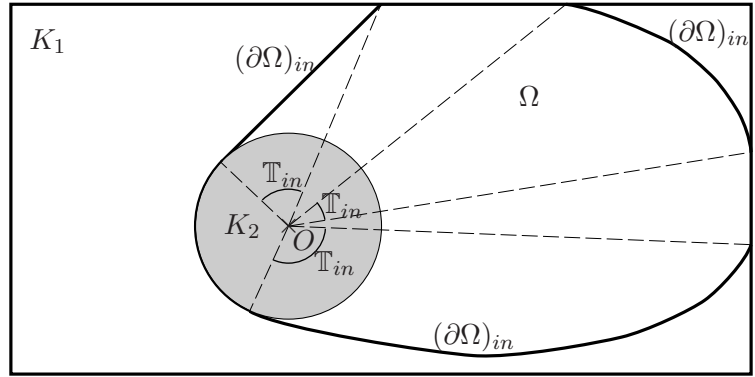

\figinit{0.7pt}
\figpt 0:(0,0)
\figpt 1:(200,100)
\figpt 2:(-200,100)
\figpt 3:(-200,-100)
\figpt 4:(200,-100)

\figpt 5:(-50,-20)

\figpt 6:(-86.5,14.5)
\figpt 7:(0,100)
\figpt 8:(100,100)

\figpt 9:(155,80)
\figpt 10:(180,60)
\figpt 11:(194,40)
\figpt 13:(200,20)

\figpt 14:(200,-30)
\figpt 15:(175,-60)
\figpt 16:(130,-80)
\figpt 17:(50,-90)
\figpt 18:(-60,-80)
\figpt 19:(-71,-66)
\figpt 20:(-100,0)


\figpt 21:(177,84)
\figpt 22:(-53,8)
\figpt 23:(-75,-20)
\figpt 24:(-180,80)
\figpt 25:(80,50)
\figpt 26:(50,-80)
\figpt 27:(-57,70)
\figpt 28:(-25,-40)
\figpt 29:(-21,-9)

\psbeginfig{}

\psset(fillmode=no,color=0)
\psset(dash=1)
\psset(width=1.2)
\psline[1,2,3,4,1]
\psline[6,7]
\psset curve(roundness=0.11)
\pscurve[7,8,9,10,11,13,14]
\pscurve[13,14,15,16,17,19,20]
\psset(width=0.8)
\pscirc 5(50)
\psset(fillmode=yes,color=0.8)
\pscirc 5(50)
\psset(fillmode=no,color=0)
\psset(width=0.8)
\psarccirc 5 ; 50.5(135,246)
\psline[7,8]
\psline[13,14]

\psset(width=0.5)
\psarccirc 5 ; 20(70,135)
\psarccirc 5 ; 20(10,40)
\psarccirc 5 ; 20(-115,-4)
\psset(width=0.3)
\psset(dash=2)
\psline[5,19]
\psline[5,8]
\psline[5,7]
\psline[5,6]
\psline[5,13]
\psline[5,14]

\psendfig
\figvisu{\figBoxA}{}{
\figwritec [5]{$\times$}
\figwritese 5:{$O$}(3)
\figwritec [21]{$(\partial\Omega)_{in}$}
\figwritec [22]{$\mathbb{T}_{in}$}
\figwritec [23]{$K_{2}$}
\figwritec [24]{$K_{1}$}
\figwritec [25]{$\Omega$}
\figwritec [26]{$(\partial\Omega)_{in}$}
\figwritec [27]{$(\partial\Omega)_{in}$}
\figwritec [28]{$\mathbb{T}_{in}$}
\figwritec [29]{$\mathbb{T}_{in}$}
}
\centerline{\box\figBoxA}
\caption{
Inclusion constraints}
\end{figure}
A frequent example for admissible shapes $\mathcal S_{ad}$ is:
$$\mathcal S_{ad}:=\left\{\Om\textrm{ bounded open set of }\R^2\;/\;K_{2}\subset\Om\subset K_{1}\right\},$$
where $K_{2}$ and $K_{1}$ are two given bounded open sets.
If for example $K_{1}$ and $K_{2}$ are starshaped with respect to a common point $O$, chosen as the origin, then
$$\mathcal{F}_{ad}=\{u\in W^{1,\infty}(\T)\; /\; k_{1}\leq u\leq k_{2}\},$$
where $k_{1}$, $k_{2}$ are the gauge functions of $K_{2}$ and $K_{1}$ respectively. In that case, given a set $\Om\in \mathcal{S}_{ad}$,
$$(\partial\Om)_{in}=\partial\Om\setminus{(\partial K_{1}\cup\partial{K_{2}})},$$
see Figure 1.

The analysis of the optimal shape around the set $\{\theta\;/\;u_0(\theta)\in\{k_{1}(\theta),k_{2}(\theta)\}\}=\T\setminus\T_{in}$, where the inclusion constraint is saturated, may require more efforts, see \cite{LN} for example. In this paper, we will not discuss this question.

Note that we can also consider the case $K_{2}=\emptyset$ and/or $K_{1}=\R^2$ with $k_{1}=0$ and $k_{2}=+\infty$.
\end{example}

\begin{example} With respect to the constraints $m,M$ in (\ref{eq:minshape2}), (\ref{eq:ming}), a classical example is the area constraint: 
$$
m(u):=
|\Om_{u}|=A_{0}\;\Longleftrightarrow\; \int_{\T}\frac{1}{2u^2}d\theta=A_{0},
$$
where $|\Omega|$ denotes the area of $\Omega$.
\end{example}

\subsection{The main results}
As explained in the introduction, Section \ref{sect:intro}, we will prove two types of results: they are described in the two following subsections.

\subsubsection{"Optimal shapes are smooth"}\label{sect:reg}

First we consider the problem \eqref{eq:minshape1} and its associated analytical version \eqref{eq:min}. We assume that $J(\Omega)=R(\Omega)+C(\Omega)$, $R$ satisfying some ``regularity'' assumption, and $C$ being written like in \eqref{eq:geometric}, and satisfying a convexity like property. More precisely:

\begin{theorem}\label{th:reg} 
Let $u_{0}>0$ be an optimal solution of \eqref{eq:min} with $\F_{ad}$ of the form \eqref{eq:Fad2} and
\begin{equation}\label{eq:j}
j(u):=r(u)+\displaystyle{\int_{\mathbb T} G\left(\theta,u(\theta),u'(\theta)\right) d\theta},
\quad
\forall u\in W^{1,\infty}(\Om)\cap\{u>0\},
\end{equation}
where $r$ and $G$ satisfy:
\begin{itemize}
\item[i)]
$r:W^{1,\infty}(\T)\to\R$ is $ C^1$ around $u_{0}$ and $G:(\theta,u,q)\in\T\times(0,\infty)\times\R\to\R$ is $ C^2$ around $\T\times u_{0}(\T)\times \conv(u_{0}'(\T))$, where $\conv(u_0'(\T))$ is the smallest (bounded) closed interval containing the values of the right- and left-derivatives $u_0'(\theta^+), u_0'(\theta^-), \theta\in\T$, 
\item[ii)]
$r'(u_{0})\in L^p(\T)$ for some $p\in[1,\infty]$,
\item[iii)]
$G_{qq}>0$ in $\T\times u_{0}(\T)\times \conv(u_{0}'(\T))$.
\end{itemize}
Then
$$u_{0}\in W^{2,p}(\T_{in}), \textrm{ where }\T_{in}\textrm{ is defined in }\eqref{eq:inside}.$$
\end{theorem}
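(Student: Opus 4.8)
The plan is to exploit the first-order optimality condition for the convexity-constrained problem, writing it in terms of the Lagrange multiplier associated with the constraint $u''+u\geq 0$, and then to use the strict convexity assumption $G_{qq}>0$ to invert a second-order ODE operator and gain regularity.

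\medskip

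\noindent\textbf{Step 1: Writing the optimality condition.} First I would record the Euler--Lagrange condition for \eqref{eq:min} at $u_0$. Since the constraint set $\{u''+u\geq 0\}$ is convex and $\mathcal F_{ad}$ is of the form \eqref{eq:Fad2}, on the open set $\T_{in}$ the pointwise bounds $k_1<u_0<k_2$ are inactive, so only the convexity constraint is present there. The standard Lagrange multiplier theory (as revisited in the paper in the $W^{1,\infty}$-context, cf. the cited Proposition~\ref{prop:order1}) should give a nonnegative $2\pi$-periodic measure, or rather its "potential", encoding $\langle j'(u_0),v\rangle \geq 0$ for admissible directions $v$. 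Concretely, if $\mu:=u_0''+u_0\geq 0$, I expect the condition to say that there is a function (the multiplier) $\zeta$ such that $j'(u_0)=\zeta''+\zeta$ in the sense of distributions on $\T_{in}$, with $\zeta\geq 0$ and $\zeta=0$ on the support of $\mu$ (complementarity). Here $j'(u_0)$ is the element of the dual computed from \eqref{eq:j}: it equals $r'(u_0)+G_u(\theta,u_0,u_0') - \frac{d}{d\theta}\big(G_q(\theta,u_0,u_0')\big)$ in the distributional sense.

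\medskip

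\noindent\textbf{Step 2: Reformulating on $\T_{in}$ and removing the multiplier.} The key structural observation is that on the (open) set where $\mu>0$, i.e. on $\Supp(\mu)\cap\T_{in}$, complementarity forces $\zeta\equiv 0$, hence $j'(u_0)=0$ there; and on the complementary open set $\T_{in}\setminus\Supp(\mu)$, $u_0$ satisfies $u_0''+u_0=0$, so $u_0$ is locally a trigonometric polynomial $a\cos\theta+b\sin\theta$ and is in particular $C^\infty$ there, so $W^{2,p}$ regularity is automatic. Thus it suffices to prove $u_0\in W^{2,p}$ near each point of $\Supp(\mu)\cap\T_{in}$, where we have the genuine Euler--Lagrange equation
\[
r'(u_0)+G_u(\theta,u_0,u_0') - \frac{d}{d\theta}\big(G_q(\theta,u_0,u_0')\big)=0
\]
in the distributional sense. (In fact one should treat the whole of $\T_{in}$ uniformly: wherever $\zeta>0$ we have $u_0''+u_0=0$; wherever $\zeta=0$ we have the E--L equation; a measurable partition argument, or working with the measure $\zeta''+\zeta=j'(u_0)\in L^p$ directly, ties the two pieces together.)

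\medskip

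\noindent\textbf{Step 3: Bootstrapping via $G_{qq}>0$.} This is where assumption (iii) enters. The distributional identity above says that the function $\psi(\theta):=G_q(\theta,u_0(\theta),u_0'(\theta))$ has distributional derivative $\psi'=r'(u_0)+G_u(\theta,u_0,u_0')\in L^\infty\subset L^p$ (using that $r'(u_0)\in L^p$ by (ii), $u_0,u_0'\in L^\infty$, and $G\in C^2$). Hence $\psi\in W^{1,p}(\T_{in})$, in particular $\psi$ is continuous. Now I invoke the inverse function theorem pointwise: since $G_{qq}>0$ on the relevant compact set $\T\times u_0(\T)\times\conv(u_0'(\T))$, the map $q\mapsto G_q(\theta,u,q)$ is a $C^1$-diffeomorphism onto its image, with $C^1$ inverse depending on $(\theta,u)$; writing $u_0'(\theta)=\Phi(\theta,u_0(\theta),\psi(\theta))$ with $\Phi$ of class $C^1$ and $(\theta,u_0,\psi)\in W^{1,p}$, the chain rule for Sobolev functions gives $u_0'\in W^{1,p}(\T_{in})$, i.e. $u_0\in W^{2,p}(\T_{in})$. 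For $p=\infty$ this reads directly off $W^{1,\infty}\circ C^1$; for finite $p$ one uses that composition of a $C^1$ function with a $W^{1,p}\cap L^\infty$ function stays in $W^{1,p}$.

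\medskip

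\noindent\textbf{Main obstacle.} The routine analytic bootstrap (Step 3) is not the difficulty; the delicate point is Step 1--2, namely making the first-order optimality condition rigorous in the $W^{1,\infty}$ setting with the infinite-dimensional multiplier, and justifying that on $\T_{in}$ one genuinely recovers the pointwise Euler--Lagrange equation wherever the convexity constraint is saturated (and the harmonic equation where it is not), gluing the two regimes. One must be careful that admissible perturbations $v$ must keep $u_0+tv$ in $\mathcal F_{ad}$ and convex; near $\Supp(\mu)$ only one-sided perturbations are allowed, which is exactly why a signed multiplier measure appears. Controlling the interplay between the support of $\mu$, the zero set of the multiplier, and the open set $\T_{in}$ — and checking the complementarity slackness rigorously for measures rather than functions — is the technical heart. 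I would expect the paper to isolate the optimality condition as a separate proposition (the referenced Proposition~\ref{prop:order1}) and to quote it here, so that the proof of Theorem~\ref{th:reg} itself reduces to Steps 2--3.
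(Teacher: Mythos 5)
There is a genuine gap in your Step 2, and it propagates into Step 3. You assert that on $\Supp(\mu)\cap\T_{in}$ complementarity forces $j'(u_0)=0$, i.e.\ that the unconstrained Euler--Lagrange equation holds wherever the convexity constraint is active. This is false in general. The multiplier identity reads $j'(u_0)=\zeta_0+\zeta_0''$, and complementarity only gives $\zeta_0=0$ \emph{on the closed set} $\Supp(u_0''+u_0)$, which need not be open (for a polygon it is a finite set of points; a priori it could be Cantor-like). Vanishing of the function $\zeta_0$ on a set with empty interior tells you nothing about the vanishing of the measure $\zeta_0''$ there; the only information one can extract is the one-sided inequality $\zeta_0''\geq 0$ on $[\zeta_0=0]$, which the paper isolates as Lemma~\ref{l:zeta''}. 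Consequently your claim in Step 3 that $\psi(\theta)=G_q(\theta,u_0,u_0')$ has distributional derivative $r'(u_0)+G_u\in L^p$ is unjustified: the true identity on $\T_{in}$ is \eqref{eq:opti2}, whose right-hand side contains the measure $\zeta_0+\zeta_0''$ — precisely the object whose regularity has to be established, not assumed away. (The inversion of $q\mapsto G_q$ you propose would be a legitimate alternative to the paper's direct division by $\widetilde{G_{qq}}$, but only \emph{after} the right-hand side is known to be in $L^p$.)

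What is missing is the two-stage control of the measures. First, one decomposes both $u_0''=\mu_{ac}+\mu_s$ and $\zeta_0''=n_{ac}+n_s$ into absolutely continuous and singular parts and identifies the singular parts of the two sides of \eqref{eq:opti2}; since all other terms are functions, this gives $-\mu_s\widetilde{G_{qq}}=n_s$ on $\T_{in}$. The three sign conditions — $\mu_s\geq 0$ (from $u_0''+u_0\geq 0$), $n_s\geq 0$ on $\Supp(\mu_s)\subset[\zeta_0=0]$ (Lemma~\ref{l:zeta''}), and $\widetilde{G_{qq}}>0$ (hypothesis (iii)) — then force $\mu_s=n_s=0$, so $u_0\in W^{2,1}(\T_{in})$. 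Second, on $\T_{in}\cap\Supp(u_0''+u_0)$ one combines $\zeta_0''\geq 0$ with the upper bound obtained from \eqref{eq:opti2} and $-u_0''G_{qq}\leq u_0G_{qq}$ to get $0\leq\zeta_0''\leq r'(u_0)+G_u-G_{\theta q}-G_{uq}u_0'+u_0G_{qq}\in L^p$, hence $\zeta_0''\in L^p$ and, dividing back by $G_{qq}$ (bounded below), $u_0''\in L^p$ there; off the support one has $u_0''=-u_0\in L^\infty$. Your proposal contains neither the singular-part cancellation nor the two-sided $L^p$ bound on $\zeta_0''$, and these are the technical heart of the theorem rather than routine bookkeeping.
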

See Section \ref{sect:proof-reg} for the proof, and Section \ref{sect:examples-smooth} for explicit examples.
\begin{remark}\label{rk:measure}
A $C^1$-regularity result has been proved for a similar problem with $r=0$ in \cite{C} with different boundary conditions, with a proof which is also based on first order optimality conditions. Here, for periodic boundary conditions (but this is not essential), we improve this result to the $ C^{1,1}$-regularity, and generalize it to the case of non-trivial $r$, which is of great interest for our applications. Let us also refer to \cite{CLR2} for a higher dimensional result.

Let us remark that the same result is valid, with the same proof, if we only assume that
$r'(u_{0})$ is the sum of a function in $L^p(\T)$ and of a nonpositive measure on $\T$.
\hfill$\Box$
\end{remark}

\noindent
We can also get a similar result for the equality constrained problem \eqref{eq:minshape2} and the associated problem
\eqref{eq:ming} as follows.
\begin{theorem}\label{th:reg+constraint} 
Let $u_{0}>0$ be an optimal solution of \eqref{eq:ming} with $j, \F_{ad}$ as in Theorem \ref{th:reg}, and $m:W^{1,\infty}\to\R^d$ a $ C^1$ function 
around $u_{0}$ with $m'(u_{0})\in (L^p(\T))^d$ onto.
Then
$$u_{0}\in W^{2,p}(\T_{in}).$$
\end{theorem}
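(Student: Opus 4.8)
The plan is to reduce Theorem~\ref{th:reg+constraint} to Theorem~\ref{th:reg} by absorbing the equality constraint $m(u)=M_0$ into the energy via a finite-dimensional Lagrange multiplier. First I would write down the first-order optimality condition for the constrained problem \eqref{eq:ming}. Since $m'(u_0)\in(L^p(\T))^d$ is onto (so the constraint is qualified at $u_0$) and the convexity constraint $u''+u\geq 0$ is a convex cone constraint, a standard Lagrange multiplier argument in the Banach space $W^{1,\infty}(\T)$ yields a vector $\mu\in\R^d$ and a nonnegative measure (the multiplier for the convexity constraint) such that $u_0$ is a critical point, in the appropriate variational-inequality sense, of $u\mapsto j(u)-\mu\cdot m(u)$ subject to $u''+u\geq 0$ and $u\in\F_{ad}$. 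Equivalently, $u_0$ is a \emph{stationary point} of the unconstrained-in-$m$ functional
\begin{equation*}
\widetilde{j}(u):=j(u)-\mu\cdot m(u)=\widetilde{r}(u)+\int_{\T}G(\theta,u,u')\,d\theta,
\qquad \widetilde{r}(u):=r(u)-\mu\cdot m(u),
\end{equation*}
over the convex set $\{u\in\F_{ad}:u''+u\geq 0\}$.

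The second step is to check that $\widetilde{j}$ still satisfies hypotheses (i)--(iii) of Theorem~\ref{th:reg}. The function $G$ is untouched, so (iii) holds verbatim, and the regularity of $G$ in (i) is unchanged. For (i) and (ii) applied to $\widetilde r$: $m$ is $C^1$ around $u_0$ by assumption, hence $\widetilde r=r-\mu\cdot m$ is $C^1$ around $u_0$; and $\widetilde r\,'(u_0)=r'(u_0)-\mu\cdot m'(u_0)\in L^p(\T)$ since both $r'(u_0)$ and each component of $m'(u_0)$ lie in $L^p(\T)$ and $\mu\in\R^d$ is a fixed vector. The one subtlety is that Theorem~\ref{th:reg} is stated for a \emph{minimizer} of $j$, whereas after introducing $\mu$ we only know that $u_0$ is a \emph{stationary point} of $\widetilde j$ (it need not minimize $\widetilde j$ over the convex constraint set). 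So I would make sure that the proof of Theorem~\ref{th:reg} (in Section~\ref{sect:proof-reg}) uses only the \emph{first-order optimality condition} — which the introduction explicitly says is the case — and restate it, if needed, as: any $u_0$ satisfying the first-order optimality inequality for $j$ on $\{u''+u\geq 0\}\cap\F_{ad}$ belongs to $W^{2,p}(\T_{in})$. Granting that, the conclusion $u_0\in W^{2,p}(\T_{in})$ follows immediately by applying that statement to $\widetilde j$.

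More concretely, the core of the argument (inherited from Theorem~\ref{th:reg}) runs as follows once the multiplier is in hand. On $\T_{in}$ the inclusion constraints are inactive, so test functions supported in $\T_{in}$ are freely admissible in both directions along the cone, and the optimality condition forces, on $\T_{in}$, an Euler--Lagrange relation of the form: the distribution $-\partial_\theta\big(G_q(\theta,u_0,u_0')\big)+G_u(\theta,u_0,u_0')+\widetilde r\,'(u_0)$ equals a nonnegative measure $\nu$ which moreover annihilates the (open) set where $u_0''+u_0>0$; by complementarity $\nu$ is supported on $\{u_0''+u_0=0\}$, and on that set $u_0$ is smooth (an arc of a line segment), so in all cases $\nu$ restricted to $\T_{in}$ is controlled and, combined with $\widetilde r\,'(u_0)\in L^p$, gives $\partial_\theta\big(G_q(\theta,u_0,u_0')\big)\in L^p(\T_{in})$. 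Since $G_{qq}>0$, the map $q\mapsto G_q(\theta,u,q)$ is a $C^1$-diffeomorphism onto its image with $C^1$ inverse, so one inverts to recover $u_0'\in W^{1,p}(\T_{in})$, i.e. $u_0\in W^{2,p}(\T_{in})$.

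The main obstacle I anticipate is the justification of the Lagrange multiplier $\mu$ itself, i.e.\ the passage from the constrained problem \eqref{eq:ming} to the stationarity of $\widetilde j$: one must handle simultaneously the \emph{infinite-dimensional} inequality constraint $u''+u\geq 0$ (a closed convex cone with empty interior in $W^{1,\infty}$, which is delicate) and the \emph{finite-dimensional} equality constraint $m(u)=M_0$, and argue that the surjectivity of $m'(u_0)$ (plus the structure of the cone, or an approximation/penalization of it) suffices for a qualification condition so that no abnormal multiplier appears. The paper states that such optimality conditions with the infinite-dimensional multiplier are available (see Proposition~\ref{prop:order1} and \cite{LN}); I would invoke that machinery, check that the surjectivity hypothesis on $m'(u_0)$ is exactly what is needed to add the finite-dimensional constraint without disturbing it, and then the rest is the routine verification of (i)--(iii) for $\widetilde j$ described above.
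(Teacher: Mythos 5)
Your overall strategy is exactly the paper's: introduce a finite-dimensional multiplier $\mu\in\R^d$ for the equality constraint (justified by the surjectivity of $m'(u_0)$), observe that $\mu\cdot m'(u_0)\in L^p(\T)$ so that $\widetilde r:=r-\mu\cdot m$ still satisfies hypotheses (i)--(ii) of Theorem~\ref{th:reg}, and note that the proof of Theorem~\ref{th:reg} only uses the first-order optimality condition and therefore applies to the stationary point of the Lagrangian. That reduction is the entire content of the paper's proof of Theorem~\ref{th:reg+constraint}, and you carry it out correctly, including the two points that actually need checking (constraint qualification, and first-order-only).

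One caveat about your ``more concretely'' paragraph, which misdescribes the inherited mechanism. The multiplier for the convexity constraint is not a nonnegative measure $\nu$ supported on $\{u_0''+u_0=0\}$: in Proposition~\ref{prop:order1} the Euler--Lagrange right-hand side is the distribution $\zeta_0+\zeta_0''$, where the complementarity and sign conditions bear on the \emph{function} $\zeta_0$ ($\zeta_0\ge 0$, $\zeta_0=0$ on $\Supp(u_0''+u_0)$), not on the distribution $\zeta_0+\zeta_0''$, which is in general neither nonnegative nor supported off $\Supp(u_0''+u_0)$. In fact the whole difficulty of the proof of Theorem~\ref{th:reg} lives on $\Supp(u_0''+u_0)$ (the strictly convex part, where your sketch asserts the multiplier vanishes): there one must show $\zeta_0''\ge 0$ (Lemma~\ref{l:zeta''}), use the identification of singular parts in \eqref{eq:opti2} together with $G_{qq}>0$ to kill the singular part of $u_0''$, and then sandwich $\zeta_0''$ between $0$ and an $L^p$ function to conclude $u_0''\in L^p$ there; the flat set $\{u_0''+u_0=0\}$ is the trivial part. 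Since you invoke Theorem~\ref{th:reg} as a black box and only need its statement (plus the fact that its proof is first-order), this does not invalidate your proof of Theorem~\ref{th:reg+constraint}; but the sketch, taken literally as an argument, would not go through.
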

See Section \ref{sect:proof-reg} for the proof, and Section \ref{sect:examples-smooth} for explicit examples.\\

\noindent
For a shape functional, using parametrization \eqref{eq:Omega_u},
Theorems \ref{th:reg} and \ref{th:reg+constraint} lead to the following.
\begin{corollary}\label{cor:reg} Let $\mathcal S_{ad}$ be a class of open sets in $\R^2$ such that $\F_{ad}:=\{u\;/\;\Om_{u}\in\mathcal S_{ad}\}$ is of the form \eqref{eq:Fad2} ($\Om_{u}$ is defined in \eqref{eq:Omega_u}), and let $J:\mathcal S_{ad}\to\R$ be a shape functional:\\
i) 
Let $\Om_0$ be an optimal shape for
problem \eqref{eq:minshape1}, and assume that $J=R+C$ with:
$$\forall u\in\mathcal{F}_{ad},\quad R(\Om_{u})=r(u)\textrm{ and }C(\Om_{u})=\int_{\T}G(\theta,u(\theta),u'(\theta))d\theta,$$
where $r$ and $G$ satisfy assumptions of Theorem \ref{th:reg} for some $p\in[1,\infty]$.
Then $(\partial\Omega_0)_{in}$, as defined in \eqref{eq:inside2}, is $\mathcal{C}^1$ and its curvature is in $L^p((\partial\Om_{0})_{in})$.\\
ii)
A similar results holds for the problem \eqref{eq:minshape2}, if $m(u)=M(\Om_{u})$ satisfies the hypotheses in Theorem \ref{th:reg+constraint}.
\end{corollary}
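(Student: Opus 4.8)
The plan is to deduce Corollary~\ref{cor:reg} directly from Theorems~\ref{th:reg} and \ref{th:reg+constraint} by translating the functional-analytic conclusion $u_{0}\in W^{2,p}(\T_{in})$ into the geometric statement about $(\partial\Omega_{0})_{in}$. First I would fix an optimal shape $\Omega_{0}$ for \eqref{eq:minshape1}, choose the origin $O$ used to define the parametrization \eqref{eq:Omega_u}, and let $u_{0}$ be the gauge function of $\Omega_{0}$; since $\F_{ad}=\{u\;/\;\Omega_{u}\in\mathcal S_{ad}\}$ is assumed to be of the form \eqref{eq:Fad2}, $u_{0}$ is admissible and, because $\Omega_{0}$ is convex, satisfies $u_{0}''+u_{0}\geq0$, hence $u_{0}$ solves \eqref{eq:min}. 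The decomposition $J=R+C$ with $R(\Omega_{u})=r(u)$ and $C(\Omega_{u})=\int_{\T}G(\theta,u(\theta),u'(\theta))\,d\theta$ makes $j(u)=J(\Omega_{u})$ take the form \eqref{eq:j}, and by hypothesis $r$ and $G$ satisfy conditions (i)--(iii) of Theorem~\ref{th:reg}. Therefore Theorem~\ref{th:reg} applies and gives $u_{0}\in W^{2,p}(\T_{in})$.

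Next I would convert this regularity of $u_{0}$ into regularity of the boundary arc. The map $\theta\mapsto x(\theta):=\frac{1}{u_{0}(\theta)}(\cos\theta,\sin\theta)$ parametrizes $\partial\Omega_{0}$, and on the open set $\T_{in}$ we have $u_{0}>0$ and $u_{0}\in W^{2,p}\subset C^{1}$ (Sobolev embedding in one dimension, valid for every $p\in[1,\infty]$). Hence $x\in C^{1}(\T_{in};\R^{2})$; moreover one checks that $x'(\theta)$ never vanishes, since $|x'(\theta)|^{2}=\frac{1}{u_{0}^{2}}+\frac{(u_{0}')^{2}}{u_{0}^{4}}>0$, so $x$ is a regular $C^{1}$ immersion and $(\partial\Omega_{0})_{in}=x(\T_{in})$ is a $C^{1}$ curve. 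For the curvature, I would recall the standard formula for the curvature of $\Omega_{u}$ in polar coordinates, namely that it is proportional to $\frac{u''+u}{(u^{2}+u'^{2})^{3/2}}$ times powers of $u$; the key point is that $u_{0}\in W^{2,p}(\T_{in})$ means $u_{0}''\in L^{p}(\T_{in})$, while $u_{0}$ and $u_{0}'$ are continuous and bounded away from degeneracy on $\T_{in}$, so the curvature, expressed through this formula, is an $L^{p}$ function on $(\partial\Omega_{0})_{in}$ with respect to arclength (the change of variables $d s=|x'(\theta)|\,d\theta$ has bounded and bounded-below Jacobian on compact subsets, and a localization/exhaustion argument handles $\T_{in}$ being merely open). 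This proves part~(i).

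For part~(ii) the argument is identical, except that $\Omega_{0}$ is now optimal for the equality-constrained problem \eqref{eq:minshape2}, so $u_{0}$ solves \eqref{eq:ming} with $m(u)=M(\Omega_{u})$, and one invokes Theorem~\ref{th:reg+constraint} in place of Theorem~\ref{th:reg} using the hypothesis that $m$ is $C^{1}$ around $u_{0}$ with $m'(u_{0})\in(L^{p}(\T))^{d}$ onto; the conclusion $u_{0}\in W^{2,p}(\T_{in})$ is the same, and the geometric translation above is unchanged.

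The main obstacle — really the only non-bookkeeping point — is the passage from $u_{0}\in W^{2,p}(\T_{in})$ to ``$\,$curvature of $(\partial\Omega_{0})_{in}$ is in $L^{p}$'': one must be careful that $\T_{in}$ is only open (a countable union of arcs), that the arclength parametrization relates to $\theta$ by a bi-Lipschitz change of variables only on compact subarcs where $u_{0}$ is bounded below, and that the curvature formula genuinely places $u_{0}''$ linearly in the numerator with a smooth nonvanishing denominator, so no loss of integrability occurs. Everything else is a direct verification that the hypotheses of the corollary feed exactly into the hypotheses of the two theorems.
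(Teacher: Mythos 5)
Your proposal is correct and follows exactly the route the paper intends: the paper states the corollary as an immediate consequence of Theorems \ref{th:reg} and \ref{th:reg+constraint} via the parametrization \eqref{eq:Omega_u}, and your translation (Sobolev embedding $W^{2,p}\subset C^1$ in one dimension, the curvature formula $\kappa=u^3(u''+u)/(u^2+u'^2)^{3/2}$ placing $u_0''$ linearly over a nonvanishing denominator, and the bi-Lipschitz arclength change of variables) supplies precisely the details the paper leaves implicit. Note only that since $u_0$ is continuous and positive on the compact $\T$, it is uniformly bounded below, so the localization on compact subarcs is not actually needed.
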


\begin{remark}
The results of this section are in an abstract analytical context, and do not depend on the characterization of the domain.
Therefore, one could consider the classical characterization of a convex body with its support function instead of the gauge function. In Section \ref{sect:support}, we give a geometrical interpretation of similar results associated to this parametrization.
\end{remark}

\subsubsection{"Optimal shapes are polygons"}\label{sssect:poly}

Our second result is a generalization of Theorem 2.1 from \cite{LN}. We give a sufficient condition on the shape functional $J$ so that any solution of \eqref{eq:pb} be a polygon. In \cite{LN}, the first two authors only consider shape functionals of local type like \eqref{eq:geometric}. 
The following results deal with non-local functionals, which allow a much larger class of applications, including shape functionals depending on a PDE.

\begin{theorem}\label{th:conc}
Let $u_{0}>0$ be a solution for \eqref{eq:min} with $\F_{ad}$ of the form \eqref{eq:Fad2}, and assume that $j:W^{1,\infty}(\T)\to\R$ is $\mathcal{C}^2$ around $u_{0}$ and satisfies (see Section \ref{sect:proof-conc} for definitions of $H^s$-(semi-)norms):
\begin{eqnarray}
&&
\exists s\in[0,1), \alpha>0, \beta,\gamma\in [0,\infty),\; \mbox{\it such that}\;\nonumber\\
&&
\forall v\in W^{1,\infty}(\T),\quad j''(u_0)(v,v)\leq -\alpha|v|^2_{H^1(\mathbb T)} + \gamma |v|_{H^1(\mathbb T)}\|v\|_{H^{s}(\mathbb T)}+\beta\|v\|^2_{H^{s}(\mathbb T)}.
\label{eq:coercivite}
\end{eqnarray}
If $I$ is a connected component of $\T_{in}$ (defined in \eqref{eq:inside}), then
$$
u_{0}''+u_{0}\textrm{ is a finite sum of Dirac masses in }I.
$$
\end{theorem}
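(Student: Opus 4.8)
The plan is to show that on any connected component $I$ of $\T_{in}$ the measure $\mu := u_0'' + u_0 \ge 0$ cannot have infinitely many points in its support, because a suitable competitor built from a second-order variation would contradict the coercivity hypothesis \eqref{eq:coercivite}. The starting point is the first-order optimality condition: since $u_0$ minimizes $j$ under the convexity constraint $u''+u\ge 0$ and the (open, so inactive) inclusion constraints on $I$, there is a nonnegative Lagrange multiplier. The key structural fact to exploit — presumably established just before this theorem in the paper — is that on $\T_{in}$ the optimality condition forces the multiplier to be supported exactly where $\mu$ is supported, so that admissible perturbations $v$ supported in $I$ with $v''+v$ having the right sign at points of $\Supp(\mu)$ are ``costless'' at first order, i.e. $j'(u_0)(v)\ge 0$ for all such $v$ and $=0$ when $v$ keeps $u_0''+u_0$ nonnegative both ways.

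\textbf{Key steps.} First I would argue by contradiction: suppose $\Supp(\mu)\cap I$ is infinite, so it has an accumulation point $\theta_0\in I$ (using that $I$ is open and $\mu$ is finite on compact subsets). Second, I would construct, for each small scale, a perturbation $v=v_\eps$ supported in a tiny interval $J_\eps\subset I$ around $\theta_0$ containing many points of $\Supp(\mu)$, chosen so that $u_0\pm tv$ remains admissible for small $t$ (this is where one needs several mass points in $J_\eps$: one builds $v$ so that $v''+v$ vanishes on $\Supp(\mu)\cap J_\eps$, or is controlled there, so convexity is preserved to second order — exactly the mechanism of \cite{LRP} and \cite{LN}). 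Third, first-order optimality gives $j'(u_0)(v)=0$, hence $0\le j(u_0+tv)-j(u_0) = \tfrac{t^2}{2}j''(u_0)(v,v)+o(t^2)$, so $j''(u_0)(v,v)\ge 0$. Fourth, I would plug this $v$ into \eqref{eq:coercivite} to get $\alpha|v|^2_{H^1}\le \gamma|v|_{H^1}\|v\|_{H^s}+\beta\|v\|^2_{H^s}$, and then obtain the contradiction by a scaling estimate: for $v$ supported on an interval of length $\ell\to 0$, the $H^s$-norm with $s<1$ is small relative to the $H^1$-seminorm (concretely $\|v\|_{H^s}\lesssim \ell^{1-s}|v|_{H^1}$ after normalizing, so the right-hand side is $o(|v|^2_{H^1})$), while $|v|_{H^1}>0$ since $v$ is a nonzero perturbation — contradiction for $\ell$ small enough. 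This forces $\Supp(\mu)\cap I$ to be finite, and since $\mu\ge 0$ is a measure with finite support on $I$, it is a finite sum of Dirac masses there, i.e. $\Omega_{u_0}$ is locally polygonal.

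\textbf{Main obstacle.} The delicate point is the construction in the second step: one must produce, near an accumulation point of $\Supp(\mu)$, a perturbation $v$ that is genuinely admissible for the two-sided variation $u_0\mapsto u_0\pm tv$ — meaning $u_0''+u_0 \pm t(v''+v)\ge 0$ as measures — while having a concentrated, quantitatively controlled $H^1$-versus-$H^s$ profile. Since $\mu$ may be a complicated measure (not just finitely many Diracs a priori, possibly with a diffuse part), ensuring $v''+v\ge 0$ relative to $\mu$ on the support requires choosing $v$ adapted to the local structure of $\mu$; the standard trick is to pick three or more points $\theta_1<\theta_2<\theta_3$ of $\Supp(\mu)$ in the small interval and let $v$ be the solution of $v''+v=0$ on each subinterval with appropriate matching, so that $v''+v$ is a combination of Diracs at the $\theta_i$ with signs that can be absorbed by the masses $\mu(\{\theta_i\})$, or more robustly a density argument. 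Managing this together with the sharp scaling of the Sobolev norms — so that $\beta,\gamma$ are truly harmless — is the technical heart; everything else (first-order condition, the second-order Taylor expansion, concluding finiteness) is routine once this competitor is in hand. I also expect one needs to handle the case where the accumulation happens at an endpoint of $I$ separately, but since $I$ is open this does not arise internally, and near $\partial I$ one simply shrinks the interval to stay inside $I$.
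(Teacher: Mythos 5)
Your proposal is correct and follows essentially the same route as the paper: contradiction at an accumulation point of $\Supp(u_0''+u_0)$, a two-sided admissible perturbation built from solutions of $v''+v=\varphi\,(u_0''+u_0)$ on small subintervals (the Lachand-Robert--Peletier construction), second-order optimality, and the interpolation/Poincar\'e scaling $\|v\|_{H^s}\lesssim \eps^{1-s}|v|_{H^1}$ to defeat the $\beta,\gamma$ terms. The paper's Lemma 4.2 is exactly your scaling estimate, and its choice of $v_{n,i}$ with $v_{n,i}''+v_{n,i}=\mathbbmss{1}_{(\eps_n^i,\eps_n^{i+1})}(u_0''+u_0)$ is the ``more robust'' variant you mention for handling a possibly diffuse part of the measure.
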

\noindent
See Section \ref{sect:proof-conc} for a proof and Section \ref{sect:examples-polygon} for explicit examples.
\begin{remark}
We can even get an estimate of the number of Dirac masses in terms of $\alpha,\beta,\gamma$, see Remark \ref{rk:corner}.
\end{remark}
\begin{remark}
Theorem \ref{th:conc} remains true if \eqref{eq:coercivite} holds only for any 
$v$ such that (denoting $\mu=u_0''+u_0$):
$$\exists \,\varphi\in L^\infty(\T,\mu) \textrm{ with } v''+v=\varphi \,\mu.$$
Indeed, the proof of Theorem \ref{th:conc} uses only this kind of perturbations $v$ which preserve the convexity of the shape.
\hfill$\Box$
\end{remark}

As in Section \ref{sect:reg}, we can also handle the problem with an equality constraint as follows.
\begin{theorem}\label{th:conc+constraint} 
Let $u_{0}>0$ be any optimal solution of \eqref{eq:ming} with $j, \F_{ad}$ as in Theorem \ref{th:conc}, and the new assumptions:
\begin{eqnarray*}
j'(u_{0})\in \left( C^0(\T)\right)',\; 
&\mbox{\it and}&
m:W^{1,\infty}\to\R^d\; \textrm{is}\;  C^2\; \textrm{around}\; u_{0},\\\
&&
m'(u_{0})\in\left( C^0(\T)'\right)^d\;\ \textrm{ is\;onto,}\;\;
\|m''(u_{0})(v,v)\|\leq \beta'\|v\|^2_{H^{s}(\T)},\;\ \textrm{for some}\; \beta'\in \R.
\end{eqnarray*}
Then, if $I$ is a connected component of $\T_{in}$ (defined in \eqref{eq:inside}),
$$
u_{0}''+u_{0}\textrm{ is a finite sum of Dirac masses in }I.
$$
\end{theorem}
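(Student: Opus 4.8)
The plan is to reduce Theorem~\ref{th:conc+constraint} to Theorem~\ref{th:conc} by eliminating the equality constraint $m(u)=M_0$ through a Lagrange-multiplier argument, and then invoking the conclusion of Theorem~\ref{th:conc} for the resulting unconstrained-type problem. First I would use the surjectivity of $m'(u_0)\in (C^0(\T)')^d$ together with the fact that $j'(u_0)\in (C^0(\T))'$ to obtain a multiplier $\nu\in\R^d$ such that $j'(u_0)=\langle\nu,m'(u_0)\rangle$ \emph{modulo the first-order optimality condition for the convexity constraint}; more precisely, since $u_0$ is optimal for \eqref{eq:ming}, the first-order condition (the $W^{1,\infty}$-version, cf. Proposition~\ref{prop:order1}) gives that $j'(u_0)-\langle\nu,m'(u_0)\rangle$, restricted to the admissible cone of perturbations preserving $u''+u\ge0$ and the inclusion constraints, is the Lagrange multiplier associated to the convexity constraint, hence a nonnegative measure supported away from $\T_{in}$ on each connected component (this is exactly the structure exploited in the proof of Theorem~\ref{th:conc}).

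Next I would introduce the reduced functional $\widetilde\jmath(u):=j(u)-\langle\nu,m(u)\rangle$. By construction $\widetilde\jmath$ is $C^2$ around $u_0$, and $u_0$ satisfies the first-order optimality condition for $\min\{\widetilde\jmath(u)\,;\,u''+u\ge0,\ u\in\F_{ad}\}$. The key point is the second-order condition restricted to the tangent space of the constraint manifold $\{m=M_0\}$: for admissible perturbations $v$ one has
\begin{equation*}
\widetilde\jmath\,''(u_0)(v,v)=j''(u_0)(v,v)-\langle\nu,m''(u_0)(v,v)\rangle\le j''(u_0)(v,v)+\|\nu\|\,\beta'\|v\|^2_{H^s(\T)}.
\end{equation*}
Combining this with \eqref{eq:coercivite} for $j$ gives
\begin{equation*}
\widetilde\jmath\,''(u_0)(v,v)\le -\alpha|v|^2_{H^1(\T)}+\gamma|v|_{H^1(\T)}\|v\|_{H^s(\T)}+(\beta+\|\nu\|\beta')\|v\|^2_{H^s(\T)},
\end{equation*}
i.e. $\widetilde\jmath$ satisfies \eqref{eq:coercivite} with $\beta$ replaced by $\beta+\|\nu\|\beta'$ (note $\|\nu\|\beta'$ could be negative, which only helps). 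One then needs the second-order \emph{necessary} condition: for any admissible $v$ tangent to the constraint $m=M_0$ (to first order) and to the convexity/inclusion constraints, the full constrained problem forces $\widetilde\jmath\,''(u_0)(v,v)\ge$ (the curvature term coming from the convexity multiplier), which is precisely the inequality used in Theorem~\ref{th:conc}'s proof. Since the perturbations $v$ used there (those with $v''+v=\varphi\mu$, $\varphi\in L^\infty(\T,\mu)$, supported in $I$) automatically satisfy the inclusion constraints strictly inside $\T_{in}$, I only need to further restrict to those that are tangent to $m=M_0$; because $m'(u_0)$ is onto, this is a finite-codimension restriction, so the argument of Theorem~\ref{th:conc} — counting how many Dirac masses of $\mu=u_0''+u_0$ can lie in a connected component $I$ before the coercivity forces a contradiction — goes through after discarding at most $d$ extra ``directions.'' Hence $u_0''+u_0$ is a finite sum of Dirac masses in each connected component $I$ of $\T_{in}$.

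The main obstacle I anticipate is making the Lagrange-multiplier step rigorous in the correct functional setting: one must justify the existence of $\nu\in\R^d$ simultaneously with the nonnegative convexity multiplier, i.e. a Karush--Kuhn--Tucker theorem in $W^{1,\infty}(\T)$ with both an infinite-dimensional inequality constraint ($u''+u\ge0$, whose multiplier lives in the dual of $C^0$ or of $W^{1,\infty}$) and finitely many equality constraints. This is where the hypotheses $j'(u_0)\in(C^0(\T))'$ and $m'(u_0)\in(C^0(\T)')^d$ onto are used: they guarantee that the multiplier $\nu$ exists and that the convexity multiplier is a genuine (Radon) measure rather than a more singular functional, so that the localization ``supported outside $\T_{in}$'' makes sense and the perturbation-based counting argument of Theorem~\ref{th:conc} applies verbatim to $\widetilde\jmath$. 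The remaining steps — the second-order expansion, the absorption of the $m''$ term, and the Dirac-mass counting — are then routine once this reduction is in place, and I would simply refer to the proof of Theorem~\ref{th:conc} for the details. See Section~\ref{sect:proof-conc}.
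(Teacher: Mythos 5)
Your proposal follows essentially the same route as the paper: a Lagrange multiplier $\mu\in\R^d$ for the equality constraint, the second-order necessary condition for the Lagrangian on perturbations tangent to the constraint, absorption of the $m''$ term into the coercivity estimate \eqref{eq:coercivite} (replacing $\beta$ by $\beta+\|\mu\|\beta'$), and the same localized-perturbation counting argument from Theorem \ref{th:conc}. The only (harmless) difference is that you impose the full tangency $m'(u_0)v_n=0$, i.e.\ $d$ extra linear conditions on the test perturbations, whereas the paper only needs the single scalar condition $\mu\cdot m'(u_0)v_n=0$ and therefore uses four localized pieces $v_{n,i}$ instead of three.
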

See Section \ref{sect:proof-conc} for the proof.\\

Again, using the parametrization \eqref{eq:Omega_u}, we get the following result.
\begin{corollary}\label{cor:conc}
Let $\mathcal S_{ad}$ be a class of open sets in $\R^2$ such that $\F_{ad}:=\{u\;/\;\Om_{u}\in\mathcal S_{ad}\}$ is of the form \eqref{eq:Fad2}, 
$\Om_0$ be an optimal shape for the problem \eqref{eq:minshape1} (or \eqref{eq:minshape2} for the constrained problem), and assume that $j: u\in\F_{ad}\mapsto J(\Om_{u})$ satisfies assumptions of Theorem \ref{th:conc} (and  
$m: u\in\F_{ad}\mapsto M(\Om_{u})$ satisfies assumption in Theorem \ref{th:conc+constraint} in the case of the constrained problem).
Then:
$$
\textrm{each connected component of }(\partial\Omega_0)_{in}\textrm{ is polygonal}.
$$
\end{corollary}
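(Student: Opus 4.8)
The corollary is the geometric reformulation of Theorems \ref{th:conc} and \ref{th:conc+constraint}, so the plan is short. First I would record that, by the correspondence between \eqref{eq:minshape1}--\eqref{eq:minshape2} and \eqref{eq:min}--\eqref{eq:ming} explained in Section \ref{sect:not}, the gauge function $u_{0}$ of $\Om_{0}$ (with respect to the chosen origin $O\in\Om_{0}$) is an optimal solution of \eqref{eq:min} (resp.\ of \eqref{eq:ming}); moreover $\F_{ad}=\{u\;/\;\Om_{u}\in\mathcal S_{ad}\}$ is of the form \eqref{eq:Fad2} by hypothesis, and the assumptions made on $j$ (and, in the constrained case, on $m$) are exactly those of Theorem \ref{th:conc} (resp.\ Theorem \ref{th:conc+constraint}). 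Hence these theorems apply: for every connected component $I$ of $\T_{in}$, the nonnegative measure $\mu:=u_{0}''+u_{0}$ satisfies, on $I$, $\mu=\sum_{i=1}^{N}c_{i}\,\delta_{\theta_{i}}$ for some integer $N$, points $\theta_{i}\in I$ and weights $c_{i}>0$ (positivity of the $c_{i}$ coming from $\mu\geq0$).

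Next I would translate this analytic statement back into geometry, along the lines already recalled in Section \ref{sect:not}. Ordering the $\theta_{i}$ along $I$, on each open subinterval $\sigma$ of $I$ lying between two consecutive $\theta_{i}$'s --- or between an endpoint of $I$ and the nearest $\theta_{i}$ --- one has $u_{0}''+u_{0}=0$ in $\D'(\sigma)$, hence $u_{0}(\theta)=a_{\sigma}\cos\theta+b_{\sigma}\sin\theta$ on $\sigma$ for suitable constants $a_{\sigma},b_{\sigma}$. The polar arc $\{r=1/u_{0}(\theta),\ \theta\in\sigma\}$ is then contained in the line $\{(x,y)\;:\;a_{\sigma}x+b_{\sigma}y=1\}$, i.e.\ it is a line segment. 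Consequently the arc
$$
\Gamma_{I}:=\left\{\frac{1}{u_{0}(\theta)}(\cos\theta,\sin\theta)\;:\;\theta\in I\right\}
$$
is a finite connected union of line segments joined at the points $\frac{1}{u_{0}(\theta_{i})}(\cos\theta_{i},\sin\theta_{i})$, which are genuine (convex, because $c_{i}>0$) corners of $\Om_{0}$; that is, $\Gamma_{I}$ is polygonal.

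Finally I would conclude through the correspondence of connected components. Since $u_{0}>0$ is continuous and $O\in\Om_{0}$ (so that $\Om_{0}$ is star-shaped with respect to $O$), the map $\Phi:\theta\in\T\mapsto\frac{1}{u_{0}(\theta)}(\cos\theta,\sin\theta)$ is a continuous bijection from the circle $\T$ onto $\partial\Om_{0}$, hence a homeomorphism; by \eqref{eq:inside}--\eqref{eq:inside2} it carries $\T_{in}$ onto $(\partial\Om_{0})_{in}$, and therefore maps the connected components of $\T_{in}$ bijectively onto those of $(\partial\Om_{0})_{in}$. Each of the latter is thus some $\Gamma_{I}$, which we have just shown to be polygonal.

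The only genuinely delicate ingredient --- entirely absorbed into Theorems \ref{th:conc}--\ref{th:conc+constraint} --- is the \emph{finiteness} of the number of Dirac masses; everything above is just the elementary dictionary (recorded in Section \ref{sect:not}) relating $u_{0}''+u_{0}$ to the geometry of $\partial\Om_{0}$: its support describes the strictly convex part of the boundary, its Dirac masses the corners, and its vanishing the straight pieces. One should only check that the finitely many $\theta_{i}$ stay away from the (open) endpoints of each component $I$, so that the terminal subintervals $\sigma$ are nonempty and really produce segments --- immediate since $\mu$ has finite support on $I$ and $I$ is open.
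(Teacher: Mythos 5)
Your proof is correct and follows exactly the route the paper intends: the paper gives no explicit proof of Corollary \ref{cor:conc}, treating it as an immediate consequence of Theorems \ref{th:conc} and \ref{th:conc+constraint} together with the dictionary of Section \ref{sect:not} (Dirac masses of $u_0''+u_0$ correspond to corners, $u_0''+u_0=0$ to straight segments). You have simply, and correctly, written out the elementary details of that translation, including the homeomorphism between $\T_{in}$ and $(\partial\Omega_0)_{in}$ and the explicit solution $u_0=a\cos\theta+b\sin\theta$ between consecutive atoms.
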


\begin{remark}\label{rem:polyh}
When one uses the parametrization of convex sets by the gauge function $u$, $\Om_{u}$ is a polygon if and only if $u''+u$ is a sum of Dirac masses. When parametrizing $\Omega$ with the support function as in Section \ref{sect:support}, one has the same characterization. Therefore, the results of this section hold if we work with the optimization problems as in Section \ref{sect:support}. 

\end{remark}

\section{Shape functionals containing a local-convex term}\label{sect:W2p}

In this section, we give the proof of the results in Section \ref{sect:reg}, that is to say regularity results for solutions of \eqref{eq:min} or \eqref{eq:ming}. Using the parametrization \eqref{eq:Omega_u}, since the regularity of a shape and of its gauge functions are the same, we consider several applications of regularity for optimal shapes to classical examples of energies. We conclude with a few remarks about the application of our results when we use another parametrization of convex bodies, namely the support function. In that case, we get the regularity of the support function, which does not imply the regularity of the corresponding shape, but only 
the fact that this one is strictly convex.

\subsection{Proof of Theorem \ref{th:reg} and \ref{th:reg+constraint}}\label{sect:proof-reg}

\subsubsection*{First order optimality condition:}

A first optimality condition for the problem \eqref{eq:min} is stated in \cite[Proposition 3.1, 3.2]{LN} when $j$ is defined and differentiable in the Sobolev Hilbert space $H^1(\T)$. We give here an adaptation to state this result in $W^{1,\infty}$ instead (which is important for our applications involving a PDE, since the shape functionals are known to be differentiable for Lipschitz deformations only).
\begin{proposition}\label{prop:order1}
Let $u_0>0$ be a solution of (\ref{eq:min}) with $j:W^{1,\infty}(\T)\to\R$ of class $ C^1$ and such that $j'(u_{0})\in  C^0(\T)'$. Then there exists $\zeta_0 \in W^{1,\infty}(\T)$, such that 
\begin{equation}\label{eq:euler1}
\left\{\begin{array}{l}
\zeta_0\geq0\;\textrm{on}\;\T, \quad \zeta_0=0\;\textrm{on}\;\Supp(u_{0}''+u_{0}), \quad\mbox{ and }\\[2mm]
\forall\; v\in W^{1,\infty}(\T_{in}),\quad
j'(u_0)v=
\displaystyle{
\langle\zeta_0+\zeta_0'',v\rangle_{(W^{1,\infty})'\times W^{1,\infty}}}:=\int_{\T}\zeta_{0}v-\zeta_{0}'v'.
\end{array}\right.\end{equation}
\end{proposition}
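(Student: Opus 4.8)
The plan is to derive the first-order optimality condition \eqref{eq:euler1} by the usual convex-analysis argument: the convexity constraint $u''+u\geq 0$ defines a convex cone $\mathcal{K}$ in $W^{1,\infty}(\T)$, so the set of admissible directions at $u_0$ is the tangent cone to $\mathcal K\cap\mathcal F_{ad}$, and the optimality of $u_0$ forces $j'(u_0)$ to lie in the polar of this cone. The subtlety — and the reason the statement is phrased in terms of a function $\zeta_0$ rather than an abstract multiplier — is to identify this polar cone concretely as $\{\zeta+\zeta'' : \zeta\in W^{1,\infty}(\T),\ \zeta\geq 0,\ \zeta=0\text{ on }\Supp(u_0''+u_0)\}$.

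First I would set up the localization: since we only test against $v\in W^{1,\infty}(\T_{in})$ (i.e. $v$ supported away from where the inclusion constraints are active), the constraint $u\in\mathcal F_{ad}$ is locally inactive, and the only remaining constraint is convexity. For such $v$, and for $t>0$ small, one checks that the admissible perturbations are exactly those $v$ for which $u_0+tv$ stays in the cone; by a standard argument (cf.\ \cite{LN}), the relevant first-order information is that $j'(u_0)v\geq 0$ for every $v\in W^{1,\infty}(\T_{in})$ with $v''+v\geq 0$ on a neighborhood of $\Supp(u_0''+u_0)$ — more precisely for $v$ in the tangent cone, which since $\mathcal K$ is a cone contains $\mathcal K$ itself and the line $\R(u_0)$. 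Writing $\mu := u_0''+u_0\geq 0$, the cone of admissible directions (intersected with $W^{1,\infty}(\T_{in})$) can be described via the measure $\mu$: $v$ is admissible iff $v''+v$ is $\geq$ a measure absolutely continuous w.r.t.\ $\mu$ in a suitable sense, so $j'(u_0)$ vanishes on the linear span and is $\geq 0$ on the cone.

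The core step is then a duality/Hahn–Banach argument. Because $j'(u_0)\in C^0(\T)'$ is a measure, I would solve the ODE $\zeta_0''+\zeta_0 = j'(u_0) - (\text{correction supported on }\Supp\mu)$ on each connected component of $\T_{in}$, i.e.\ invert the operator $\zeta\mapsto\zeta''+\zeta$ against the right-hand side using the Green's function of $d^2/d\theta^2 + 1$ with periodic data (the $2\pi$-period is compatible because we can add a constant/resonant-mode adjustment, or work componentwise on $\T_{in}$ with appropriate boundary behavior). The sign conditions $\zeta_0\geq 0$ and $\zeta_0 = 0$ on $\Supp\mu$ are precisely the complementary-slackness / KKT conditions dual to $\mu\geq 0$ and $u''+u\geq 0$, and they come out of the variational inequality $j'(u_0)v\geq 0$ for admissible $v$: testing with $v$ of one sign forces $\zeta_0\geq 0$, and testing with perturbations that are free on $\Supp\mu$ forces $\zeta_0$ to vanish there. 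Finally, regularity: $\zeta_0''+\zeta_0$ equals a measure minus a term supported on $\Supp\mu$; since $\zeta_0$ vanishes on $\Supp\mu$, on $\T_{in}$ we get $\zeta_0$ solving an ODE with measure right-hand side, hence $\zeta_0\in W^{1,\infty}$ (its derivative has bounded variation, in particular is bounded), which gives the claimed $W^{1,\infty}(\T)$ regularity of $\zeta_0$.

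The main obstacle I anticipate is making the duality rigorous in the non-reflexive space $W^{1,\infty}$ rather than in the Hilbert space $H^1$ where \cite{LN} works: one cannot simply project, so the identification of the polar cone must be done by hand, and one must be careful that the multiplier $\zeta_0$ produced lands in $W^{1,\infty}(\T)$ and not merely in some larger space — this is exactly where the hypothesis $j'(u_0)\in C^0(\T)'$ (a measure, not just an element of $(W^{1,\infty})'$) is used, since the Green's function of $d^2/d\theta^2+1$ maps measures into $W^{1,\infty}$. A secondary technical point is the behavior near $\partial\T_{in}$ (where the inclusion constraint becomes active): restricting test functions to $W^{1,\infty}(\T_{in})$ sidesteps it, but one should check the construction of $\zeta_0$ is consistent across components and yields a globally $2\pi$-periodic $W^{1,\infty}$ function, extending it (e.g.\ by $0$, or as dictated by the equation) outside $\T_{in}$ where it is not otherwise constrained.
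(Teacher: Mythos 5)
There is a genuine gap: the heart of the proposition is the \emph{construction} of a multiplier $\zeta_0$ that simultaneously (a) represents $j'(u_0)$ as $\zeta_0+\zeta_0''$, (b) is nonnegative, and (c) vanishes on $\Supp(u_0''+u_0)$, and your proposal asserts rather than proves this. Two specific problems. First, the mechanism you offer for the sign condition is wrong: the admissible directions for the convexity constraint are (roughly) the $v$ with $v''+v\geq 0$, not the signed $v$, so ``testing with $v$ of one sign'' does not force $\zeta_0\geq 0$. What the variational inequality actually gives is $\int_\T \zeta_0\, d\sigma\geq 0$ for nonnegative measures $\sigma$ of the form $v''+v$, i.e.\ those with vanishing Fourier modes at $n=\pm1$; deducing from this that $\zeta_0+a\cos+b\sin\geq 0$ for a suitable choice of $(a,b)$ which \emph{also} preserves the vanishing on $\Supp(u_0''+u_0)$ is a genuinely nontrivial geometric argument (it is the content of Propositions 3.1--3.2 of \cite{LN}, which the paper invokes at the end). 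Second, your ``correction supported on $\Supp\mu$'' in the ODE is a phantom: in the actual result the equation is $\zeta_0''+\zeta_0=j'(u_0)$ with no correction, and the vanishing on $\Supp\mu$ is a complementary-slackness property of $\zeta_0$ itself, obtained from $j'(u_0)(u_0)=0$ together with $\zeta_0\geq0$ and $\int_\T\zeta_0\,d(u_0''+u_0)=0$. Relatedly, solvability of $\zeta_0''+\zeta_0=j'(u_0)$ on $\T$ requires the compatibility conditions $j'(u_0)(\cos)=j'(u_0)(\sin)=0$ (the resonant modes), which you mention only in passing and which must themselves be extracted from optimality (both $\pm\cos$ and $\pm\sin$ are admissible directions since they are annihilated by $d^2/d\theta^2+1$).

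For comparison, the paper does not define $\zeta_0$ by inverting the operator against $j'(u_0)$. It first produces an abstract multiplier $l_0$ on the closed subspace $Y=\{f\in (W^{1,\infty})'\,:\, f(\cos)=f(\sin)=0\}={\rm Im}(v\mapsto v''+v)$ by the Hahn--Banach strategy of \cite{LN}; a priori $l_0$ lives in the huge space $Y'$. The quantitative step is then the a priori estimate $\|v\|_{W^{1,\infty}}\leq C\|v''+v\|_{L^1}$ for $v$ orthogonal to $\cos,\sin$ (proved by Fourier series), which shows $l_0$ extends to a continuous functional on $L^1$, hence is an $L^\infty$ function $\zeta_0$; only then does the hypothesis $j'(u_0)\in C^0(\T)'$ upgrade $\zeta_0$ to $W^{1,\infty}$ via $\zeta_0''+\zeta_0=j'(u_0)$ (this last step agrees with your Green's-function remark). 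Your plan of bypassing the abstract multiplier by a direct definition could in principle be made to work, but as written it leaves unproved exactly the points where the difficulty lies.
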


\begin{remark}
Without any assumption on $j'(u_{0})$, we would a priori get a Lagrange multiplier $\zeta_{0}\in L^\infty(\T)$ (see the proof below). The non-continuity of $\zeta_{0}$ may lead to some difficulties, especially to state that $\zeta_{0}=0$ on $\Supp(u''_{0}+u_{0})$. Though a restriction, the assumption $j'(u_{0})\in C^0(\T)'$ will be satisfied in all of our applications.
\end{remark}

\begin{proof}
We set
$$g:v\in W^{1,\infty}\mapsto v''+v \in (W^{1,\infty})'\textrm{ in the sense that }\langle v''+v,\varphi\rangle_{{W^{1,\infty}}'\times W^{1,\infty}}=\int_{\T}v\varphi-v'\varphi',$$
and we consider 
$Y:={\rm Im}(g)=\{f\in W^{1,\infty}(\T)',\; 
\langle f, \cos\rangle_{(W^{1,\infty})'\times W^{1,\infty}}=
\langle f, \sin\rangle_{(W^{1,\infty})'\times W^{1,\infty}}=0\}$, which is a closed subspace of 
$(W^{1,\infty}(\T))'$.

Applying the same strategy as in \cite{LN}, one gets $l_{0}\in Y'$ such that
$l_0(g(u_0))=0$ and
$$
\forall f\in Y,\; f\geq0\Rightarrow l_{0}(f)\geq 0,\quad
\textrm{ and }\quad\forall v\in {W^{1,\infty}},\;
\langle j'(u_{0}),v\rangle_{(W^{1,\infty})'\times {W^{1,\infty}}}=\langle l_{0},v''+v\rangle_{Y'\times Y}.$$

We restrict ourselves to $v\in \mathcal{D}(\T):= C^\infty(\T)$, and consider 
$$\zeta_{0}:f\in\D(\T)\cap Y\mapsto \langle \zeta_{0}, f\rangle_{\D'\times\D}:=\langle l_{0}, f\rangle_{Y'\times Y}.
$$
Our aim is to prove that $\zeta_{0}$ can be extended to a continuous linear form on $L^1(\T)$. 
First, for $f\in \D(\T)\cap Y=\{f\in \D(\T),\; \int_\T f\sin = \int_T f\cos =0\}$
we choose the unique $v\in W^{2,1}(\T)$ such that $\{\int_\T v\sin = \int_\T v\cos =0\}$ and 
$v''+v=f$ in $\T$.
Then there exists $C<\infty$ independant of $v$ or $f$ such that 
\begin{equation}\label{eq:estimate}
\|v\|_{W^{1,\infty}(\T)}\leq C\|f\|_{L^1(\T)}.
\end{equation}

Indeed, we first get an $L^\infty$-estimate using Fourier series: if $f=\sum_{n\in\Z}\widehat{f}(n)e_n$ with $e_{n}(\theta)=e^{in\theta}$ and $\widehat{f}(n)=\int_{\T}f(\theta)e^{-in\theta}\frac{d\theta}{2\pi}$,  then $v=\sum_{|n|\neq 1}\frac{1}{1-n^2}\widehat{f}(n)e_{n}$, and therefore
$$\|v\|_{L^\infty}\leq\left(\sum_{|n|\neq 1}\frac{1}{|1-n^2|}\right)\max_{n} |\widehat{f}(n)|\leq C\|f\|_{L^1},$$
with $C<\infty$. Then we get a $W^{1,\infty}$-estimate by choosing $\theta_{0}$ such that $v'(\theta_{0})=0$ (which is always possible, thanks to regularity and periodicity of $v$), and getting from $v''+v=f$ that
$$|v'(\theta)|=\left|-\int_{\theta_{0}}^\theta (f(s)-v(s))ds\right|\leq 2\pi\left(\|v\|_{L^\infty}+\|f\|_{L^1}\right),$$
which concludes the proof of the estimate \eqref{eq:estimate}.

Therefore, we can write ($C$ may define different universal constants)
\begin{equation}\label{eq:L1}
\forall f\in Y\cap\D(\T),\quad|\langle \zeta_{0}, f\rangle_{\D'\times\D}|=|\langle l, v''+v\rangle_{Y'\times Y}|=|\langle j'(u_{0}),v\rangle_{(W^{1,\infty})'\times {W^{1,\infty}}}|\leq C\|v\|_{W^{1,\infty}}\leq C\|f\|_{L^1}.
\end{equation}
We now extend $\zeta_{0}$ on $\D(\T)$ by 
$$\forall f\in \D(\T), \quad\langle\zeta_{0},f\rangle_{\D'\times\D}=\langle\zeta_{0},f-\widehat{f}(1)e_1-\widehat{f}(-1)e_{-1}\rangle_{\D'\times\D}.$$
Then, applying \eqref{eq:L1} to $f-\widehat{f}(1)e_1-\widehat{f}(-1)e_{-1}$, we get
$$\forall f\in\D(\T),\quad |\langle \zeta_{0}, f\rangle_{\D'\times\D}|\leq C\|f-\widehat{f}(1)e_1-\widehat{f}(-1)e_{-1}\|_{L^1}\leq C\|f\|_{L^1},$$
and therefore by density, we extend $\zeta_{0}$ to a  continuous linear form in $L^1$, which can be identified with $\zeta_{0}\in L^\infty$.
Moreover, in the sense of distributions:
$$\langle \zeta_{0}, v''+v\rangle_{\D'\times\D}=\langle j'(u_{0}),v\rangle_{\D'\times \D}\textrm{, that is to say }\zeta_{0}''+\zeta_{0}=j'(u_{0}).$$
From the hypothesis for $j'(u_0)$ it follows $\zeta_{0}''+\zeta_{0}\in ( C^0(\T))'$ which implies $\zeta_{0} \in W^{1,\infty}(\T).$
Using the continuity of $\zeta_{0}$ and the fact $j'(u_0)(u_0)=0$ we get $\int_{\T} \zeta_{0} d(u_{0}''+u_{0})=0$ by a density argument. Therefore,
the rest of the proof stays as in \cite{LN}, namely we prove that we can add a combination of $\cos$ and $\sin$ to $\zeta_{0}$ so that $\zeta_{0}\geq 0$.
\end{proof}

\subsubsection*{Proof of Theorem \ref{th:reg}.}

Applying the previous proposition, and using the hypotheses on the functional $j$, we get:
\begin{eqnarray*}
\forall v\in  C^\infty_{c}(\T_{in}),\;\; j'(u_0)v
=
r'(u)v+\int_{\T}G_u(\theta,u_0,u_0')v+G_q(\theta,u_0,u_0')v'
&=&
\langle\zeta_0+\zeta_0'',v\rangle_{(W^{1,\infty}(\T))'\times W^{1,\infty}(\T)}.
\end{eqnarray*}
To integrate by part in this formula, since $u_{0}'$ is only in $BV(\T)$, we may look in \cite{V} (see also \cite{ambrosio}) to get:
\begin{equation}\label{eq:opti1}
r'(u_{0})+G_u(\theta,u_0,u_0')-
G_{\theta q}(\theta,u_0,u_0')-G_{uq}(\theta,u_0,u_0')u_0' -
u_{0}''\widetilde{G_{qq}}(\theta,u_0,u_0')=\zeta_0+\zeta_0'' \textrm{ in }\mathcal{D}'(\T_{in}).
\end{equation}
where $\widetilde{G_{qq}}(\theta,u_0,u_0')=\int_0^1G_{qq}(\theta,u_0(\theta),(1-t)u_0'(\theta^+) + tu_0'(\theta^-))dt$. For simplicity, we will drop the indication of the dependence in $(\theta,u_0,u_0')$ and write more simply 
\begin{equation}\label{eq:opti2}
r'(u_{0})+G_u-
G_{\theta q}-G_{uq}u_0' -
u_{0}''\widetilde{G_{qq}}=\zeta_0+\zeta_0'' \textrm{ in }\mathcal{D}'(\T_{in}).
\end{equation}
Equality \eqref{eq:opti2} implies that $\zeta_{0}''$ is a Radon measure, and also that the singular parts of the measures in the two sides of \eqref{eq:opti2} are equal. To study the sign of these measures, we will use the following lemma.

\begin{lemma}\label{l:zeta''} 
The measure $\zeta_0''$ satisfies:
$\zeta_0''\geq 0$ on $[\zeta_0=0]$.
\end{lemma}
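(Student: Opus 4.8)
\textbf{Proof plan for Lemma \ref{l:zeta''}.}

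The plan is to exploit the first-order optimality condition, specifically the fact that $\zeta_0\geq 0$ on all of $\T$ together with the local Euler equation satisfied by $\zeta_0$ away from the inclusion constraints, to show that $\zeta_0''$ is nonnegative precisely where $\zeta_0$ vanishes. The guiding intuition is the elementary fact that a nonnegative function that touches zero must be locally convex at its minima: at an interior point where $\zeta_0$ attains the value $0$, which is its global minimum, the distributional second derivative cannot have a negative singular part, and its absolutely continuous part must be $\geq 0$ wherever it is defined. I would make this precise using the structure of $\zeta_0\in W^{1,\infty}(\T)$, so $\zeta_0'$ is $BV$ and $\zeta_0''$ is a Radon measure.

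First I would decompose the closed set $[\zeta_0=0]$ (closed by continuity of $\zeta_0$) into its topological interior and its boundary relative to $\T$. On the interior of $[\zeta_0=0]$, the function $\zeta_0$ is identically zero on an open set, so $\zeta_0''=0$ there, which trivially gives $\zeta_0''\geq 0$ on that piece. The remaining work is on $\partial[\zeta_0=0]$, i.e.\ the set of points where $\zeta_0$ vanishes but is not identically zero in any neighborhood; each such point is a point of (global) minimum of $\zeta_0$. Here I would argue pointwise on the Lebesgue decomposition of the measure $\zeta_0''=f\,d\theta+\zeta_0''^{\,s}$: on the absolutely continuous part, at a density point $\theta^*\in[\zeta_0=0]$ that is also a point of approximate continuity of $f$, one has a Taylor-type expansion and, since $\theta^*$ is a minimizer of the nonnegative function $\zeta_0$, the second-order term forces $f(\theta^*)\geq 0$; thus $f\geq 0$ a.e.\ on $[\zeta_0=0]$. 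For the singular part, I would use that $\zeta_0'$ is $BV$ and that at a point of $[\zeta_0=0]$ which is a local minimum the left derivative $\zeta_0'(\theta^{*-})$ is $\leq 0$ and the right derivative $\zeta_0'(\theta^{*+})$ is $\geq 0$, so any jump of $\zeta_0'$ at such a point is a nonnegative jump; together with the fact that the singular part of $\zeta_0''$ is concentrated on the jump set of $\zeta_0'$ (and, on $[\zeta_0=0]$, only jump points that are minima can occur), this yields $\zeta_0''^{\,s}\geq 0$ on $[\zeta_0=0]$. Combining the two contributions gives $\zeta_0''\geq 0$ as a measure restricted to $[\zeta_0=0]$.

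The main subtlety — and the step I expect to require the most care — is handling the \emph{boundary} points of $[\zeta_0=0]$ correctly as a measure statement rather than merely pointwise: one must make sure that a Dirac mass in $\zeta_0''$ sitting at a point $\theta^*\in\partial[\zeta_0=0]$ indeed has nonnegative weight, and that there is no negative singular mass hiding on the (possibly small) set $\partial[\zeta_0=0]$. The clean way to see this is that on $\T_{in}$ the measure $\zeta_0''+\zeta_0$ equals $j'(u_0)$, which by \eqref{eq:opti2} has a singular part equal to $-u_0''\,\widetilde{G_{qq}}$ restricted to the singular set; but $u_0''+u_0\geq 0$ forces the singular part of $u_0''$ to be a nonnegative measure, and $\widetilde{G_{qq}}>0$ by hypothesis (iii), so $-u_0''\widetilde{G_{qq}}$ has a \emph{nonpositive} singular part there — hence so does $\zeta_0''$ on $\T_{in}$. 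Away from $\T_{in}$ one still has $\zeta_0=0$ by the first-order condition only on $\Supp(u_0''+u_0)$, so one restricts attention appropriately. Putting these sign constraints together with the local-minimum argument above pins down the sign of $\zeta_0''$ on $[\zeta_0=0]$; the remaining estimate is the routine verification that a nonnegative Lipschitz periodic function has nonnegative second-derivative measure at its zero set, which I would carry out by the density-point argument sketched above.
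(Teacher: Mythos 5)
There is a genuine gap in your treatment of the singular part of $\zeta_0''$. You assert that ``the singular part of $\zeta_0''$ is concentrated on the jump set of $\zeta_0'$'', but this is false for a general $BV$ function: $\zeta_0'$ is only known to be $BV$, so the measure $\zeta_0''=D\zeta_0'$ splits into an absolutely continuous part, an atomic (jump) part, \emph{and} a Cantor part; the Cantor part is singular, carries no atoms, and may charge uncountable Lebesgue-null subsets of $[\zeta_0=0]$. Your left/right-derivative argument at minima controls only the atoms and says nothing about the Cantor part, so $\zeta_0''^{\,s}\geq 0$ on $[\zeta_0=0]$ does not follow. The rescue attempted in your last paragraph does not close the hole: identifying singular parts in \eqref{eq:opti2} gives that the singular part of $\zeta_0''$ equals $-\mu_s\widetilde{G_{qq}}\leq 0$ on $\T_{in}$, i.e.\ an inequality in the \emph{opposite} direction to the one the lemma claims, and only on $\T_{in}$, whereas the lemma concerns all of $[\zeta_0=0]$ and is precisely the ingredient the paper later combines with $\mu_s\geq0$ to conclude $\mu_s=n_s=0$; a nonpositivity bound cannot yield the desired nonnegativity (at best it gives ``Cantor part $\leq -$ jump part $\leq 0$''). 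A smaller but real defect concerns the absolutely continuous part: approximate continuity of $f$ at a density point of $[\zeta_0=0]$ does not by itself give a second-order Taylor expansion of $\zeta_0$ there, since the uncontrolled singular part of $\zeta_0''$ near that point can dominate the second-order behaviour; the true statement ($f=0$ a.e.\ on $[\zeta_0=0]$, coming from $\zeta_0'=0$ a.e.\ there) needs a different justification.

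The paper's proof sidesteps all of this and uses none of the optimality structure: it is a statement about an arbitrary nonnegative function in $W^{1,\infty}(\T)$. Test $\zeta_0''$ against $\varphi\,p_n(\zeta_0)$ with $\varphi\geq0$ smooth and $p_n(r)=\max(1-nr,0)$; integration by parts gives
\[
\int\varphi\,p_n(\zeta_0)\,d(\zeta_0'')=-\int\zeta_0'\varphi'\,p_n(\zeta_0)-\int\varphi\,p_n'(\zeta_0)\,(\zeta_0')^2\;\geq\;-\int\zeta_0'\varphi'\,p_n(\zeta_0),
\]
since $p_n'\leq0$ and $\varphi\geq0$. Letting $n\to\infty$ (dominated convergence with respect to the total variation of $\zeta_0''$ on the left, and $\zeta_0'=0$ a.e.\ on $[\zeta_0=0]$ on the right) yields $\int_{[\zeta_0=0]}\varphi\,d(\zeta_0'')\geq0$ for every nonnegative $\varphi$, which is the claim. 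This regularization handles all three pieces of the Lebesgue decomposition simultaneously; I would replace your pointwise decomposition by it.
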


\noindent
{\bf Proof of Lemma \ref{l:zeta''}.} Let $\varphi \in {\cal C}_0^\infty(\R), \varphi\geq 0$ and let $p_n:\R_{+}\to \R_{+}$ be defined by
$$\forall r\in [0,1/n],\;p_n(r)= 1-nr;\;\forall r\in [1/n,+\infty),\;p_n(r)=0.$$
Recall that $\zeta_0\in W^{1,\infty}(\T)$ and $\zeta_0\geq 0$. Then
$$\int\varphi p_n(\zeta_0)d(\zeta_0'')=-\left(\int \zeta_0'\varphi' p_n(\zeta_0)+\varphi p_n'(\zeta_0){\zeta_0'}^2\right)\geq -\int\zeta_0'\varphi'p_n(\zeta_0).$$
Letting $n$ tend to $+\infty$ leads to
$$\int_{[\zeta_0=0]}\varphi d(\zeta_0'')\geq -\int_{[\zeta_0=0]}\zeta_0' \varphi'=0,$$
the last integral being equal to $0$ thanks to the known property $\zeta_0'=0$ a.e. on $[\zeta_0=0]$.
\hfill\qed

\noindent
{\bf End of the proof of Theorem \ref{th:reg}:}\\[3mm]
Denote $K:=\Supp(u_0''+u_0)$. Recall that $\zeta_0=0$ on $K$ by Proposition \ref{prop:order1}. By Lemma \ref{l:zeta''}, $\zeta_0''\geq 0$ on $K$. Let $u_{0}''=\mu_{ac}+\mu_{s}$ and ${\zeta_{0}''}=n_{ac}+n_{s}$ be the Radon-Nikodym decompositions of the measures $u_0'', \zeta_0''$ in their absolutely
continuous and singular parts. Note that:  [$u_0''+u_0\geq 0\Rightarrow \mu_s\geq 0$] and $n_s\geq 0$ on $K$.  

Identifying the singular parts in the identity (\ref{eq:opti2}), and using that $r'(u_0), G_u,  G_{\theta q}, G_{uq}u'_0, u_0\widetilde{G_{qq}}$ are at least $L^p$-functions, we are led  to $-\mu_s\widetilde{G_{qq}}=n_s$ in $\T_{in}$.
Since $\widetilde{G_{qq}}>0, \mu_s\geq 0, n_s\geq 0$ on $K\supset \Supp(\mu_s)$, we deduce $\mu_{s}=0=n_s$ in $\T_{in}$. Thus, $u_0\in W^{2,1}(\T_{in})$ and $u'_0$ is absolutely continuous on $\T_{in}$. In particular, $\widetilde{G_{qq}}=G_{qq}$ on $\T_{in}$.

We can now obtain higher regularity, using again the multiplier $\zeta_{0}''$.
Indeed, 
on one hand, we deduce from Lemma \ref{l:zeta''}, from (\ref{eq:opti2}) and from the inequality $-u_0''G_{qq}\leq u_0G_{qq}$, that, on the set $\T_{in}\cap K$
$$
0
\leq 
\zeta_{0}''
\leq 
r'(u_{0})+G_u-G_{\theta q}-G_{uq}u_0' +u_{0}G_{qq} \in L^p(\T).
$$
Thus, $\zeta_0''\in L^p(\T_{in}\cap K)$. Going back to (\ref{eq:opti2}) and using that $\widetilde{G_{qq}}=G_{qq}$ is bounded from below on the compact set $\T\times u_0(\T)\times Conv(u'_0(\T))$, we deduce $u_0''\in L^p(\T_{in}\cap K)$.

On the other hand, 
in the open set $\T_{in}\setminus K$, we have $u_0''+u_0=0$ so that $u_{0}''\in L^\infty(\T_{in}\setminus K)$. As a conclusion $u_{0}''\in L^p(\T_{in})$.
\qed

\subsubsection*{Proof of Theorem \ref{th:reg+constraint}.}
Optimality conditions are written with the Lagrangian (since $m'(u_{0})$ is onto, see also \cite[Proposition 2.3.3]{LN}):
$$
\forall v\in C^\infty_{c}(\T_{in}),\quad j'(u_0)v+\mu\cdot (m'(u_{0})v)
=
\displaystyle{
\langle\zeta_0+\zeta_0'',v\rangle_{(W^{1,\infty})'\times W^{1,\infty}}},$$
for some $\mu\in\R^d$.
The regularity of $m'(u_{0})$ implies that the strategy used in the proof of Theorem \ref{th:reg} remains valid.
\hfill\qed

\subsection{Examples}\label{sect:examples-smooth}
In this section, we apply Corollary \ref{cor:reg} to a number of classical energy functionals. For the proof of 
the differentiability of the shape functionals see Section  \ref{sect:derivative1}. 
We start by reminding some classical PDE functionals that we use in our examples.

\subsubsection*{Dirichlet energy - Torsional rigidity}

For $\Om$ an open bounded set in $\R^2$, we consider the solution of the following PDE, in a variational sense:
\begin{equation}\label{eq:dirichlet}
U_\Om\in H^1_0(\Om),\quad -\Delta U_\Om =f\;\;\textrm{in}\;\Om,
\end{equation}
and we define the Dirichlet energy of $\Om$ by
\begin{eqnarray*}
E_f(\Om)
&:=&
\int_\Om\left(\frac{1}{2}|\nabla U_\Om|^2-f\,U_\Om\right)=\min\left\{\int_{\Omega}\left(\frac{1}{2}|\nabla U|^2-fU\right),\; U\in H^1_0(\Omega)\right\}	\\
&=&
-\frac{1}{2}\int_\Om|\nabla U_\Om|^2=-\frac{1}{2}\int_\Omega U_\Omega f.
\end{eqnarray*}
About the regularity of the state function, we are going to use the following classical result (see \cite{kadlec-1},
\cite{grisvard-1}).
\begin{lemma}\label{l:U->H2}
Let $\Om$ be convex, $f\in L^p_{loc}(\mathbb R^2)$ with $p>2$, and $U_\Om$ be the solution of (\ref{eq:dirichlet}). 
Then $U_\Om\in W^{1,\infty}(\Omega)\cap H^2(\Omega)$.
\end{lemma}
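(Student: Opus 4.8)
The plan is to treat the two conclusions separately, since they rely on quite different ingredients: the $H^2$ regularity is the content of Kadlec's theorem and needs only $f\in L^2(\Om)$ (which holds here because $\Om$ is bounded, being $\Om_{u}$ with $u$ bounded from below, and $p>2$), whereas the $W^{1,\infty}$ regularity genuinely uses $p>2$ via potential theory.

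For the $H^2$ part I would first reduce to a smooth domain by exhausting $\Om$ with smooth bounded convex open sets $\Om_k$ (it is slightly more convenient to take $\Om\subset\Om_k\downarrow\Om$, so that the approximate solutions are defined on $\Om$). On $\Om_k$, where everything is smooth, the key tool is the classical Rellich--Pohozaev (Kadlec) identity: for $U_k\in H^2\cap H^1_0(\Om_k)$ solving $-\Delta U_k=f$,
\[
\int_{\Om_k}|D^2U_k|^2=\int_{\Om_k}(\Delta U_k)^2-\int_{\partial\Om_k}\kappa_k\,(\partial_n U_k)^2\,d\sigma ,
\]
since $U_k=0$ on $\partial\Om_k$ forces $\nabla U_k$ to be normal there, and $\kappa_k\ge 0$ by convexity of $\Om_k$. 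Hence $\|D^2U_k\|_{L^2(\Om_k)}\le\|f\|_{L^2(\Om_k)}$, and the right-hand side is bounded uniformly in $k$ because all the $\Om_k$ sit in a fixed ball and $f\in L^2_{loc}(\R^2)$. Combined with the energy estimate, this gives a uniform $H^2$ bound on $U_k|_\Om$; extracting a weak $H^2$ limit and identifying it with $U_\Om$ (uniqueness of the variational solution, the homogeneous boundary condition passing to the limit) yields $U_\Om\in H^2(\Om)$ with $\|D^2U_\Om\|_{L^2}\le\|f\|_{L^2}$.

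For the $W^{1,\infty}$ part I would split $U_\Om=N+h$, where $N:=\Gamma*(f\,\chi_{\Om})$ is the Newtonian potential of the restriction of $f$ to $\Om$ ($\Gamma$ the fundamental solution of $-\Delta$ in $\R^2$), so that $-\Delta N=f$ in $\Om$ and $h:=U_\Om-N$ is harmonic in $\Om$. By the Calderón--Zygmund estimate $D^2N\in L^p(\R^2)$, and since $p>2$ Morrey's embedding gives $N\in C^{1,1-2/p}_{loc}(\R^2)$; in particular $N\in W^{1,\infty}(\overline\Om)$ (as $\Om$ is bounded) and its trace $g:=-N|_{\partial\Om}$ is Lipschitz on $\partial\Om$. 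It then remains to show that the harmonic function $h$ — which satisfies $\|h\|_{L^\infty(\Om)}\le\|g\|_{L^\infty(\partial\Om)}$ by the maximum principle, the boundary values being attained continuously since $U_\Om\in C^0(\overline\Om)$ and $N\in C^0(\overline\Om)$ — belongs to $W^{1,\infty}(\Om)$. For this I would combine the interior gradient estimate $|\nabla h(x)|\le C\|h\|_{L^\infty(\Om)}/\mathrm{dist}(x,\partial\Om)$ with a boundary Lipschitz bound $|h(x)-g(x_0)|\le C\|g\|_{C^{0,1}}|x-x_0|$ for $x\in\Om$ near $x_0\in\partial\Om$, obtained from sub/super-harmonic barriers built on the supporting half-plane at $x_0$ (with a local reflection argument across flat portions of $\partial\Om$), and then a covering/scaling argument to turn these local bounds into a uniform bound on $\nabla h$ over $\Om$. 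Alternatively, this last step is exactly the boundary regularity statement in Grisvard's monograph and can simply be quoted. Then $U_\Om=N+h\in W^{1,\infty}(\Om)$.

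I expect the main obstacle to be precisely this uniform up-to-the-boundary Lipschitz estimate for $h$: a general bounded convex set may have a complicated boundary combining flat pieces, corners, and strictly convex arcs with vanishing curvature, and the naive quadratic barrier on a single supporting line degenerates exactly at points lying on a flat portion of $\partial\Om$, so the argument requires either a careful case analysis according to the local shape of $\partial\Om$ or the systematic smooth-approximation device underlying the Kadlec--Grisvard theory. Everything else (the curvature identity, the potential-theoretic properties of $N$, and the two embeddings) is standard.
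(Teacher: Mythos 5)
First, a point of comparison: the paper does not prove this lemma at all — it is quoted as a classical result with references to Kadlec and Grisvard — so you are supplying a proof where the authors supply a citation. Your $H^2$ argument is exactly the classical Kadlec proof (smooth convex exhaustion, the Rellich identity $\int_{\Om}|D^2U|^2=\int_{\Om}(\Delta U)^2-\int_{\partial\Om}\kappa(\partial_n U)^2$ with $\kappa\geq 0$ by convexity, uniform $H^2$ bound, weak limit), and it is correct modulo the routine verification that your exterior approximations $\Om_k\downarrow\Om$ produce the solution on $\Om$ in the limit (convex domains are stable for the Dirichlet problem; approximating from the inside and using Mosco convergence of $H^1_0(\Om_k)$ avoids even this).

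The $W^{1,\infty}$ half, however, contains a step that is false as stated. The decomposition $U_\Om=N+h$ with $N=\Gamma*(f\chi_\Om)\in C^{1,1-2/p}_{loc}$ is the right start, but the claimed boundary bound $|h(x)-g(x_0)|\leq C\|g\|_{C^{0,1}}|x-x_0|$ cannot hold with a constant depending only on the Lipschitz norm of $g=-N|_{\partial\Om}$: take for $\Om$ the upper half-disk and for $g$ the restriction of the globally Lipschitz function $(x,y)\mapsto|x|$; the harmonic extension satisfies $h(0,y)\sim\frac{2}{\pi}\,y\log(1/y)$, so $\nabla h$ blows up logarithmically at an interior point of a flat boundary segment, and no barrier built on the supporting half-plane can rule this out. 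What rescues the lemma is that $g$ is not an arbitrary Lipschitz function but the trace of a $C^{1,\alpha}(\R^2)$ function: subtracting the affine function $N(x_0)+\nabla N(x_0)\cdot(x-x_0)$ (harmonic, hence absorbable into $h$) leaves boundary data of size $O(|x-x_0|^{1+\alpha})$, whose harmonic extension is $C^{1,\alpha}$ up to flat portions (by reflection) and of order $r^{\min(1+\alpha,\pi/\omega)}$, with exponent strictly larger than $1$, at corners of opening $\omega<\pi$. Even then, boundary points where $\partial\Om$ is $C^1$ with no quantitative modulus are covered by neither case, so the clean route is the one you yourself point to at the end: prove the gradient bound on the smooth convex approximants with a constant depending only on $\operatorname{diam}(\Om)$, an interior ball and $\|f\|_{L^p}$, and pass to the limit — which is precisely the Kadlec--Grisvard argument the paper cites. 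In short: the architecture is right, but the ``Lipschitz data $\Rightarrow$ Lipschitz harmonic extension'' step must be replaced by the $C^{1,\alpha}$-data argument or by uniform estimates on smooth approximations.
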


\begin{remark}
When $f\equiv 1$, the Dirichlet energy is linked to the so-called torsional rigidity $T(\Om)$, with the formula $T(\Om)=-2E_{1}(\Om)$.
\end{remark}

\subsubsection*{First Dirichlet-eigenvalue of the Laplace operator}

We define $\lambda_1(\Omega)$ as the first eigenvalue for the Laplacian with Dirichlet's boundary conditions on $\partial\Om$. It is well-known that, if we define $U_{\Omega}$ as a solution
of the following minimization problem,
\begin{equation*}
\lambda_1(\Omega):=\int_{\Omega} |\nabla U_{\Omega}|^2
=
\min\left\{\int_{\Omega}|\nabla U|^2,\;  U\in H^1_0(\Omega),\; \int_{\Omega}U^2=1 \right\},
\end{equation*}
then $U_{\Omega}$ is (up to the sign) the positive first eigenfunction of $-\Delta$ in $\Om$:
\[
U_{\Omega} \in H^1_0(\Omega),\quad
-\Delta U_{\Omega}  =  \lambda_1(\Omega)U_{\Omega},\quad
\int_\Omega U_{\Om}^2=1.
\]
Again, like in Lemma \ref{l:U->H2}, if $\Omega$ is convex then $U_{\Om}\in H^2(\Om)\cap W^{1,\infty}(\Om)$ and $U_\Omega>0$ in $\Omega$.\\

We are now in position to state some applications of Corollary \ref{cor:reg}:
\begin{example}[Penalization by perimeter]
One can study 
\begin{equation}\label{eq:ex1}
\min \{J(\Om):=F\left(|\Om|,E_{f}(\Om),\la_{1}(\Om)\right)+P(\Om)\;/\;\Om\textrm{ convex}, D_{1}\subset\Om\subset D_{2}\}
\end{equation}
where $F:(0,+\infty)\times(-\infty,0)\times(0,\infty)\to\R$ is $ C^1$, $f\in H^1_{loc}(\R^2)$, $D_{1},D_{2}$ are bounded open sets, $E_f(\Omega)$ is the Dirichlet energy and $\lambda_1(\Omega)$ is the first eigenvalue of $-\Delta$ defined as above.

\begin{proposition}\label{prop:ex1}
If $\Om_0$ is an optimal set for the problem \eqref{eq:ex1}, then 
the free boundary $\partial\Om_0\cap (D_{2}\setminus \overline{D_{1}})$ is $ C^{1,1}$ (or equivalently $W^{2,\infty}$), that is to say $\partial\Om_0\cap (D_{2}\setminus \overline{D_{1}})$ has a bounded curvature.
\end{proposition}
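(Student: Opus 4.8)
The plan is to apply Corollary \ref{cor:reg}(i) to the functional $J(\Om)=R(\Om)+C(\Om)$ with $R(\Om)=F(|\Om|,E_f(\Om),\la_1(\Om))$ and $C(\Om)=P(\Om)$, and to check its hypotheses for the chosen parametrization. First I would fix the origin $O$ inside $D_1$ (so that all admissible sets are starshaped with respect to $O$ with gauge functions $u$ satisfying $k_1\le u\le k_2$, where $k_1,k_2$ are the gauge functions of $D_2,D_1$ respectively up to semicontinuous modification as in Remark \ref{k1k2}); this puts $\F_{ad}$ in the form \eqref{eq:Fad2}, and $\T_{in}$ corresponds exactly to the free boundary portion $\partial\Om_0\cap(D_2\setminus\overline{D_1})$. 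Then I would identify the "local-convex" term: the perimeter in polar coordinates is $P(\Om_u)=\int_\T \frac{\sqrt{u^2+u'^2}}{u^2}\,d\theta$, so $G(\theta,u,q)=\frac{\sqrt{u^2+q^2}}{u^2}$, which is smooth for $u>0$, independent of $\theta$, and one computes $G_{qq}=\frac{u^2}{u^2(u^2+q^2)^{3/2}}>0$ on $\T\times u_0(\T)\times\conv(u_0'(\T))$ since $u_0$ is bounded away from $0$ and $u_0'$ is bounded. This verifies hypotheses (i) (the $C^2$ part) and (iii) of Theorem \ref{th:reg} with $p=\infty$.

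Next I would verify the assumptions on $r(u):=R(\Om_u)=F(|\Om_u|,E_f(\Om_u),\la_1(\Om_u))$: namely that $r$ is $C^1$ near $u_0$ and $r'(u_0)\in L^\infty(\T)$. For this I would invoke the differentiability results of Section \ref{sect:derivative1} (referenced in the excerpt): $u\mapsto|\Om_u|=\int_\T\frac{1}{2u^2}d\theta$ is explicitly smooth; the shape functionals $E_f$ and $\la_1$ are shape-differentiable for Lipschitz ($W^{1,\infty}$) deformations, hence $u\mapsto E_f(\Om_u)$ and $u\mapsto\la_1(\Om_u)$ are $C^1$ on $W^{1,\infty}(\T)$ near $u_0$. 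The derivatives are given by boundary integrals (Hadamard formulas) involving $|\nabla U_{\Om_0}|^2$ on $\partial\Om_0$ (and $\la_1 U^2$ term for the eigenvalue); the key point is that, since $\Om_0$ is convex, Lemma \ref{l:U->H2} (and its analogue for $\la_1$) gives $U_{\Om_0}\in W^{1,\infty}(\Om_0)$, so $|\nabla U_{\Om_0}|$ is bounded on $\partial\Om_0$. Pulling these boundary integrals back to $\T$ via the parametrization — the change of variables contributes factors like $u_0,u_0'$ which are bounded — yields $r'(u_0)\in L^\infty(\T)$. Using that $F$ is $C^1$ and the chain rule then gives $r'(u_0)\in L^\infty(\T)$, i.e. hypothesis (ii) with $p=\infty$.

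With all hypotheses of Theorem \ref{th:reg}/Corollary \ref{cor:reg}(i) checked for $p=\infty$, the conclusion is that $(\partial\Om_0)_{in}$ is $C^1$ with curvature in $L^\infty$, i.e. $C^{1,1}=W^{2,\infty}$; since $(\partial\Om_0)_{in}=\partial\Om_0\cap(D_2\setminus\overline{D_1})$ this is exactly the statement of Proposition \ref{prop:ex1}. The only genuine subtlety — the main obstacle — is the regularity and differentiability input for the PDE functionals: one must know a priori that $\Om_0$ is convex (given) so that $U_{\Om_0}\in H^2\cap W^{1,\infty}$ and that the Hadamard derivative formulas are valid in the $W^{1,\infty}(\T)$-parametrization and produce an $L^\infty$, not merely measure-valued or $L^p$, density on $\T$. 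This is precisely where one leans on Section \ref{sect:derivative1} and on the convexity of the optimal set; everything else (choice of origin, computation of $G_{qq}$, application of the abstract theorem) is routine. One should also note $F$ is only required $C^1$ on the open set where $(|\Om|,E_f,\la_1)$ actually lives, which is fine since $|\Om_0|>0$, $E_f(\Om_0)<0$, $\la_1(\Om_0)>0$.
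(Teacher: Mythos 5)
Your proposal is correct and follows exactly the paper's route: the paper's proof of Proposition \ref{prop:ex1} consists precisely in invoking Corollary \ref{cor:reg} with $p=\infty$, the verification of its hypotheses being delegated to Section \ref{sect:derivative1} (smoothness of the area, the explicit form of $G$ for the perimeter with $G_{qq}=(u^2+q^2)^{-3/2}>0$, and Proposition \ref{prop:dir} together with Lemma \ref{l:U->H2} giving $e_f'(u_0),\,l_1'(u_0)\in L^\infty(\T)$ via $U_{\Om_0}\in W^{1,\infty}(\Om_0)$ for convex $\Om_0$). You have simply written out the details that the paper leaves implicit, and you correctly identified the one genuinely nontrivial input (the $L^\infty$ bound on the shape derivatives of the PDE functionals, which rests on the convexity of $\Om_0$).
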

The proof is a simple consequence of Section \ref{sect:derivative1}, which asserts that $R(\Om)=F(|\Om|,E_{f}(\Om),\la_{1}(\Om))$ and $C(\Om)=P(\Om)$ satisfy the assumptions in Corollary \ref{cor:reg} with $p=\infty$.

Note that in Proposition \ref{prop:ex1}  we could also add a dependence of $F$ in the capacity of $\Om$ or in any shape functional which is shape differentiable and whose shape derivative can be represented as a function of $L^\infty(\partial\Om)$ when $\Om$ is convex.
\begin{remark}\label{rk:constraint}
The constraints $D_{1}\subset\Om\subset D_{2}$ helps existence for the problem \eqref{eq:ex1}. Of course, if one can prove existence of an optimal shape without these constraints (mainly, one need to prove that a minimizing sequence remains bounded and does not converge to a segment), the result of Proposition \ref{prop:ex1} remains a fortiori true for the whole boundary of the optimal shape,
i.e. $\partial\Omega_0$ is $ C^{1,1}$.
\end{remark}
\end{example}

\begin{example}[Volume constraint and Perimeter penalization]
We can also consider a similar problem with a volume constraint:
\begin{equation*}\label{eq:ex2}
\min \{ J(\Om):=F(E_{f}(\Om), \la_{1}(\Om)) + P(\Om)\;/ \; \Om\textrm{ convex}, \textrm{ and }|\Om|=V_{0}\},
\quad
V_{0}\in(0,+\infty).
\end{equation*}
In this case, the first optimality condition will be similar to the one for the problem \eqref{eq:ex1} with
${F(E_{f}(\Om),\la_{1}(\Om))+\mu |\Omega|+P(\Om)}$, where $\mu$ is a Lagrange multiplier for the constraint $|\Omega|=V_{0}$. Theorem \ref{th:reg+constraint} applies and one gets globally the same regularity result (but global) as in Proposition \ref{prop:ex1} on any optimal shape.
\end{example}

\begin{example}[Perimeter constraint]\label{ex:perimeter}
If one considers again a problem with a perimeter constraint, 
\begin{equation}\label{eq:ex3}
\min \{J(\Om):=F(|\Om|,E_{f}(\Om),\la_{1}(\Om))\;/\;\Om\textrm{ convex}, \textrm{ and }P(\Om)=P_{0}\}
\end{equation}
where $P_{0}\in(0,+\infty)$, one needs to be more careful. In this case, the first optimality condition will be similar to the one for the problem \eqref{eq:ex1}, with $F(|\Om|,E_{f}(\Om),\la_{1}(\Om))+\mu P(\Omega)$, where $\mu$ is a Lagrange multiplier for the constraint $P(\Omega)=P_{0}$. 
Therefore if we are able to prove $\mu>0$ then we can apply the same strategy as in Theorem \ref{th:reg}, and we therefore get the same regularity result as in Proposition \ref{prop:ex1}. However, if $\mu<0$, we refer to Example \ref{ex:ex'3}.
\end{example}

\begin{example}
In a more abstract context, one can consider
\begin{equation}\label{eq:ex4}
\min \{J(\Om)-\alpha|\Om|+P(\Om)\;/\;\Om\textrm{ convex }\subset D\},
\end{equation}
where $J$ is a shape differentiable functional, increasing with respect to the domain inclusion, $D$ is an open set, and $\alpha>0$ (if $\alpha=0$, the empty set is clearly solution of the problem). Again, we get that $\partial\Om_0\cap D$ has a locally bounded curvature. Indeed, the derivative of $j(u):=J(\Om_{u})$ is a nonpositive measure, thanks to the monotonicity of $J$ (see \cite{LP}), and we apply Theorem \ref{th:reg} combined with the end of Remark \ref{rk:measure}.
\end{example}

\subsection
{Computation and estimate of first order shape derivatives}\label{sect:derivative1}

In this section we will prove the differentiability of the shape functionals involved in the examples of Section 
\ref{sect:examples-smooth}, which are needed in Proposition \ref{prop:ex1}.

\subsubsection{Volume and perimeter}\label{ssect:volper}
About geometrical functionals, it is easy to write the area and the perimeter as functional of $u$, namely

\begin{equation}\label{e:a,p}
a(u):=|\Om_{u}|=\int_{\T} \frac{1}{2u^2}d\theta, \;\; \qquad 
p(u):=P(\Om_{u})=\int_{\T} \frac{\sqrt{u^2+u'^2}}{u^2}d\theta,\quad
u\in W^{1,\infty}(\T)\cap\{u>0\}.
\end{equation}
Note that 
$p(u)=\int_{\T}G(\theta,u(\theta),u'(\theta))d\theta$ with $G(\theta,u,q)=\frac{\sqrt{u^2+q^2}}{u^2}$ and one can easily check that 
$G_{qq}=\frac{1}{(u^2+q^2)^{3/2}}>0$.

\subsubsection{Dirichlet Energy - Torsional rigidity}\label{sect:dir}
We focus our analysis around a convex open set $\Omega_0$ with parametrization $u_0>0$. For $\|u-u_0\|_{W^{1,\infty}(\T)}$ small, consider
\[
\begin{array}{cccl}
e_f:&W^{1,\infty}(\T)\cap\{u>0\}&\to&\R,\\
&u&\mapsto&E_f(\Om_u).
\end{array}
\]

In order to study the differentiability of $e_f$ near $u_0$, we use the classical framework of shape derivatives.
As usual, we need to work with an extension operator: the deformation $\partial\Omega_0$ to $\partial\Omega_u$ allows to define the vector field
$\xi(u):\partial\Omega_0\to\R^2$ such that 
$\partial\Omega_{u}=(Id+\xi(u))(\partial\Omega_0)$.
We will consider an extension to $\R^2$ of this transformation, since we need to study the differentiability of 
$u\to \hat{U}_u:=U_{\Omega_u}\circ(Id+\xi(u))\in H^1_0(\Omega_0)$, where 
$U_u:=U_{\Omega_u}$ (see \cite{HP} for example).

If we consider a smooth extension operator
$
\xi: W^{1,\infty}(\T) \to W^{1,\infty}(\R^2;\R^2),
$
we have $(Id+\xi(u))(\partial\Om_{0})=\partial\Om_u$ if 
\begin{equation}\label{eq:xi-bord}
\xi(u)\left(\frac{1}{u_0(\theta)},\theta\right)
=
\left(\frac{1}{u(\theta)}-\frac{1}{u_0(\theta)}\right)e^{i\theta}, 
\forall \theta \in \T,
\end{equation}
where $(\frac{1}{u_{0}}(\theta),\theta)$ are polar coordinates (for simplicity, we will often write $u_0$, $u$ or $\xi$ instead of $u_0(\theta)$, $u(\theta)$ or $\xi(u)(r,\theta)$).

\begin{remark}\label{r:polarxi}
The transformation $\xi(u)$ can be extended to $\R^2$ in different ways. 
The easiest way is to take 
\begin{equation}\label{eq:xi-radial}
\xi(u)(r,\theta)=\left(\frac{1}{u(\theta)}-\frac{1}{u_{0}(\theta)}\right)e^{i\theta}\eta(r,\theta)\quad \textrm{ in }\;\; \R^2,
\end{equation}
where  $\eta\in C_0^\infty(\mathbb R^2)$, $\eta=0$ in a neighborhood of the origin and $\eta=1$ in a neighborhood of $\partial\Omega_0$.

This (polar) extension  of $\xi(u)$ is such that 
$\xi\in  C^\infty(W^{1,\infty}(\T); W^{1,\infty}(\R^2;\R^2))$ near $u_0$, and is sufficient for the results of this section. More work will be needed for the second order shape derivatives, see Section \ref{ssect:dir2}.

Let us point out that if $\xi$ is $ C^2$ in a neighborhood of $u_0$ and satisfies \eqref{eq:xi-bord},
then 
\begin{equation}\label{e:xi'(u),xi''(u)}
\forall v\in W^{1,\infty}(\T):\quad
\xi'(u_0)(v)
=
-\frac{v}{u^2_0}e^{i\theta},
\quad
\xi''(u_0)(v,v)
=
2\frac{v^2}{u_0^3}e^{i\theta}\quad \textrm{on}\;\; \partial\Om_{0}.
\end{equation}
Note also that the method used in the proof of Lemma \ref{l:hU->Ck}, which is needed in the proof of Proposition \ref{prop:dir}, allows to say that  the method a priori fails if we consider an extension operator $\xi: H^{1}(\T)\to H^1(\R^2;\R^2)$. This explains our choice to work with $v\in W^{1,\infty}(\T)$ rather than $v\in H^1(\T)$, even though it introduces extra difficulties (like in Proposition \ref{prop:order1} and in the proof of Proposition \ref{prop:dir2}). 
\hfill$\Box$
\end{remark}

\noindent
The main result of this section is the following.
\begin{proposition}\label{prop:dir}
Let $\Om_{0}=\Om_{u_{0}}$ convex, $f\in H^k_{loc}(\R^2)$, $k\in\mathbb N^*$
and $\xi\in C^k(W^{1,\infty}(\T);W^{1,.\infty}(\R^2;\R^2))$ near $u_0$. 
We have:\\
i) 
$e_f$ is $ C^k$ near $u_0$.\\
ii)
If $\xi$ satisfies \eqref{eq:xi-bord}, then for any $v\in W^{1,\infty}(\T)$ we have
\begin{eqnarray}
e_f'(u_0)(v) 
&=&
-\int_{\partial\Omega_0}\frac{1}{2} |\nabla U_0|^2(\xi'(u_0)(v)\cdot\nu_0)ds_0 		
=
\int_{\T}\frac{1}{2} |\nabla U_0(x_{\theta})|^2 \frac{v(\theta)}{u^3_0(\theta)}d\theta,		\label{e:e'(u)(v)-2}
\end{eqnarray}
where $U_0\in H^2(\Om_0)$ is the solution of \eqref{eq:dirichlet} in $\Om_0$, 
$\nu_0$ is the exterior unit normal vector on $\partial\Omega_0$,
$x_{\theta}=\frac{1}{u_0(\theta)}(\cos\theta,\sin\theta)\in\partial\Om_0$.
\\
iii) Furthermore, $e_{f}'(u_{0})\in L^\infty(\T)$.
\end{proposition}

\noindent
The proof of this proposition is classical and uses the following lemma,
which will be needed in the following section.
\begin{lemma}\label{l:hU->Ck}
Let $u_0\in W^{1,\infty}(\T)$, $u_0>0$, $f\in H^k_{loc}(\mathbb R^2)$, $k\in\mathbb N^*$.
We have:
\begin{enumerate}
\item[i)] 
The map $u\in W^{1,\infty}(\T)\mapsto \hat{U}_u\in H^1(\Omega_0)$ is $ C^k$ near $u_0$.
\item[ii)]
For $v\in W^{1,\infty}(\T)$,  set 
\begin{eqnarray}
\hat{U}_0':=\hat{U}_u'(u_0)(v),\quad
U'_0
&:=&
\hat{U}_0'-\nabla U_0\cdot\xi'(u_0)(v). \label{e:Ut'}
\end{eqnarray}
Then
\begin{eqnarray}
U_0'\in L^2(\Omega_0),\quad
\Delta U_0'
&=&
0\;\textrm{ in }\mathcal D'(\Omega_0),								\label{e:Delta Ut'}\\
U_0'+\nabla U_0\cdot\xi'(u_0)(v) &\in&  H^1_0(\Omega_0).	 		 			\label{e:bcUt'}
\end{eqnarray}
\item[iii)] 
Furthermore, if $u_{0}''+u_{0}\geq0$, then $U'_0\in H^1(\Omega_0)$.
\end{enumerate}
\end{lemma}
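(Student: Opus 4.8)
\noindent The plan is to transport the state equation \eqref{eq:dirichlet} onto the fixed domain $\Om_0$ and then differentiate with respect to the parameter $u$. Write $\Phi_u:=\mathrm{Id}+\xi(u)$, which for $\|u-u_0\|_{W^{1,\infty}(\T)}$ small is a bi-Lipschitz homeomorphism of $\R^2$ with $\Phi_{u_0}=\mathrm{Id}$ on $\Om_0$. Pulling back \eqref{eq:dirichlet} by $\Phi_u$, the function $\hat U_u=U_{\Om_u}\circ\Phi_u$ is the unique element of $H^1_0(\Om_0)$ with
\begin{equation*}
\int_{\Om_0}A(u)\nabla\hat U_u\cdot\nabla\varphi\,dx=\int_{\Om_0}f_u\,\varphi\,dx\qquad\forall\,\varphi\in H^1_0(\Om_0),
\end{equation*}
where $A(u):=|\det D\Phi_u|\,\big((D\Phi_u)^{\top}D\Phi_u\big)^{-1}$ and $f_u:=(f\circ\Phi_u)\,|\det D\Phi_u|$. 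For assertion (i) I would check that $u\mapsto A(u)$ is $ C^k$ into $L^\infty(\Om_0)$ (the map $u\mapsto\Phi_u$ is affine, hence $ C^\infty$ into $W^{1,\infty}(\R^2;\R^2)$, and determinant and matrix inversion are smooth on the uniformly invertible range of $D\Phi_u$), and that $u\mapsto f_u$ is $ C^k$ into $L^2(\Om_0)$ — the latter being the only slightly delicate point, resting on a classical composition estimate which needs $f\in H^k_{loc}(\R^2)$ and which is exactly the reason we use $W^{1,\infty}$ rather than $H^1$ deformations (cf.\ Remark \ref{r:polarxi} and \cite{HP}). Then the implicit function theorem applied to $\mathcal G(u,w):=-\mathrm{div}(A(u)\nabla w)-f_u\in H^{-1}(\Om_0)$ settles (i): at $(u_0,\hat U_{u_0})$ one has $A(u_0)=\mathrm{Id}$, so $\partial_w\mathcal G(u_0,\cdot)=-\Delta$ is an isomorphism $H^1_0(\Om_0)\to H^{-1}(\Om_0)$ by Lax--Milgram, whence $u\mapsto\hat U_u$ is $ C^k$ near $u_0$ with values in the closed subspace $H^1_0(\Om_0)$.

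For assertion (ii), differentiating the map of (i) at $u_0$ in the direction $v$ gives $\hat U_0':=\hat U_u'(u_0)(v)\in H^1_0(\Om_0)$, so the very definition of $U_0'$ already yields $U_0'+\nabla U_0\cdot\xi'(u_0)(v)=\hat U_0'\in H^1_0(\Om_0)$, i.e.\ \eqref{e:bcUt'}. Since $U_0\in H^1_0(\Om_0)$ and $\xi'(u_0)(v)\in L^\infty(\R^2;\R^2)$, the transport term $\nabla U_0\cdot\xi'(u_0)(v)$ belongs to $L^2(\Om_0)$, hence $U_0'\in L^2(\Om_0)$. To obtain $\Delta U_0'=0$ in $\D'(\Om_0)$ I would fix $\varphi\in\D(\Om_0)$: for $u$ close to $u_0$ the support of $\varphi$ stays inside $\Om_u$, so $\int_{\Om_u}\nabla U_{\Om_u}\cdot\nabla\varphi=\int_{\Om_u}f\varphi$ with both sides equal to integrals over the fixed compact $\Supp\varphi$; differentiating at $u_0$ — using that on any fixed compact subset of $\Om_0$ the Eulerian derivative $U_0'$ is precisely the derivative of $u\mapsto U_{\Om_u}$, the transport contributions cancelling — leaves $\int_{\Om_0}\nabla U_0'\cdot\nabla\varphi=0$. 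This is the classical shape-derivative computation; see \cite{HP}. Together these give \eqref{e:Delta Ut'}.

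Assertion (iii) is where I expect the real work, and it is the only place convexity is used, solely to improve the regularity of the state. Since $\Om_0$ is convex (i.e.\ $u_0''+u_0\geq0$) and $f\in H^1_{loc}(\R^2)$, hence $f\in L^p_{loc}(\R^2)$ for some $p>2$, Lemma \ref{l:U->H2} gives $U_0\in H^2(\Om_0)\cap W^{1,\infty}(\Om_0)$, so $\nabla U_0\in H^1(\Om_0;\R^2)\cap L^\infty(\Om_0;\R^2)$. Since moreover $\xi'(u_0)(v)\in W^{1,\infty}(\R^2;\R^2)$ (because $\xi$ is differentiable into $W^{1,\infty}(\R^2;\R^2)$, cf.\ \eqref{eq:xi-radial} and \eqref{e:xi'(u),xi''(u)}), the product rule
\begin{equation*}
\nabla\big(\nabla U_0\cdot\xi'(u_0)(v)\big)=D^2U_0\,\xi'(u_0)(v)+\big(D(\xi'(u_0)(v))\big)^{\!\top}\nabla U_0
\end{equation*}
shows $\nabla U_0\cdot\xi'(u_0)(v)\in H^1(\Om_0)$ (first term $L^2\cdot L^\infty$, second term $L^\infty\cdot L^\infty$ on the bounded set $\Om_0$). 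Combined with $\hat U_0'\in H^1_0(\Om_0)$ from (i), this gives $U_0'=\hat U_0'-\nabla U_0\cdot\xi'(u_0)(v)\in H^1(\Om_0)$.

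The main obstacle is thus concentrated in (iii): the $H^1$-regularity of $U_0'$ is equivalent to that of the transport term $\nabla U_0\cdot\xi'(u_0)(v)$, which forces $U_0\in H^2(\Om_0)$; on a general non-convex bounded domain this is false, and it is exactly the convexity of $\Om_0$, via Kadlec's theorem (Lemma \ref{l:U->H2}), that makes it work. A secondary, purely technical, hurdle is the $ C^k$-dependence in $u$ of the composed datum $f\circ\Phi_u$ in step (i), which is handled within the $W^{1,\infty}$ framework.
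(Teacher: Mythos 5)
Your proof is correct and follows essentially the same route as the paper: for (i) the paper simply cites \cite[Proposition 5.3.7]{HP} for the $C^k$-dependence of $\theta\mapsto U_{(Id+\theta)(\Omega_0)}\circ(Id+\theta)$ and composes with $\xi$ (your implicit-function-theorem argument on the transported variational formulation with $A(u)$ and $f_u$ is precisely the proof of that cited result), and for \eqref{e:Delta Ut'} the paper differentiates the identity $\int_{\Omega_0}\hat U_u\circ S(u)\,\Delta\varphi - f\varphi=0$ with the inverse map $S(u)=(Id+\xi(u))^{-1}$, which is exactly the chain-rule computation you summarize as ``the transport contributions cancel'' (the identification of $U_0'$ with the local derivative of $u\mapsto U_{\Omega_u}$ is the content of Th\'eor\`eme 5.3.1 of \cite{HP}, quoted in the remark following the lemma). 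Part (iii) is identical to the paper's: convexity gives $U_0\in H^2(\Omega_0)\cap W^{1,\infty}(\Omega_0)$ by Lemma \ref{l:U->H2}, hence the transport term, and therefore $U_0'$, lies in $H^1(\Omega_0)$.
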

\begin{remark}
Here we are not interested in the differentiability of $u\mapsto U_u$
and the function $U'_0$ is directly defined by \eqref{e:Ut'}. 
In fact,  the map $u\mapsto U_u$ (with $U_u$ extended by zero in $\R^2$) is differentiable in $L^2(\R^2)$
and its derivative equals $U_0'$ in $\Omega_0$, see  Th\'eor\`eme 5.3.1, \cite{HP}
for example.
\end{remark}

\noindent
{\bf Proof of Lemma \ref{l:hU->Ck}:}
\begin{enumerate}
\item[i)] 
The map  
$\theta\in W^{1,\infty}(\R^2;\R^2)\mapsto U_{(Id+\theta)(\Om_{0})}\circ (Id+\theta)\in H^1_{0}(\Om_{0})$ is $ C^k$ in a neighborhood of $0$, 
see for example \cite[Proposition 5.3.7]{HP}.
We conclude by using the composition of this map with $\xi$.

\item[ii)] 
It is clear that $U_0'\in L^2(\Omega_0)$ and that 
$U_0'+\nabla U_0\cdot\xi'(u_0)(v)=\hat{U}_0'\in H^1_0(\Om_0)$.
To prove $\Delta U_0'=0$ we consider the map 
$S:W^{1,\infty}(\T)\mapsto W^{1,\infty}(\R^2;\R^2)$,
$S(u)=(Id+\xi(u))^{-1}$, which is well defined and $ C^k$ in a neighborhood of $u_0$. From $S(u)\circ(Id+\xi(u))=Id$, it is easy to check that for
$v\in W^{1,\infty}(\T)$ we have
\begin{equation}\label{e:S0',S0''}
 S'(u_0)((v)=-\xi'(u_0)(v),\qquad
 S''(u_0)(v,v)=2\nabla\xi'(u_0)(v)\cdot\xi'(u_0)(v)-\xi''(u_0)(v,v).
\end{equation}
Let $\varphi\in\mathcal D(\Omega_0)$. From \eqref{eq:dirichlet}, for all $u$ near $u_0$  we have
$
 \int_{\Om_0}\hat{U}_u\circ S(u)\Delta\varphi - f\varphi=0.
$
Differentiating this equality on the direction $v$ gives
\begin{equation}\label{e:inthuLapphi'}
 \int_{\Om_0}
 \left(
 \hat{U}_u'\circ S(u) + 
 \nabla\hat{U}_u\circ S(u)\cdot S'(u_0)(v)
 \right)\Delta\varphi=0.
\end{equation}
Replacing $u=u_0$ in (\ref{e:inthuLapphi'}) and using \eqref{e:S0',S0''} gives
\[
 \int_{\Om_0}
 \left(\hat{U}'_0-\nabla U_0\cdot\xi'(u_0)(v)\right)\Delta\varphi=0,
\]
which proves ii).
\item[iii)]
If $u_{0}''+u_{0}\geq0$ then $\Omega_0$ is convex. From Lemma \ref{l:U->H2} we obtain
$U_0\in H^2(\Omega_0)$, which implies $U_0'\in H^1(\Om_0)$.
\hfill$\Box$
\end{enumerate}

\noindent\hfill\\
{\bf Proof of Proposition \ref{prop:dir}:}\\
i) 
The functional
$u\mapsto e_{f}(u)$
can be seen as $e_f(u)=\mathcal E_{f,\Omega_0}\circ\xi(u)$, where
$\mathcal E_{f,\Omega_0}$ is a classical functional, introduced to compute shape derivatives:
\begin{equation}\label{e:calE}
\begin{array}{cccl}
\mathcal{E}_{f,\Om_{0}}(\theta):&W^{1,\infty}(\R^2;\R^2)&\to&\R\\
&\theta&\mapsto&E_f((Id+\theta)(\Om_0)).
\end{array}
\end{equation}
As $\xi$ is $ C^k$ near $u_0$ and $\mathcal E_{f,\Om_0}$ is $ C^k$ near $\theta=0$ in 
$W^{1,\infty}(\mathbb R^2;\R^2)$, see \cite[Corollaire 5.3.8]{HP}),
the differentiability of $e_f(u)$ follows.\\
ii)
As we have
$e_f(u)=-\frac{1}{2}\int_{\Omega_u} \hat{U}_u\circ S(u) f$ and
$\hat{U}_u=0$ on $\partial\Omega_0$, from Corollaire 5.2.5, \cite{HP}, we obtain
\begin{equation}\label{e:ef'(u)(v)}
e_f'(u)(v)
=
-\frac{1}{2}
\int_{\Omega_u}
\left(
\hat{U}_u'\circ S(u)+\nabla\hat{U}_u\circ S(u)\cdot S'(u)(v)
\right)f.
\end{equation}
Taking $u=u_0$ in the last equality and using (\ref{e:S0',S0''}) gives
\[
e_f'(u_0)(v)
=
-\frac{1}{2}
\int_{\Omega_0}
(\hat{U}_0' - \nabla U_0\cdot\xi'(u_0)(v))f 
=
-\frac{1}{2}\int_{\Omega_0}
U_0'f
=
-\frac{1}{2}
\int_{\partial\Omega_0}|\nabla U_0|^2(\xi'(u_0)(v)\cdot\nu_0)ds_0.
\]
Finally, by changing the variable 
$s_0=\frac{\sqrt{u^2_0+(u'_0)^2}}{u^2_0}d\theta$, taking into account that
$\nu_0=\left(\frac{1}{u_0}e^{i\theta}+\frac{u'_0}{u^2_0}(ie^{i\theta})\right)\frac{u^2_0}{\sqrt{u^2_0+u'^2_0}}$,
and after using \eqref{e:xi'(u),xi''(u)}.
we obtain (\ref{e:e'(u)(v)-2}).\\
iii)
As $k\in N^*$ it follows $f\in L^p(\Omega_0)$, for all $p\in[1,\infty)$.
Then Lemma \ref{l:U->H2} gives $U_0\in W^{1,\infty}(\Omega_0)$, so $e_f'(u_0)\in L^\infty(\T)$.
\hfill$\Box$

\subsubsection{First eigenvalue of the Laplace operator with Dirichlet boundary conditions}\label{sect:la1}
We consider
$$\begin{array}{cccl}
l_{1}:&\{u\in W^{1,\infty}(\T),\; u>0\}&\to&\R\\
&u&\mapsto&l_1(u):=\lambda_{1}(\Om_{u})
\end{array}
$$
and we have the same result as in Proposition \ref{prop:dir}, see for example Th\'eor\`eme 5.7.1, \cite{HP}, and \eqref{e:xi'(u),xi''(u)},
with
\[
l_{1}'(u_0)(v) = \int_{\T} |\nabla U_0|^2(x_{\theta}) \frac{v(\theta)}{u^3_0(\theta)}d\theta,
\quad
\forall v\in W^{1,\infty}(\T).
\]

\subsection{Application with the dual parametrization}\label{sect:support}
Instead of using parametrization by the gauge function, one can also use the well-known parametrization by the support function of a body, namely
$$
\forall\theta\in\T,\;\;  h_{\Om}(\theta):=\max\{x\cdot e^{i\theta},\; x\in\Omega\}.
$$
We get a characterization of the convexity in a similar way to \eqref{eq:conv}:
$$\Om\textrm{ is convex }\Rightarrow h_{\Om}''+h_{\Om}\geq 0.$$
Conversely, if $h\in W^{1,\infty}(\T)$ satisfies $h''+h\geq 0$, then one can find a unique (after a choice of an origin) open convex set, denoted $\Om^{h}$, whose support function is $h$ (see \cite{Schneider} for example). 
This parametrization is the dual of the one with the gauge function. Indeed, the gauge function of $\Om$ is the support function of the dual body of $\Om$ and vice versa.\\

Therefore the optimization problem
\begin{equation}\label{eq:minshapeh}
\min\{J(\Om)\;\;/\;\Om\in\mathcal{S}_{ad},\; \Omega\textrm{ convex}\},
\end{equation}
where $\mathcal{S}_{ad}$ is a class of open planar sets, becomes
\begin{equation}\label{eq:minh}
\left\{\begin{array}{l} 
\textrm{find }h_0\in \widetilde{\mathcal{F}_{ad}}\textrm{ such that }\widetilde{j}(h_0)=\min\{\widetilde{j}(h),\ h\in\widetilde{\mathcal{F}_{ad}},\; h''+h\geq 0\},\quad \textrm{ where }
 \\[3mm]
\widetilde{j}(h)=J(\Om^{h}), \textrm{ and }\widetilde{\mathcal{F}_{ad}}=\{h\in W^{1,\infty}(\T)\;/\;\Om^{h}\in\mathcal{S}_{ad}\},
\end{array}\right.
\end{equation}
which is the same as \eqref{eq:min}.\\

Again, if the set of admissible functions can be written
\begin{equation}\label{eq:Fadtilde}
\widetilde{\mathcal{F}_{ad}}=\{h\in W^{1,\infty}(\T)\;/\;k_{1}\leq h\leq k_{2}\},
\end{equation}
we can define
$\widetilde{\T_{in}}=\{\theta\in\T\; /\;k_{1}(\theta)<h(\theta)<k_{2}(\theta)\}$, and then
$\widetilde{(\partial\Om)_{in}}=\{x\in\partial\Om\textrm{ s.t. }\exists\theta\in\widetilde{\T_{in}}, x\cdot e^{i\theta}=h(\theta)\}$, i.e.
the set of points of $\partial\Om$ whose supporting plane is orthogonal to $(\cos(\theta),\sin(\theta))$ with $\theta\in\widetilde{\T_{in}}$.\\

As in Example \ref{ex:inclusion}, if $\mathcal{S}_{ad}=\{\Om\;/\;K_{1}\subset\Om\subset K_{2}\}$, where $K_{1}$ and $K_{2}$ are two convex open sets, then \eqref{eq:Fadtilde} is satisfied with $k_{1}, k_{2}$ the supports functions of $K_{1}, K_{2}$, and in that case $\widetilde{(\partial\Om)_{in}}=\partial\Om\setminus(\partial K_{1}\cup \partial K_{2})$.

Therefore one gets a dual version of Corollary \ref{cor:reg} as follows.

\begin{corollary}\label{cor:h} 
Let $\Om_{0}=\Om^{h_{0}}$ be an optimal shape for the problem \eqref{eq:minshapeh} with $J=R+C$, and assume that, 
$$\forall h\in\widetilde{\mathcal{F}_{ad}},\quad R(\Om^{h})=r(h)\textrm{ and }C(\Om^{h})=\int_{\T}G(\theta,h(\theta),h'(\theta))d\theta$$
where $r$ and $G$ satisfy the assumptions of Theorem \ref{th:reg} for some $p\in[1,\infty]$. Then 
$$h_{0}\in W^{2,p}(\widetilde{\T_{in}}).$$
This implies in particular that $\widetilde{(\partial\Om_{0})_{in}}$ is strictly convex.
\end{corollary}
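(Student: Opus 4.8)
The idea is that Corollary~\ref{cor:h} is an immediate translation of Corollary~\ref{cor:reg} (equivalently, of Theorems~\ref{th:reg} and~\ref{th:reg+constraint}) into the dual parametrization, together with a geometric identification of $W^{2,p}$-regularity of the support function. Indeed, as observed after \eqref{eq:minh}, problem \eqref{eq:minshapeh} is \emph{literally} of the form \eqref{eq:min}: setting $j:=\widetilde{j}$, $\F_{ad}:=\widetilde{\mathcal F_{ad}}$, the convexity constraint $h''+h\geq 0$ plays exactly the role of $u''+u\geq 0$, and the hypotheses on $r$ and $G$ are, by assumption, the ones required in Theorem~\ref{th:reg}. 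So the plan is: (i) check that $\widetilde{\F_{ad}}$ has the form \eqref{eq:Fad2} (this is \eqref{eq:Fadtilde}, with $k_1$ lower- and $k_2$ upper-semicontinuous after the harmless modification of Remark~\ref{k1k2}); (ii) apply Theorem~\ref{th:reg} to $h_0$ to get $h_0\in W^{2,p}(\widetilde{\T_{in}})$, where $\widetilde{\T_{in}}=\{\theta\ /\ k_1(\theta)<h_0(\theta)<k_2(\theta)\}$ is exactly the $\T_{in}$ attached to $(j,\F_{ad})=(\widetilde j,\widetilde{\F_{ad}})$; (iii) translate this into the stated geometric conclusion on $\widetilde{(\partial\Om_0)_{in}}$.

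For step (iii), I would use the classical correspondence between the support function and the boundary: when $h$ is $C^{1,1}$ (equivalently $W^{2,\infty}$) the convex body $\Om^h$ has the $C^1$ boundary point with outer normal $e^{i\theta}$ given by the parametrization $\theta\mapsto h(\theta)e^{i\theta}+h'(\theta)\,ie^{i\theta}$, and the radius of curvature there equals $h(\theta)+h''(\theta)\geq 0$ (see \cite{Schneider}). More generally, for $h_0\in W^{2,p}(\widetilde{\T_{in}})$ the measure $h_0''+h_0$, restricted to $\widetilde{\T_{in}}$, is an $L^p$ density, i.e.\ the curvature radius of $\partial\Om_0$ is an $L^p$ function on the portion $\widetilde{(\partial\Om_0)_{in}}$ of the boundary whose normals point in directions $\theta\in\widetilde{\T_{in}}$. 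The key point to stress — as the paper already emphasizes in the introduction and in the remark preceding Corollary~\ref{cor:h} — is that $W^{2,p}$-regularity of $h_0$ does \emph{not} give smoothness of $\partial\Om_0$: it only forbids flat segments with normal direction in $\widetilde{\T_{in}}$ (a segment corresponds to an atom of $h_0''+h_0$, which an $L^p$ density excludes), so it exactly says that $\widetilde{(\partial\Om_0)_{in}}$ contains no segment, i.e.\ is strictly convex. For $p=1$ one argues that $h_0''+h_0$ has no singular part on $\widetilde{\T_{in}}$, hence in particular no Dirac mass, which again rules out segments.

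For the equality-constrained variant one would invoke Theorem~\ref{th:reg+constraint} in place of Theorem~\ref{th:reg}, under the corresponding hypothesis that $h\mapsto M(\Om^h)$ is $C^1$ near $h_0$ with surjective derivative in $(L^p(\T))^d$; the argument is otherwise unchanged. One should also record that if $\mathcal S_{ad}=\{\Om\ /\ K_1\subset\Om\subset K_2\}$ with $K_1,K_2$ convex, then $\widetilde{\F_{ad}}$ is indeed of the form \eqref{eq:Fadtilde} with $k_1,k_2$ the support functions of $K_1,K_2$ (automatically convex, hence continuous), and $\widetilde{(\partial\Om_0)_{in}}=\partial\Om_0\setminus(\partial K_1\cup\partial K_2)$, which makes the statement concrete.

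**Expected main obstacle.** There is essentially no analytic obstacle: the hard work has already been done in Theorem~\ref{th:reg}, and the dual parametrization is covered because those results are, as the authors note, "in an abstract analytical context" depending only on the inequality $\varphi''+\varphi\geq 0$ and not on the geometric meaning of $\varphi$. The only genuinely non-formal step is the careful geometric identification in (iii): making precise that the absence of a singular (resp.\ atomic) part in $h_0''+h_0$ on $\widetilde{\T_{in}}$ is equivalent to strict convexity of the corresponding boundary arc, and in particular being careful that strict convexity of $\widetilde{(\partial\Om_0)_{in}}$ is all one can hope for — a $W^{2,\infty}$ support function is compatible with a boundary that is merely $C^1$ and not $C^{1,1}$, since the map $\theta\mapsto h_0(\theta)e^{i\theta}+h_0'(\theta)ie^{i\theta}$ may fail to be a bi-Lipschitz reparametrization of arclength where $h_0+h_0''$ vanishes. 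This is exactly the phenomenon highlighted in the remark preceding the corollary, so it should be stated rather than belabored.
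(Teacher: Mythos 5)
Your proposal is correct and follows exactly the route the paper intends: the corollary is stated there without a separate proof, as a direct application of Theorem~\ref{th:reg} (resp.\ Theorem~\ref{th:reg+constraint}) to the support-function parametrization, combined with the standard fact that an atom of $h_0''+h_0$ at $\theta$ corresponds to a segment of $\partial\Om_0$ with outer normal $e^{i\theta}$, so that $h_0''+h_0\in L^p(\widetilde{\T_{in}})$ rules out segments and yields strict convexity of $\widetilde{(\partial\Om_{0})_{in}}$. Your added caveat that $W^{2,p}$-regularity of $h_0$ does not give smoothness of the boundary is precisely the point the authors emphasize, so nothing is missing.
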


\begin{remark}
This parametrization is especially interesting when one has to deal with the perimeter because in this case $P(\Om^{h})=\int_{\T}hd\theta$. An example of a function $C(\Om^{h})$ satisfying the hypotheses of Corollary \ref{cor:h} is now the opposite of the area, since
$$|\Om^{h}|=\frac{1}{2}\int_{\T}(h^2-h'^2)d\theta.$$

However, it is not easy to work now with functionals coming from PDE. Indeed, it is well-known for example, that the derivative of $\la_{1}$ in terms of $h$ is not more regular than a measure on $\T$, see \cite{J1,J2}. We think that this can be explained by the fact that some solutions of problems like \eqref{eq:ex1} may not be strictly convex.

\end{remark}

\section{Optimization of concave non-local shape functionals}\label{sect:polygon}

In this section, we prove the results of Section \ref{sssect:poly}. The main proof relies on the analysis of the second order shape derivatives.
Next we apply these results to various energy functionals involving
the Dirichlet energy or the first eigenvalue of the Laplace-Dirichlet operator. Since the optimal shapes come with no a priori regularity except the convexity condition, one needs some delicate computations to check the required assumptions. This leads to rather sharp estimates on second derivatives which are interesting for themselves.

\subsection{Proof of Theorems \ref{th:conc} and \ref{th:conc+constraint}}\label{sect:proof-conc}

We first introduce the classical Sobolev semi-norms on $\T$. For $s\in\R_{+}$, we set:
$$|u|_{H^s(\T)}^2:=\sum_{n\in\Z}|n|^{2s}|\widehat{u}(n)|^2\textrm{ where }\widehat{u}(n):=\int_{\T}u(\theta)e^{-in\theta}\frac{d\theta}{2\pi}.$$
We also define $H^s(\T):=\{u\in L^2(\T)\textrm{ such that }|u|_{H^s(\T)}<+\infty\}$ and $\|u\|_{H^s(\T)}^2:=\|u\|^2_{L^2(\T)}+|u|^2_{H^s(\T)}$.\\

\subsubsection*{Proof of Theorem \ref{th:conc}.}
The main idea is to prove that for a deformation supported by a small set, the estimate \eqref{eq:coercivite} is a concavity estimate, and so it violates the second order optimality condition. This relies of the following Poincar\'e-type inequality:
\begin{lemma}\label{lem:poincare}
Let $s\in[0,1)$ and $ \eps\in(0,\pi)$. Then there exists a constant $C=C(s)$ independant on $\eps$ such that,
$$\forall u\in H^1(\T)\textrm{ such that }{\Supp}(u)\subset[0,\eps],\; \|u\|_{H^{s}(\T)}\leq C\eps^{1-s}|u|_{H^1(\T)}.$$
\end{lemma}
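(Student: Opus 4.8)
The plan is to prove this Poincaré-type inequality by a direct Fourier-analytic argument, exploiting the fact that a function supported on a small interval $[0,\eps]$ of the torus must have mean value and low-frequency content controlled by its high-frequency ($H^1$) content. First I would reduce to the case of zero mean value: if $\bar u = \widehat u(0)$, then since $u$ is supported on $[0,\eps]$ we have $|\bar u| \le \frac{1}{2\pi}\int_{[0,\eps]}|u| \le \frac{1}{2\pi}\eps^{1/2}\|u\|_{L^2(\T)}$, but more usefully, by the one-dimensional Poincaré inequality on the interval $[0,\eps]$ (applied to $u$ which vanishes at the endpoints $0$ and $\eps$), one gets $\|u\|_{L^2(\T)} = \|u\|_{L^2([0,\eps])} \le \frac{\eps}{\pi}\|u'\|_{L^2([0,\eps])} = \frac{\eps}{\pi}|u|_{H^1(\T)}$. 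This already handles the $s=0$ part of the norm, i.e. controls $\|u\|_{L^2(\T)} \le C\eps\,|u|_{H^1(\T)} \le C\eps^{1-s}|u|_{H^1(\T)}$ since $\eps < \pi$ and $s < 1$.

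Next I would control the seminorm $|u|_{H^s(\T)}^2 = \sum_{n\neq 0}|n|^{2s}|\widehat u(n)|^2$ by interpolating between $\|u\|_{L^2}^2 = \sum|\widehat u(n)|^2$ and $|u|_{H^1}^2 = \sum|n|^2|\widehat u(n)|^2$. The elementary inequality $|n|^{2s} \le |n|^2$ for $|n|\ge 1$ gives $|u|_{H^s}\le |u|_{H^1}$, which is not quite enough because I need the small factor $\eps^{1-s}$. The right move is to split the frequencies at a threshold $N \sim 1/\eps$: for $|n| \le N$ use $|n|^{2s}|\widehat u(n)|^2 \le N^{2s}|\widehat u(n)|^2$ and sum against $\|u\|_{L^2}^2 \le (\eps/\pi)^2|u|_{H^1}^2$, giving a contribution $\lesssim N^{2s}\eps^2|u|_{H^1}^2 \sim \eps^{2s}\eps^2\eps^{-2s}\cdot\eps^{2(1-s)}$... let me instead do it cleanly: for $|n|\le N$, $|n|^{2s} = |n|^{2s-2}|n|^2 \le N^{2s-2}|n|^2$ when $s<1$ (wait, $2s-2<0$ so $|n|^{2s-2}$ is decreasing, hence $\le 1$, not $\le N^{2s-2}$). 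So instead: $|n|^{2s}|\widehat u(n)|^2 \le |\widehat u(n)|^2$ for the low part summed gives $\|u\|_{L^2}^2 \le (\eps/\pi)^2|u|_{H^1}^2 \le C\eps^{2(1-s)}|u|_{H^1}^2$ since $\eps^2 \le \pi^{2s}\eps^{2(1-s)}$. For the high part $|n| > N := \lceil 1/\eps\rceil$, use $|n|^{2s} \le N^{2s-2}|n|^2 \le \eps^{2-2s}|n|^2$ (valid since $2s-2<0$ and $|n|>N\ge 1/\eps$ means $|n|^{2s-2} < (1/\eps)^{2s-2} = \eps^{2-2s}$), so this part is $\le \eps^{2-2s}|u|_{H^1}^2$. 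Combining both pieces yields $|u|_{H^s}^2 \le C\eps^{2(1-s)}|u|_{H^1}^2$, and together with the $L^2$ bound, $\|u\|_{H^s}^2 \le C\eps^{2(1-s)}|u|_{H^1}^2$.

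The constant $C$ depends only on $s$ (through the numerical factors $\pi^{2s}$ and the threshold rounding) and not on $\eps$, which is exactly what is claimed. The main subtlety — not really an obstacle, but the step to get right — is the frequency splitting at $N\sim 1/\eps$ and verifying that both the low-frequency contribution (controlled via the interval Poincaré inequality, which genuinely uses $\Supp(u)\subset[0,\eps]$) and the high-frequency contribution produce the same power $\eps^{2(1-s)}$; the condition $s<1$ is essential so that $|n|^{2s-2}$ is summable-against-$|n|^2$ in the right direction. One should also note that $u\in H^1(\T)$ with support in $[0,\eps]\subsetneq\T$ forces $u(0)=u(\eps)=0$ in the trace sense, which legitimizes the use of the Dirichlet Poincaré inequality on $[0,\eps]$; this is where the hypothesis $\eps<\pi<2\pi$ (so the support is a proper subinterval) enters.
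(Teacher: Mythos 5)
Your overall strategy is sound and is, at bottom, the same as the paper's: the support hypothesis enters only through the Dirichlet Poincar\'e inequality $\|u\|_{L^2(\T)}\le \frac{\eps}{\pi}\,|u|_{H^1(\T)}$ on $[0,\eps]$ (legitimate, as you note, because $u\in H^1(\T)$ vanishing outside $[0,\eps]$ has $u(0)=u(\eps)=0$), and then one interpolates between $L^2$ and $H^1$. The paper does the interpolation in one line by H\"older's inequality on the Fourier side, writing $|u|_{H^{s}(\T)}^2=\sum_n \bigl(|n|^2|\widehat{u}(n)|^2\bigr)^{s}\bigl(|\widehat{u}(n)|^2\bigr)^{1-s}\le |u|_{H^1}^{2s}\|u\|_{L^2}^{2(1-s)}$, which immediately gives $|u|_{H^{s}}\le (\eps/\pi)^{1-s}|u|_{H^1}$. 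Your dyadic-type splitting at $N\sim 1/\eps$ is precisely the standard proof of that interpolation inequality, so the two routes are morally identical; the H\"older version just spares you the threshold bookkeeping.

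There is, however, an error in the low-frequency estimate as you finally state it: for $1\le|n|\le N$ you claim $|n|^{2s}|\widehat{u}(n)|^2\le |\widehat{u}(n)|^2$, which requires $|n|^{2s}\le 1$ and is false for every $|n|\ge 2$ once $s>0$ --- and $N=\lceil 1/\eps\rceil$ is large exactly in the regime of interest. The fact that $|n|^{2s-2}\le 1$ only yields $|n|^{2s}\le|n|^2$, i.e.\ no gain. You should simply keep your \emph{first} computation, which was correct: $\sum_{|n|\le N}|n|^{2s}|\widehat{u}(n)|^2\le N^{2s}\|u\|_{L^2}^2\le C\,\eps^{-2s}\,\eps^{2}\,|u|_{H^1}^2=C\,\eps^{2(1-s)}|u|_{H^1}^2$. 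The high-frequency bound $|n|^{2s}=|n|^{2s-2}|n|^2\le N^{2s-2}|n|^2\le C\eps^{2(1-s)}|n|^2$ for $|n|>N$ is fine as written. With that single correction the argument closes, the constant depends only on $s$, and the hypothesis $\eps<\pi$ is used only to absorb the pure $L^2$ term via $\eps^{2}\le \pi^{2s}\eps^{2(1-s)}$ (not, as you suggest, to make the support a proper subinterval, for which $\eps<2\pi$ would suffice).
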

{\bf Proof of Lemma \ref{lem:poincare}}.
Let $u\in  C^\infty(\T)$ with ${\Supp}(u)\subset[0,\eps]$.
If we first assume that $s=0$, then we have the classical Poincar\'e inequality (with the optimal constant), proved using the fact that $|u|_{H^1(\T)}^2=\int_{\T}u'^2$, so
$$\|u\|_{L^2(\T)}\leq \frac{\eps}{\pi}|u|_{H^1(\T)}.$$
If one has now $s\in(0,1)$, one can proceed with an interpolation inequality, easily obtained by H\"older inequality:
$$|u|_{H^{s}(\T)}^2=\sum_{n\in\Z}|n|^{2s}|\widehat{u}(n)|^{2s}|\widehat{u}(n)|^{2(1-s)}\leq \left(\sum_{n\in\Z}|n|^{2}|\widehat{u}(n)|^2\right)^{s}\left(\sum_{n\in\Z}|\widehat{u}(n)|^2\right)^{1-s},$$
and so
$$|u|_{H^{s}(\T)}\leq |u|_{H^1(\T)}^{s}\|u\|_{L^2(\T)}^{1-s}\leq\frac{\eps^{1-s}}{\pi^{1-s}}|u|_{H^1(\T)}.$$
\qed
Let $K:=\Supp(u_0''+u_{0})$. Assume that, for a connected component $I$ of $\T_{in}$,  $K\cap I$ is infinite. Then, there exists $\theta_{0}\in\overline{I}$ an accumulation point of $K\cap I$. Without loss of generality we can assume
$\theta_0=0$ and also that there exists a decreasing sequence $(\eps_n)$ tending to $0$ such that
$K\cap(0,\eps_n)\subset I$ is infinite. Then,  we follow an idea of T. Lachand-Robert and M.A. Peletier as in \cite{LN} (see also \cite{LRP}). We can always find $0<\eps_n^i<\eps_n$, $i=1,\ldots,4$, increasing with respect to $i$, such that ${\Supp}(u_0''+u_{0})\cap(\eps_n^i,\eps_n^{i+1})\neq\emptyset$, $i=1,3$. We consider $v_{n,i}\in W^{1,\infty}(\T)$ solving
\[
 v_{n,i}''+v_{n,i}=\mathbbmss{1}_{(\eps_n^i,\eps_n^{i+1})}(u_0''+u_0),\quad v_{n,i}=0 \mbox{ in }
(0,\eps_n)^c,\; i=1,\ldots,3.
\]
Such $v_{n,i}$ exist since we avoid the spectrum of the Laplace operator with Dirichlet boundary conditions.
Next, we look for $\lambda_{n,i},\ i=1,3$ such that
${\displaystyle v_n=\sum_{i=1,3}\lambda_{n,i} v_{n,i}}$ satisfies
\[
v'_n(0^+)=v'_n(\eps_n^-)=0.
\]
The above derivatives exist since $v_{n,i}$ are regular near $0$ and $\eps_n$ in $(0,\eps_n)$.
We can always find such $\lambda_{n,i}$ so as they satisfy two linear equations.
It implies that $v_n''$ does not have any Dirac mass at $0$ and $\eps_n$.
It even implies that the support of $v_n$ is included in $[\eps_n^1,\eps_n^4]$. In particular, $v_n''+v_n=\varphi (u_0''+u_0)$ where $\varphi$ is bounded and with support in $[\eps_n^1,\eps_n^4]$. As ${{\Supp}}(u_{0})\cap(\eps_{n}^i,\eps_{n}^{i+1})\neq \emptyset$, we also have $v_n\neq 0$.

 Since ${\Supp}(v_n)\subset \T_{in}$ and  $v_n''+v_n=\varphi (u_0''+u_0)$, it follows that $u_{0}+tv_{n}$ is admissible for $|t|$ small enough (and $n$ fixed).
Consequently, since $j(u_0+tv_n)\geq j(u_0)$ for $|t|$ small, we have $j'(u_0)(v)=0$
and then by using the assumption (\ref{eq:coercivite}) and Lemma \ref{lem:poincare}, we get

\begin{eqnarray}\label{eq:estimate1}
0\leq
j''(u_0)(v_n,v_n)&\leq&-\alpha|v_{n}|^2_{H^1(\T)} +\gamma|v_{n}|_{H^{1}(\T)}\|v_{n}\|_{H^{s}(\T)} +\beta\|v_{n}\|^2_{H^{s}(\T)}\\
&\leq&
(-\alpha+C\gamma\eps_{n}^{1-s}+C^2\beta(\eps_{n})^{2(1-s)})|v_{n}|^2_{H^1(\mathbb T)}.		\label{eq:estimate2}
\end{eqnarray}
As $\eps_n$ tends to $0$, inequality (\ref{eq:estimate1}) becomes impossible and
proves that ${\Supp}(u_0''+u_{0})$ has no accumulation points in $\T_{in}$.
It follows that $u_0''+u_0$ is a finite sum of positive Dirac masses.
\qed

\begin{remark}\label{rk:corner}
More precisely, we can get an estimate of the number of corners in each connected component $I$ of $\T_{in}$: 
$$\#\{\Supp(u_{0}''+u_{0})\cap I\}\leq \frac{2|I|}{A}+2\;\textrm{ where }\;A^{1-s}:=\frac{-\gamma+\sqrt{\gamma^2+4\alpha\beta}}{2\beta C}$$
($C=\frac{1}{\pi^{1-s}}$ appears in Lemma \ref{lem:poincare}).
Indeed, let us consider three consecutive Dirac masses $\theta_{1},\theta_{2},\theta_{3}$ in $I$. Then
\begin{itemize}
\item if $\beta>0, \gamma\geq 0$, we have
\begin{equation}\label{eq:ecart}
\left(\theta_{3}-\theta_{1}\right)^{1-s}\geq\frac{-\gamma+\sqrt{\gamma^2+4\alpha\beta}}{2\beta C}.
\end{equation}
\item if $\beta=\gamma=0$, then we have a contradiction, that is to say $u_{0}''+u_{0}$ is the some of at most two Dirac masses $I$.
\end{itemize}

To prove this estimate, we define
$v\in H_0^1(\theta_{1},\theta_{3})$ satisfying
$v''+v=\delta_{\theta_2}$ in $(\theta_{1},\theta_{3})$, $v=0$ in $\T\setminus(\theta_{1},\theta_{3})$.
In $\T$, the measure $v''+v$ is supported in $\{\theta_{1},\theta_2,\theta_{3}\}$, and since these points are in ${\Supp}(u_0''+u_{0})$, and $[\theta_{1},\theta_{3}]\subset\T_{in}$, $u_{0}+tv$ is admissible for small $|t|$.
The second order optimality condition and then the assumption (\ref{eq:coercivite}) together with Lemma \ref{lem:poincare} lead to
\begin{eqnarray*}
0\leq
j''(u_0)(v,v)&\leq&-\alpha|v|^2_{H^1(\T)} +\gamma|v|_{H^{1}(\T)}\|v\|_{H^{s}(\T)} +\beta\|v\|^2_{H^{s}(\T)}\\
&\leq&
(-\alpha+C\gamma X+C^2\beta X^2)|v|^2_{H^1(\mathbb T)},
\end{eqnarray*}
where $X=(\theta_{3}-\theta_{1})^{1-s}$, which implies \eqref{eq:ecart}
when $\beta$ is positive, and gives a contradiction if $\beta=\gamma=0$.
\end{remark}
\begin{remark}
When one uses the parametrization of convex sets by the gauge function $u$, $\Om_{u}$ is a polygon if and only if $u''+u$ is a sum of Dirac masses. With the support function (see Section \ref{sect:support}), one has the same characterization. Therefore, the conclusion is the same if we work with the optimization problem \eqref{eq:minh}. Estimate \eqref{eq:ecart} remains valid. However, $\theta_{i}$ is no longer the polar angle of a corner of the shape, 
but is the angle of the normal vectors to the successive segments of the polygonal boundary of the shape.
\hfill$\Box$
\end{remark}

As in Section \ref{sect:reg}, one can also handle problem with the equality constraint.
\subsubsection*{Proof of Theorem \ref{th:conc+constraint}.}
We now need an abstract result for second order optimality conditions. Adapting \cite[Proposition 3.3]{LN} similarly to the first order condition given in Proposition \ref{prop:order1} (this explains the assumption $j'(u_{0})\in( C^0(\T))'$), we get that
there exist $\zeta_0 \in W^{1,\infty}(\T)$ nonnegative, $\mu\in\R^d$ such that
\begin{equation}\label{eq:zeta_0-bis}
\left\{
\begin{array}{l}
\zeta_0=0\;\;\textrm{on}\;\;{\Supp}(u_0''+u_{0})\;\;\mbox{ and }\\[2mm]
\forall\; v\in W^{1,\infty}(\T_{in}),\quad j'(u_0)v +\mu\cdot m'(u_0)v=\displaystyle{
\langle\zeta_0+\zeta_0'',v\rangle_{(W^{1,\infty})'\times W^{1,\infty}}}.
\end{array}
\right.
\end{equation}
Furthermore, 
for all $v\in H^1(\T_{in})$ such that $\exists\lambda\in\R,$ with
$v''+v\geq \lambda(u_0''+u_0)$, and 
$\langle\zeta_0+\zeta_0'',v\rangle-\mu \cdot m'(u_0)(v)=0$,
\begin{equation}\label{eq:ordre2}
j''(u_0)(v,v)+\mu \cdot m''(u_0)(v,v)\geq 0.
\end{equation}
Then we proceed as in the proof of \cite[Theorem 2.1]{LN}. Compared to the first step of the proof of Theorem \ref{th:conc}, we add one degree of freedom introducing 4 functions $v_{n,i}$ on a partition of $(0,\eps_{n})$, and we look for $\lambda_{n,i}, i=1\ldots 4$ such that $v_{n}=\sum_{i=1,4}\lambda_{n,i}v_{n,i}$ satisfies
$$
v'_n(0^+)=v'_n(\eps_n^-)=\mu \cdot m'(u_{0})v_{n}=0.$$
Such a choice of $\lambda_{n,i}$ is always possible as $\lambda_{n,i}$ satisfy three linear equations. Moreover, $v_n$ is not zero and using \eqref{eq:zeta_0-bis}, we get $\int_{\mathbb T}v_n(\zeta_0+\zeta_0'')=0$, which implies
\[
0=j'(u_0)(v_n)=\int_{\mathbb T}v_n(\zeta_0+\zeta_0'')=\mu \cdot m'(u_0)(v_n).
\]
As $v_n''+v_n \geq \lambda (u_0''+u_0)$ for $\lambda\ll 0$, it follows that $v_n$ is eligible for the second order necessary condition \eqref{eq:ordre2}. Then, it follows
\begin{eqnarray*}
0&\leq&
j''(u_0)(v_n,v_n)+\mu \cdot m''(u_0)(v_n,v_n)\leq
-\alpha|v_{n}|^2_{H^1(\T)} + \gamma|v_{n}|_{H^1(\T)}\|v_{n}\|_{H^{s}(\T)}+(\beta+\|\beta'\mu\|)\|v_{n}\|^2_{H^{s}(\T)}\\
&\leq&
(-\alpha+C\gamma\eps_{n}^{1-s}+C^2(\beta+\|\beta'\mu\|)(\eps_{n})^{2(1-s)})|v_{n}|^2_{H^1(\mathbb T)}\label{eq:estimate3}
\end{eqnarray*}
As $n$ tends to $\infty$, the inequality $0\leq j''(u_0)(v_n,v_n)+\mu\cdot m''(u_0)(v_n,v_n)$ becomes impossible and this concludes the proof.\qed
\begin{remark}
An estimate similar to the one in Remark \ref{rk:corner} is not straightforward anymore, since the Lagrange multiplier $\mu$ is unknown.
\end{remark}

\subsection{Examples}\label{sect:examples-polygon}

We analyze the same examples as in Section \ref{sect:examples-smooth}, with $-P$ instead of $P$:

\begin{example}[Negative perimeter penalization]
One can study 
\begin{equation}\label{eq:ex'1}
\min \{J(\Om):=F(|\Om|,E_{f}(\Om),\la_{1}(\Om))-P(\Om)\;/\;\Om\textrm{ convex}, D_{1}\subset\Om\subset D_{2}\}
\end{equation}
where $F:(0,+\infty)\times(-\infty,0)\times(0,+\infty)\to\R$ is $ C^2$, $f\in H^2(\R^2)$, and $D_{1},D_{2}$ are bounded open sets. We can prove the following.

\begin{proposition}\label{prop:ex'1}
If $\Om_{0}$ is an optimal set for the problem \eqref{eq:ex'1}, then each connected component of the free boundary $\partial\Om_{0}\setminus( \partial D_{1}\cup\partial D_{2})$ is polygonal.
\end{proposition}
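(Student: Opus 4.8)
The plan is to apply Corollary \ref{cor:conc} (equivalently Theorem \ref{th:conc}) to the functional $j(u)=J(\Om_u)$ with $J(\Om)=F(|\Om|,E_f(\Om),\la_1(\Om))-P(\Om)$. The set of admissible gauge functions associated to the constraint $D_1\subset\Om\subset D_2$ is of the form \eqref{eq:Fad2} (the $k_i$ being the gauge functions of $D_2$, $D_1$, possibly after the semicontinuous regularization of Remark \ref{k1k2}), and $\T_{in}$ corresponds exactly to the free part $\partial\Om_0\setminus(\partial D_1\cup\partial D_2)$. So everything reduces to checking that $j$ is $\mathcal{C}^2$ near an optimal $u_0$ and that the coercivity-type inequality \eqref{eq:coercivite} holds, i.e.\ that there exist $s\in[0,1)$, $\alpha>0$, $\beta,\gamma\geq 0$ with
$$
j''(u_0)(v,v)\leq -\alpha|v|^2_{H^1(\T)}+\gamma|v|_{H^1(\T)}\|v\|_{H^s(\T)}+\beta\|v\|^2_{H^s(\T)}
$$
for all $v\in W^{1,\infty}(\T)$.

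\textbf{Key steps.} First I would record the $\mathcal{C}^2$-regularity and the shape derivative formulas: by Section \ref{sect:derivative1} (Proposition \ref{prop:dir} and Section \ref{sect:la1}), and by the analogous second-order computations carried out in Section \ref{ssect:dir2}, the maps $u\mapsto a(u)=|\Om_u|$, $u\mapsto e_f(u)=E_f(\Om_u)$, $u\mapsto l_1(u)=\la_1(\Om_u)$ and $u\mapsto p(u)=P(\Om_u)$ are $\mathcal{C}^2$ near $u_0$ (here $\Om_0$ is convex, so the state functions are in $H^2\cap W^{1,\infty}$ by Lemma \ref{l:U->H2}, which is what makes the chain rule work for $W^{1,\infty}$ deformations), hence so is $j=F(a,e_f,l_1)-p$ since $F$ is $\mathcal{C}^2$. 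Then I would split $j''(u_0)(v,v)$ into the contribution of $-p$ and the contribution of $F(a,e_f,l_1)$. The crucial term is $-p''(u_0)(v,v)$: since $p(u)=\int_\T G(\theta,u,u')$ with $G(\theta,u,q)=\sqrt{u^2+q^2}/u^2$ and $G_{qq}=(u^2+q^2)^{-3/2}>0$, differentiating twice and keeping the leading (highest-order-in-$v'$) term gives
$$
-p''(u_0)(v,v)=-\int_\T G_{qq}(\theta,u_0,u_0')\,v'^2+(\text{lower order terms})\leq -\alpha\int_\T v'^2+\beta_1\|v\|^2_{H^1(\T)}\cdot 0 + (\text{terms controlled by }\|v\|_{L^2}\text{ and }\|v\|_{L^2}|v|_{H^1}),
$$
where $\alpha:=\min_{\T\times u_0(\T)\times\conv(u_0'(\T))}G_{qq}>0$ by compactness (recall $\conv(u_0'(\T))$ is a bounded interval since $u_0\in W^{1,\infty}$). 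The lower-order terms coming from $G_{uu}v^2$, $G_{uq}vv'$ etc.\ are bounded by $\gamma_1|v|_{H^1}\|v\|_{L^2}+\beta_1\|v\|^2_{L^2}$. For the $F(a,e_f,l_1)$ part, one uses the chain rule: the first derivatives $a'(u_0), e_f'(u_0), l_1'(u_0)$ are all in $L^\infty(\T)$ (Section \ref{sect:derivative1}), so the "$F'\cdot(\text{second derivative})$" terms like $F_{a}\,a''(u_0)(v,v)$ are again of local type with bounded coefficients and at most quadratic in $v'$, while the "$F''\cdot(\text{first derivative})\otimes(\text{first derivative})$" terms are of the form $F_{**}\big(\ell_1(u_0)v\big)\big(\ell_2(u_0)v\big)$ with $\ell_i(u_0)\in L^\infty$, hence bounded by $C\|v\|^2_{L^2}$. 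The only genuinely delicate point is bounding $e_f''(u_0)(v,v)$ and $l_1''(u_0)(v,v)$ by something like $C\|v\|^2_{H^{1/2}(\T)}+\text{(small)}|v|^2_{H^1}$, or at least by $C\|v\|^2_{H^s}$ plus a term that can be absorbed into $\gamma|v|_{H^1}\|v\|_{H^s}$; this is exactly what the "sharp estimates on the second shape derivative" advertised in Section \ref{ssect:dir2} are designed to provide. Granting those estimates with some $s<1$, one collects everything into \eqref{eq:coercivite} with this $s$, the above $\alpha$, and suitable $\beta,\gamma$, and concludes by Corollary \ref{cor:conc} that each connected component of $(\partial\Om_0)_{in}=\partial\Om_0\setminus(\partial D_1\cup\partial D_2)$ is polygonal.

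\textbf{Main obstacle.} The routine parts are the $\mathcal{C}^2$-regularity and the handling of the local term $-p$ and of the lower-order pieces of $F$. The real work — and the step I expect to be the crux — is establishing that $e_f''(u_0)(v,v)$ and $l_1''(u_0)(v,v)$ are controlled by $\|v\|_{H^s(\T)}$-norms with $s$ strictly below $1$ (so that the negative $-\alpha|v|^2_{H^1}$ from the perimeter strictly dominates after rescaling on shrinking intervals, via the Poincaré inequality of Lemma \ref{lem:poincare}). Because the optimal shape is only known to be convex (no a priori boundary regularity beyond $H^2\cap W^{1,\infty}$ for the state function), these second-derivative estimates cannot rely on smoothness of $\partial\Om_0$ and must be obtained by a careful energy/interpolation argument — this is precisely the content of Section \ref{ssect:dir2}, which I would invoke, and whose estimates yield an admissible exponent such as $s=1/2$.
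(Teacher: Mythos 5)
Your proposal follows the paper's proof essentially verbatim: apply Corollary \ref{cor:conc}, extract the strict concavity term $-\alpha|v|^2_{H^1(\T)}$ from $-p''(u_0)(v,v)$ via Proposition \ref{p:a'',p''}, and control the $F(|\Om|,E_f(\Om),\la_1(\Om))$ part by the chain rule combined with the second-derivative estimates of Section \ref{ssect:dir2} (Proposition \ref{prop:dir2}, its analogue for $\la_1$, and the volume bound). One small correction: since Proposition \ref{prop:dir2} bounds $e_f''(u_0)(v,v)$ by $\|v\|^2_{H^{1/2}(\T)}+\|v\|^2_{L^\infty(\T)}$ and $H^{1/2}(\T)$ does not embed into $L^\infty(\T)$, the admissible exponent is $s=1/2+\eps$ with $\eps\in(0,1/2)$ rather than $s=1/2$; this is harmless because Lemma \ref{lem:poincare} and hypothesis \eqref{eq:coercivite} only require $s<1$.
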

\begin{proof}
The proof is a direct consequence of Corollary \ref{cor:conc} and of the estimates given in Section \ref{ssect:dir2}. Indeed, Proposition \ref{prop:dir2} for $E_f(\Omega)$, the similar result for $\lambda_{1}$ (See Section \ref{ssect:la1''}) and Proposition \ref{p:a'',p''} for the volume, imply
$$|r''(0)|\leq C \|v\|_{H^{1/2+\eps}(\T)}^2,$$
where $r(t)=F(|\Om_{t}|,E_{f}(\Om_{t}),\la_{1}(\Om_{t}))$,  $\Om_{t}=\Om_{u_0+tv}$ and $\eps\in (0,\frac{1}{2})$. Next, the estimate for the perimeter in Proposition \ref{p:a'',p''} provides the concavity condition. 
\end{proof}

\begin{remark}
As in Remark \ref{rk:constraint}, if we consider problems of type \eqref{eq:ex'1} where the constraint $D_{1}\subset\Om\subset D_{2}$ can be dropped, then the solution is a polygon.
\end{remark}
\end{example}

\begin{example}[Volume constraint and negative perimeter penalization]
We can also consider a similar problem with a volume constraint:
\begin{equation}\label{eq:ex'2}
\min \{J(\Om):=F(E_{f}(\Om),\la_{1}(\Om))-P(\Om)\;/\;\Om\textrm{ convex}, \textrm{ and }|\Om|=V_{0}\}
\end{equation}
where $V_{0}\in(0,+\infty)$. Again, Corollary \ref{cor:conc} applies and leads to the fact that any optimal shape of \eqref{eq:ex'2} is a polygon.
\end{example}

\begin{example}\label{ex:ex'3}[Perimeter constraint]
We consider again a problem with a perimeter constraint, as in Example \ref{ex:perimeter}
\begin{equation}\label{eq:ex'3}
\min \{J(\Om):=F(|\Om|,E_{f}(\Om),\la_{1}(\Om))\;/\;\Om\textrm{ convex}, \textrm{ and }P(\Om)=P_{0}\}
\end{equation}
where $P_{0}\in(0,+\infty)$.
The optimality conditions are written for $F(|\Om|,E_{f}(\Om),\la_{1}(\Om))+\mu P(\Omega)$, where $\mu$ is a Lagrange multiplier for the constraint $P(\Omega)=P_{0}$, so if we prove that $\mu<0$, then the strategy of this section applies, and we get that any optimal shape is polygonal.

\end{example}

\subsection{Computations and estimates of second order shape derivatives}\label{sect:derivative2}

\subsubsection{Volume and perimeter}\label{sect:a'',p''}
Let $a(u)$, $p(u)$ be the area and perimeter functionals, see \eqref{e:a,p}.
\begin{proposition}\label{p:a'',p''}
Let $0<u\in W^{1,\infty}(\T)$. 
Then $a$ and $p$ are twice differentiable around $u$ in $W^{1,\infty}(\T)$ and there exists some real numbers $\beta_{1},\beta_{2},\beta_{3}, \gamma$ and $\alpha>0$ (depending on $u$)  such that, $\forall v\in W^{1,\infty}(\T)$
\begin{equation}\label{e:a'',p''}
\left\{
\begin{array}{l}
|a''(u)(v,v)|\leq \beta_{1}\|v\|^2_{L^2(\T)}\\[3mm]
\alpha|v|^2_{H^{1}(\T)}-\gamma|v|_{H^{1}(\T)}\|v\|_{L^2(\T)}-\beta_{2}\|v\|_{L^2(\T)}^2\leq p''(u)(v,v)\leq \beta_{3}\|v\|^2_{H^1(\T)}
\end{array}
\right.
\end{equation}
\end{proposition}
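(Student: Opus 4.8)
The plan is to compute the second derivatives explicitly from the integral formulas \eqref{e:a,p} and then estimate each term, keeping careful track of which norm controls which expression. Since $u\in W^{1,\infty}(\T)$ is fixed and bounded away from $0$, all the integrands are smooth functions of $(u,u')$ on the relevant compact range, so twice-differentiability in $W^{1,\infty}(\T)$ is routine and I would dispatch it quickly by differentiating under the integral sign. For the area, $a(u)=\int_\T \tfrac{1}{2}u^{-2}\,d\theta$, so $a''(u)(v,v)=\int_\T 3u^{-4}v^2\,d\theta$, which is immediately bounded by $\beta_1\|v\|_{L^2(\T)}^2$ with $\beta_1=3\|u^{-4}\|_{L^\infty}$; this gives the first line of \eqref{e:a'',p''}. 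Note no $v'$ appears, which is why only the $L^2$ norm enters.

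For the perimeter, $p(u)=\int_\T G(\theta,u,u')\,d\theta$ with $G(\theta,u,q)=u^{-2}\sqrt{u^2+q^2}$, so the second derivative has the generic form
\[
p''(u)(v,v)=\int_\T \left( G_{qq}\,v'^2 + 2G_{uq}\,v v' + G_{uu}\,v^2\right)d\theta .
\]
The key structural fact, already noted in Section \ref{ssect:volper}, is that $G_{qq}=(u^2+q^2)^{-3/2}$ evaluated at $q=u'$, which on the compact range of $(u(\theta),u'(\theta))$ is bounded below by a positive constant $\alpha_0>0$ and above by some constant. The cross term and the pure-$v$ term have bounded coefficients as well. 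Hence the upper bound: $p''(u)(v,v)\le \|G_{qq}\|_\infty\|v'\|_{L^2}^2 + 2\|G_{uq}\|_\infty\|v\|_{L^2}\|v'\|_{L^2}+\|G_{uu}\|_\infty\|v\|_{L^2}^2\le\beta_3\|v\|_{H^1(\T)}^2$. For the lower bound, $\int_\T G_{qq}v'^2\ge \alpha_0|v|_{H^1(\T)}^2$ (recall $|v|_{H^1(\T)}^2=\int_\T v'^2$), while the remaining two terms are bounded below by $-\gamma|v|_{H^1(\T)}\|v\|_{L^2(\T)}-\beta_2\|v\|_{L^2(\T)}^2$, absorbing the cross term via $|2G_{uq}vv'|\le 2\|G_{uq}\|_\infty|v'||v|$ pointwise and Cauchy--Schwarz. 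Setting $\alpha=\alpha_0$ yields the second line of \eqref{e:a,p}.

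I do not expect a serious obstacle here; the proposition is essentially a bookkeeping exercise. The one point requiring mild care is justifying that the derivatives may indeed be taken in the $W^{1,\infty}(\T)$ topology and that the coefficient functions $G_{qq},G_{uq},G_{uu}$ evaluated along $(u,u')$ are genuinely in $L^\infty(\T)$ — this uses precisely that $u\in W^{1,\infty}$ so $u'\in L^\infty$, together with $u\ge\min u>0$, so that $(u(\theta),u'(\theta))$ stays in a fixed compact subset of $(0,\infty)\times\R$ on which $G$ and its derivatives are smooth and bounded. The positivity of $\alpha$ is then exactly the strict positivity of $G_{qq}=(u^2+u'^2)^{-3/2}\ge (\|u\|_\infty^2+\|u'\|_\infty^2)^{-3/2}>0$. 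Everything else is Cauchy--Schwarz.
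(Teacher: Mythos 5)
Your computation is correct and is exactly the ``easy computations, using formulas of Section \ref{ssect:volper}'' that the paper's one-line proof alludes to: differentiate the explicit integrands, use that $(u,u')$ ranges in a compact subset of $(0,\infty)\times\R$ so all coefficients are in $L^\infty(\T)$, and isolate the coercive term $\int_\T G_{qq}\,v'^2$ with $G_{qq}=(u^2+u'^2)^{-3/2}$ bounded below. No gap; the only blemish is the harmless typo referencing \eqref{e:a,p} instead of \eqref{e:a'',p''} at the end.
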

\begin{proof}
This is done by easy computations, using formulas of Section \ref{ssect:volper}.
\end{proof}

\subsubsection{The Dirichlet energy - Torsional rigidity}\label{ssect:dir2}

We now analyze the second order derivative of $e_f(u)=E_{f}(\Om_{u})$ introduced in Section \ref{sect:derivative1}. The main result is the following.
\begin{proposition}\label{prop:dir2}
Assume $\Om_0:=\Om_{u_0}$, $u_0>0$, $u_0''+u_0\geq0$, $f\in H^2_{loc}(\R^2)$. Then $e_{f}$ is $ C^2$ in a neighborhood of $u_0$ 
(in $W^{1,\infty}(\T)$).
Furthermore, there exist $\beta_{1},\beta_{2}$ positive such that,
for all $v\in W^{1,\infty}(\T)$, 
\begin{eqnarray}
|e_f'(u_0)v|
&\leq&
\beta_{1}\|v\|_{L^2(\T)},			\label{e:|e_f'|<}\\
|e_f''(u_0)(v,v)|
&\leq&
\beta_{2}(\|v\|_{H^{1/2}(\T)}^2+\|v\|_{L^\infty(\T)}^2).	\label{e:|e_f''|<}
\end{eqnarray}
\end{proposition}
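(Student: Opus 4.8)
The plan is to reduce everything to the shape-derivative formulas for $E_f$ in the moving-frame (Lagrangian) framework already set up in Section \ref{sect:dir}. Recall from Lemma \ref{l:hU->Ck} that $u\mapsto\hat U_u\in H^1(\Omega_0)$ is $C^2$ near $u_0$ (here we use $f\in H^2_{loc}$, so $k=2$), hence $e_f=\mathcal E_{f,\Omega_0}\circ\xi$ is $C^2$ near $u_0$ in $W^{1,\infty}(\T)$, which gives the first assertion once $\xi$ is chosen $C^2$ (the polar extension \eqref{eq:xi-radial}). The bound \eqref{e:|e_f'|<} is essentially already in Proposition \ref{prop:dir}: from \eqref{e:e'(u)(v)-2}, $e_f'(u_0)v=\int_\T\frac12|\nabla U_0(x_\theta)|^2\,v(\theta)u_0(\theta)^{-3}\,d\theta$, and since $\Omega_0$ is convex Lemma \ref{l:U->H2} gives $U_0\in W^{1,\infty}(\Omega_0)$, so $|\nabla U_0|^2 u_0^{-3}\in L^\infty(\T)\subset L^2(\T)$ and Cauchy--Schwarz yields \eqref{e:|e_f'|<}.

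The substantial part is \eqref{e:|e_f''|<}. First I would differentiate \eqref{e:ef'(u)(v)} once more in the direction $v$, at $u=u_0$, using \eqref{e:S0',S0''}. This produces $e_f''(u_0)(v,v)$ as an integral over $\Omega_0$ of an expression that is quadratic in the first-order quantities $\hat U_0'$, $U_0'$, $\nabla U_0$, $\xi'(u_0)(v)$, together with a term containing the second-order transport quantity $S''(u_0)(v,v)=2\nabla\xi'(u_0)(v)\cdot\xi'(u_0)(v)-\xi''(u_0)(v,v)$; on $\partial\Omega_0$ we know $\xi'(u_0)(v)=-v u_0^{-2}e^{i\theta}$ and $\xi''(u_0)(v,v)=2v^2u_0^{-3}e^{i\theta}$ from \eqref{e:xi'(u),xi''(u)}. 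After an integration by parts (using $-\Delta U_0=f$ and $U_0'$ harmonic by \eqref{e:Delta Ut'}), I expect the bulk terms to collapse into a boundary integral of the classical Hadamard-type form plus a term $\int_{\Omega_0}(\cdots)f$, with the whole second derivative controlled by $\|U_0\|_{H^2(\Omega_0)}$, $\|U_0'\|_{H^1(\Omega_0)}$, $\|f\|_{L^p(\Omega_0)}$ and the field $\xi'(u_0)(v)$. The key point is then the estimate
$$\|U_0'\|_{H^1(\Omega_0)}\le C\,\|v\|_{H^{1/2}(\T)},$$
together with $\|U_0'\|_{L^2(\Omega_0)}\le C\|v\|_{L^2(\T)}$, which I would get by testing the equations in \eqref{e:Delta Ut'}--\eqref{e:bcUt'}: $U_0'$ is the harmonic extension of the boundary datum $-\nabla U_0\cdot\xi'(u_0)(v)=(\partial_\nu U_0)\,(v u_0^{-2})\,(\nu\cdot e^{i\theta})\cdots$ on $\partial\Omega_0$, i.e. a multiple of $v$ times a bounded ($W^{1,\infty}(\Omega_0)$-type, via $\nabla U_0$) factor; its $H^{1}(\Omega_0)$-norm is then bounded by the $H^{1/2}(\partial\Omega_0)$-norm of that datum, hence by $\|v\|_{H^{1/2}(\T)}$ after absorbing the factor $\partial_\nu U_0\in W^{1,\infty}$ — or, failing that last multiplier bound, by $\|v\|_{H^{1/2}(\T)}+\|v\|_{L^\infty(\T)}$, which is exactly the right-hand side of \eqref{e:|e_f''|<}. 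Finally the remaining terms (the $S''$-term and the terms quadratic in $\xi'(u_0)(v)$ on the boundary) involve $v^2$ times $W^{1,\infty}(\Omega_0)$ quantities, hence are bounded by $\|v\|_{L^\infty(\T)}^2$. Collecting, one obtains \eqref{e:|e_f''|<}.

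\textbf{Main obstacle.} The delicate step is the $H^1(\Omega_0)$-estimate for $U_0'$: since $\Omega_0$ is merely convex (corners allowed), one cannot invoke smooth elliptic regularity up to the boundary, and the boundary datum $\nabla U_0\cdot\xi'(u_0)(v)$ on $\partial\Omega_0$ lives only in a space no better than $H^{1/2}$ because $\nabla U_0\in H^{1}(\Omega_0)$ has trace in $H^{1/2}(\partial\Omega_0)$ (Lemma \ref{l:U->H2} gives $U_0\in H^2\cap W^{1,\infty}$, which is what makes the multiplication by $v$ and the trace estimate work, but one has to handle the product of an $H^{1/2}$ function with a Lipschitz function carefully on a Lipschitz curve, and control the dependence of all constants on $\Omega_0$ only through $u_0$). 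This is precisely why the statement settles for the slightly non-sharp combination $\|v\|_{H^{1/2}(\T)}^2+\|v\|_{L^\infty(\T)}^2$ rather than $\|v\|_{H^{1/2}(\T)}^2$ alone, and why the hypothesis $f\in H^2_{loc}$ (ensuring $U_0\in H^2$, not merely $H^1$) is used.
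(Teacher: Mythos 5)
Your treatment of the $C^2$-differentiability and of \eqref{e:|e_f'|<} matches the paper and is fine. For \eqref{e:|e_f''|<}, however, there are two genuine gaps. The first is the step where you ``expect the bulk terms to collapse into a boundary integral of the classical Hadamard-type form'': this is precisely what cannot be done here. The Hadamard boundary expression involves the traces of $D^2U_0$ and of $\nabla U_0'$ on $\partial\Omega_0$, and for a merely convex $\Omega_0$ (with $U_0\in H^2(\Omega_0)$ only, and $U_0'$ just harmonic in $L^2$) these traces are not defined. The paper's proof is organised around avoiding exactly this: it keeps $e''(0)$ in the interior form of Lemma \ref{l:e''(0)=}, splits $U_0=U_1-U_2$ according to the sign of $f$, works on the level sets $\Omega^i_\eps=\{U_i>\eps\}$ (which are $C^1$ for a.e.\ $\eps$ by Sard), and then uses the ad hoc integration-by-parts identity of Lemma \ref{l:div-th} — the $\nabla^\perp$ trick — to rewrite the dangerous boundary integrals $\int\partial_\nu U_i\,(g\cdot\nabla V)$ with $V=U_0'$ or $V=\partial_j U_i$ as interior integrals controlled by $\|U_i\|_{H^2}$, $\|\nabla U_i\|_{L^\infty}$, $\|V\|_{H^1}$ and $\|\Delta V\|_{L^2}$, uniformly in $\eps$. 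Without some substitute for this lemma your integration by parts has no meaning on the actual domain. (By contrast, the estimate $\|U_0'\|_{H^1(\Omega_0)}\le C\|v\|_{H^{1/2}(\T)\cap L^\infty(\T)}$, which you single out as the main obstacle, is the comparatively easy part: $U_0'$ is the harmonic extension of $-\xi'(0)\cdot\nabla U_0$, and the algebra property of $H^{1/2}\cap L^\infty$ on $\partial\Omega_0$ does the job — this is also where the $\|v\|_{L^\infty}^2$ term genuinely enters.)

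The second gap is the dependence of the resulting bound on the \emph{extension} of the deformation field. The interior expression for $e''(0)$ (your $S''$-term, and the right-hand side of \eqref{J} applied with $g=\xi'(0)$) contains $\|\nabla \xi'(0)\|_{L^2(\Omega_0)}$. With the radial extension \eqref{eq:xi-radial} that you invoke, $\nabla\xi'(0)$ involves $v'$, so this quantity is only controlled by $\|v\|_{H^1(\T)}$, and your argument would deliver $|e_f''(u_0)(v,v)|\le C\|v\|_{H^1(\T)}^2$ — too weak for the application in Proposition \ref{prop:ex'1}, where the $H^{1/2}$ exponent is what makes the perimeter dominate. The paper resolves this by noting that $e''(0)$ does not depend on the choice of extension, establishing the estimate \eqref{e:|e''(0)|<|nablaxi|} for every Lipschitz extension $w$ of the boundary datum, and then passing to the harmonic extension $w_0$ by an approximation argument, so that $\|\nabla w_0\|_{L^2(\Omega_0)}\le C\|v\|_{H^{1/2}(\T)}$. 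Your proposal never addresses how the interior gradient of the transport field is to be controlled by only the $H^{1/2}$-norm of $v$, so as written it cannot reach \eqref{e:|e_f''|<}.
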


The differentiability of $e_f$ and the estimate \eqref{e:|e_f'|<} follow easily from Proposition \ref{prop:dir}.
The estimate \eqref{e:|e_f''|<} is easy to prove when working with smooth sets and one can then even drop the $L^\infty$ term. 
However, this result is more difficult for a general convex set and the rest of this section is devoted to its proof.\\

Let $v$ be given as in Proposition \ref{prop:dir2}.
To prove the estimate \eqref{e:|e_f''|<}, it is appropriate to consider a transformation $\xi$ such that
\begin{equation}\label{e:xi(t)}
 \xi\in  C^2((-\eta,\eta),W^{1,\infty}(\R^2,\R^2)),\;\eta\in (0,1), \;\qquad
 \xi(t)=\left(\frac{1}{u_0+tv}-\frac{1}{u_0}\right)e^{i\theta}\quad \mbox{on }\ \partial\Omega_0.
\end{equation}
Then, we will differentiate twice $t\in(-\eta,\eta)\to e(t)=E(\Omega_{u_0+tv})$. We will use the following notation and identities:
\begin{equation}\label{e:*t}
\Omega_t:=\Omega_{u_0+tv},\quad
U_t:=U_{\Omega_{u_0+tv}},\quad
\hat{U}_t:=U_t\circ(I+\xi(t)),\quad
e(t):=E(\Omega_{u_0+tv}).
\end{equation}
Note that $e(t)=e_f(u_0+tv)=\mathcal E_{f,\Omega_0}(\xi(t))$ and we have
\begin{eqnarray}
e'(0)
=
e_f'(u_0)(v)
&=&
\mathcal E'_{f,\Omega_0}(0)(\xi'(0)),		\label{e:ef'=calE'}\\
e''(0)
=
e''_f(u_0)(v,v)
&=&
\mathcal E''_{f,\Omega_0}(0)(\xi'(0),\xi'(0))
+
\mathcal E'_{f,\Omega_0}(0)(\xi''(0)).		
\end{eqnarray}

In the smooth case, $e''(0)$ can be written in terms of boundary integrals, which involve in particular the boundary trace of $D^2U_0$ and $\nabla U'_0$.
These terms are not well defined in the non-smooth setting (even in the case $\Omega_0$ convex).
To overcome this difficulty, our strategy will be to write all non-smooth terms of $e''(0)$ as ``interior'' integrals in $\Om_{0}$.\\ 

\noindent
{\bf Estimate of  $e''(0)$:}
\noindent
Note that we have proven in Section \ref{sect:derivative1} that $e_f$ is $ C^2$ if $f\in H^2_{loc}(\R^2)$) (so, $e$ is $ C^2$).
We remind the following classical formulation of $e''(0)$ 
\begin{lemma}\label{l:e''(0)=}
Let $f\in H^2_{loc}(\mathbb R^2)$ and
$\xi\in  C^2(\R;W^{1,\infty}(\R^2;\R^2))$ near $0$.
Then we have
\begin{equation}\label{e:e''(0)=}
e''(0)
=
-\frac{1}{2}
\left(
\int_{\Omega_0} f U_0''  
+
\int_{\partial\Omega_0} f U_0' (\xi'(0)\cdot\nu_0)
\right),
\end{equation}
where $\hat{U}_0'':=\hat{U}_u''(u_0)(v,v)$ and $U_0''$ is defined by 
\begin{equation}\label{e:U''}
U''_0
:=
\hat{U}_0'' -
\left(
2\nabla U_0'\cdot \xi'(0) + 
\xi'(0)\cdot D^2U_0\cdot\xi'(0) + 
\nabla U_0\cdot \xi''(0)
\right)\quad\textrm{ in }\;\;\Omega_0,
\end{equation}
and satisfies
\begin{equation}
U_0''\in L^2(\Omega_0),\quad
\Delta U_0''=0\;\; \textrm{ in }\; \mathcal D'(\Omega_0).		\label{e:hU0''}
\end{equation}
\end{lemma}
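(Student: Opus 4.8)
The plan is to obtain the formula \eqref{e:e''(0)=} by twice differentiating the identity $e(t) = -\frac12\int_{\Omega_t} f\, U_t$, after pulling everything back to the fixed domain $\Omega_0$ via the change of variables $x = (I+\xi(t))(y)$. First I would write $e(t) = -\frac12\int_{\Omega_0} (f\circ(I+\xi(t)))\, \hat U_t\, J_t$, where $J_t = \det(I + D\xi(t))$ is the Jacobian; since $f\in H^2_{loc}(\R^2)$ and $\xi\in C^2((-\eta,\eta);W^{1,\infty}(\R^2;\R^2))$, each of the three factors $f\circ(I+\xi(t))$, $\hat U_t$ (by Lemma \ref{l:hU->Ck}, using $f\in H^2_{loc}$ hence $C^2$ dependence) and $J_t$ is $C^2$ in $t$ with values in suitable spaces ($L^2$ or $L^\infty$), so the product rule applies and $e$ is $C^2$ near $0$. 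Differentiating once and setting $t=0$ recovers \eqref{e:ef'=calE'}, i.e. $e'(0) = -\frac12\int_{\Omega_0}(\nabla f\cdot\xi'(0) + f\,\mathrm{div}\,\xi'(0))U_0 - \frac12\int_{\Omega_0} f\,\hat U_0'$; using $\hat U_0' = U_0' + \nabla U_0\cdot\xi'(0)$ and integrating by parts, this collapses to the known boundary form $-\frac12\int_{\partial\Omega_0}|\nabla U_0|^2(\xi'(0)\cdot\nu_0)$ as in Proposition \ref{prop:dir}.

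The second differentiation is the heart of the computation. Differentiating $e(t)$ twice and evaluating at $t=0$ produces a sum of six terms: the pure Jacobian/$f$-composition second derivatives acting on $U_0$, the cross terms pairing first derivatives of $f\circ(I+\xi)$, $J_t$ with $\hat U_0'$, and the term $-\frac12\int_{\Omega_0} f\,\hat U_0''$. The strategy is to express the "genuinely second-order interior" object $\hat U_0''$ in terms of the quantity $U_0''$ defined in \eqref{e:U''}, namely $\hat U_0'' = U_0'' + 2\nabla U_0'\cdot\xi'(0) + \xi'(0)\cdot D^2 U_0\cdot\xi'(0) + \nabla U_0\cdot\xi''(0)$, and then to reorganize and integrate by parts. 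Here I would use that $\Delta U_0' = 0$ in $\mathcal D'(\Omega_0)$ (Lemma \ref{l:hU->Ck} ii)) together with $-\Delta U_0 = f$ to handle the terms involving $\nabla U_0'$ and $D^2 U_0$; the key algebraic miracle, as in the classical derivation of the second shape derivative (cf. \cite{HP}), is that all the bulk contributions cancel except for $-\frac12\int_{\Omega_0} f\,U_0''$, while the leftover boundary terms coming from the integrations by parts combine into the single trace term $-\frac12\int_{\partial\Omega_0} f\, U_0'(\xi'(0)\cdot\nu_0)$. One must be slightly careful that $U_0'$ has a well-defined $L^2(\partial\Omega_0)$ trace: since $\Omega_0$ is convex and $u_0''+u_0\ge 0$, part iii) of Lemma \ref{l:hU->Ck} gives $U_0'\in H^1(\Omega_0)$, so its boundary trace is legitimate and the boundary integral in \eqref{e:e''(0)=} makes sense.

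Finally, to prove \eqref{e:hU0''}, I would argue exactly as in the proof of Lemma \ref{l:hU->Ck} ii): starting from $\int_{\Omega_0}(\hat U_t\circ S(t))\Delta\varphi - f\varphi = 0$ for $\varphi\in\mathcal D(\Omega_0)$ and all $t$ near $0$, differentiate \emph{twice} in $t$, set $t=0$, and use the formulas \eqref{e:S0',S0''} for $S'(u_0)$ and $S''(u_0)$; the second derivative of $\hat U_t\circ S(t)$ against $\Delta\varphi$ integrates to zero, and unwinding the chain rule shows that the combination defining $U_0''$ in \eqref{e:U''} is precisely $\Delta$-harmonic in $\mathcal D'(\Omega_0)$, while $U_0''\in L^2(\Omega_0)$ follows from the $C^2$ regularity in part i) of Lemma \ref{l:hU->Ck} together with $U_0\in H^2(\Omega_0)$ (Lemma \ref{l:U->H2}) and $U_0'\in H^1(\Omega_0)$. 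The main obstacle I anticipate is purely bookkeeping: tracking the six boundary terms produced by the integrations by parts and verifying that the non-smooth ones (those that would involve $D^2 U_0$ or $\nabla U_0'$ on $\partial\Omega_0$, which are \emph{not} defined for a general convex set) genuinely cancel in the bulk rather than surviving on the boundary — this cancellation is what makes the right-hand side of \eqref{e:e''(0)=} well-defined, and it is the whole point of routing $\hat U_0''$ through the auxiliary interior function $U_0''$.
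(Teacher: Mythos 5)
Your proposal is correct, and the harmonicity/$L^2$ part of your argument (differentiate $\int_{\Omega_0}(\hat U_u\circ S(u))\Delta\varphi - f\varphi=0$ twice in $u$ and use \eqref{e:S0',S0''}) is exactly what the paper does. Where you diverge is in the derivation of \eqref{e:e''(0)=}: the paper does not pull the volume integral back to the fixed domain and differentiate the triple product $(f\circ(I+\xi(t)))\,\hat U_t\, J_t$ twice; instead it differentiates once more the already-established first-derivative formula \eqref{e:ef'(u)(v)}, which is an integral over the \emph{moving} domain $\Omega_u$ whose integrand at $u$ is essentially $-\tfrac12 f\,U_u'$. Applying the Hadamard formula for moving-domain integrals (Corollaire 5.2.5 of \cite{HP}) to that expression produces the boundary term $-\tfrac12\int_{\partial\Omega_0}fU_0'(\xi'(0)\cdot\nu_0)$ immediately, and the bulk term collapses to $-\tfrac12\int_{\Omega_0}fU_0''$ upon substituting \eqref{e:Ut'} and \eqref{e:U''} — no divergence-form reorganization is needed. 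Your fixed-domain route is legitimate but costs more: after substituting $\hat U_0''$ and $\hat U_0'$ the integrand splits into $fU_0''$, plus $\operatorname{div}(fU_0\,\xi''(0))$ (whose boundary contribution vanishes since $U_0=0$ there), plus $2\operatorname{div}(fU_0'\,\xi'(0))$, plus a residue expressible in $g=fU_0$ whose integral equals $\int_{\partial\Omega_0}f(\nabla U_0\cdot\xi'(0))(\xi'(0)\cdot\nu_0)$; only after invoking the boundary condition $U_0'=-\nabla U_0\cdot\xi'(0)$ on $\partial\Omega_0$ (i.e. \eqref{e:bcUt'}) do the contributions $2\int_{\partial\Omega_0} fU_0'(\xi'(0)\cdot\nu_0)$ and $-\int_{\partial\Omega_0} fU_0'(\xi'(0)\cdot\nu_0)$ recombine into the single boundary term of \eqref{e:e''(0)=} with the correct coefficient. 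That recombination is the one piece of bookkeeping you should make explicit, since a naive reading of ``all bulk terms cancel except $-\tfrac12\int fU_0''$'' would suggest a factor $2$ on the boundary term; everything else in your plan goes through, and your use of Lemma \ref{l:hU->Ck} iii) to guarantee that $U_0'$ has a trace is the right justification for the boundary integral to make sense.
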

{\bf Proof}.
Differentiating \eqref{e:ef'(u)(v)} at $u=u_0$ (see Corollaire 5.2.5, \cite{HP})
and then using \eqref{e:S0',S0''} gives
\begin{eqnarray}
e''(0)
&=&
-\frac{1}{2}\int_{\Om_0}
\left(
\hat{U}_0''-
2\nabla\hat{U}'_0\cdot\xi'(0)+
\xi'(0)\cdot D^2U_0\cdot\xi'(0)+
\nabla\hat{U}_0\cdot(2\nabla\xi'(0)\cdot\xi'(0)-\xi''(0))
\right)f,	
\nonumber\\
&&
-\frac{1}{2}\int_{\partial\Omega_0}U'_0f(\xi'(0)\cdot\nu_0).
\label{e:ef''-hats}
\end{eqnarray}
After replacing $\hat{U}_0'=U_0'-\nabla U_0\cdot\xi'(0)$, (\ref{e:ef''-hats}) gives
\eqref{e:e''(0)=}.

Clearly $U_0''\in L^2(\Omega_0)$. To prove that $\Delta U_0''=0$ we differentiate
\eqref{e:inthuLapphi'} at $u=u_0$ and use \eqref{e:S0',S0''}. Then we obtain
\begin{eqnarray*}
\int_{\Om_0}
\left(
\hat{U}_0''-
2\nabla\hat{U}_0'\cdot\xi'(0)+
\xi'(0)\cdot D^2U_0\cdot\xi'(0)+
\nabla U_0\cdot(2 \nabla\xi'(0)\cdot\xi'(0)-\xi''(0))
\right)
\Delta\varphi
=0.
\end{eqnarray*}
Replacing $\hat{U}_0'$ as given by \eqref{e:Ut'} gives $\int_{\Omega_0}U_0''\Delta\varphi=0$, which proves \eqref{e:hU0''}.
\hfill$\Box$\\

\subsubsection*{Proof of Proposition \ref{prop:dir2}.}
We will often write $\xi,\xi',\xi''$ for $\xi(0),\xi'(0), \xi''(0)$. Let us rewrite \eqref{e:e''(0)=} in the form  $e''(0)=\frac{1}{2}(I_1+I_2)$.
The second term $I_2$ is easy to estimate: from (\ref{e:Ut'}) we have
\begin{eqnarray}
I_2
&:=&
-
\int_{\partial\Omega_0}f U_0'(\xi'\cdot\nu_0)
=
\int_{\partial\Omega_0}f\partial_{\nu_0}U_0 (\xi'\cdot\nu_0)^2\leq C\|\xi'\|^2_{L^2(\partial\Omega)},
\quad
C=C(\|f\|_{L^\infty(\Omega_0)},\|U_0\|_{W^{1,\infty}(\Omega_0)}).	\label{e:e''-I2}
\end{eqnarray}
The first term $I_{1}=\int_{\Omega_0}U_0''\Delta U_0$ requires more investigation. To go around the non regularity of $\Omega_0$, we introduce 
$$U_0=U_1-U_2, U_i\in H^1_0(\Omega_0), -\Delta U_1=f^+,\;\;-\Delta U_2=f^-,\;\;U_i> 0\;{\rm on\;} \Omega_0.$$
Recall that $U_i\in W^{1,\infty}(\Omega_0)\cap H^2(\Omega_0)$. We will compute on the level sets $\Omega^i_\eps:=\{x\in\Omega_0, U_i(x)>\epsilon\}$ (only on one of them if $f^+\equiv 0$ or $f^-\equiv 0$). Indeed, by Sard's theorem, the $\Omega_\eps^i$  are at least $C^1$ for a.e. $\eps$. By strict positivity of $U_i$, $\lim_{\eps\to 0} \mathbbmss{1}_{\Omega_\eps^i}=\mathbbmss{1}_{\Omega_0}$, so that
$$I_1=\lim_{\eps\to 0} \int_{\Omega^1_\eps} U_0''\Delta U_1-\int_{\Omega^2_\eps}U_0''\Delta U_2.$$
Note that $U'_0, U''_0\in  C^\infty_{loc}(\Omega_0)$  and as $f\in H^2_{loc}(\mathbb R^2)$ we have  
$U_0\in H^4_{loc}(\Omega_0)$.
 We obtain
\begin{eqnarray}
 \int_{\Omega^i_\eps}U_0''\Delta U_i
 =
 \int_{\partial\Omega_\eps^i} U''_0\partial_{\nu_\eps} U_i 
 &=&
 \int_{\partial\Omega_\eps} 
 \hat{U}''_0\partial _{\nu_\eps} U_i
 -
 2(\nabla U'_0\cdot \xi')\partial_{\nu_\eps} U_i
 -
 (\xi'\cdot D^2U_0\cdot\xi')\partial_{\nu_\eps} U_i
 -
 (\nabla U_0\cdot\xi'')\partial_{\nu_\eps} U_i			\nonumber\\
&=:&
I_{1}^\eps+I_{2}^\eps + I_{3}^\eps + I_{4}^\eps.			\label{e:e''-I11234}
\end{eqnarray}
For the term $I_{1}^\eps$, we have
\begin{eqnarray}
 I_{1}^\eps
 &=&
 \int_{\Omega_\eps^i}
 \hat{U}''_0\Delta U_i + \nabla \hat{U}''_0\cdot\nabla U_i 	
 \xrightarrow{\eps\to0}
 \int_{\Omega_0}
 \hat{U}''_0\Delta U_i + \nabla \hat{U}''_0\cdot\nabla U_i 
 =
\int_{\partial\Omega_0}\hat{U}''_0\partial_{\nu_0}U_i
 = 0.							\label{e:e''-I11}
\end{eqnarray}

To deal with $I_{2}^\eps$ and $I_{3}^\eps$, we 
will need the following generalized formula of integration by parts. 
\begin{lemma}\label{l:div-th}
Let $\Omega$ be a $ C^1$ open set, $U\in W^{1,\infty}_{0}(\Omega)\cap H^2(\Omega)$,  
$V\in H^1(\Omega)\cap\{\Delta V\in L^2(\Omega_0\}$, $g\in W^{1,\infty}(\Omega;\R^2)$. 
Then
\begin{equation}\label{e:div-th)}
J:=\int_{\partial\Omega}\partial_\nu U(g\cdot\nabla V)
=
\int_{\Omega}
\nabla(\nabla U\cdot g)\cdot \nabla V
+
(\nabla U\cdot g)\Delta V
-
\nabla (\nabla^\perp U\cdot g)\cdot \nabla^\perp V,
\end{equation}
where the operator $\perp$ acts on a vector and is defined by $^\perp(a_1,a_2)=(-a_2,a_1)$. As a consequence
\begin{equation}\label{J}
|J|\leq \|\nabla U\|_{L^\infty(\Omega)}\|g\|_{L^2(\Omega)}\|\Delta V\|_{L^2(\Omega)}+2\left\{\|V\|_{H^1(\Omega)}\left[\|U\|_{H^2(\Omega)}\|g\|_{L^\infty(\Omega)}+\|\nabla U\|_{L^\infty(\Omega)}\|\nabla g\|_{L^2(\Omega)}\right].\right\}
\end{equation}
\end{lemma}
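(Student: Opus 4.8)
The plan is to prove the integration-by-parts formula \eqref{e:div-th)} first on a smooth open set by a direct computation, and then to record the $L^2$-type bound \eqref{J} as an immediate consequence. The key algebraic identity is a ``rotated'' version of integration by parts: since $\Omega$ is $C^1$ and $U\in W^{1,\infty}_0(\Omega)\cap H^2(\Omega)$, the gradient $\nabla U$ on $\partial\Omega$ is normal to $\partial\Omega$ (because $U$ vanishes on $\partial\Omega$ and is Lipschitz up to the boundary), so $\partial_\nu U\,\nu = \nabla U$ on $\partial\Omega$, hence $\partial_\nu U\,(g\cdot\nabla V) = (\nabla U\cdot\nu)(g\cdot\nabla V)$. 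I would then write $(\nabla U\cdot\nu)(g\cdot\nabla V)$ as a boundary flux of a suitable vector field and apply the divergence theorem on $\Omega$.

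\textbf{Main steps.} First I would establish the boundary identity $\partial_\nu U = \nabla U\cdot\nu$ together with $\nabla U = (\partial_\nu U)\nu$ on $\partial\Omega$, which lets us replace $\partial_\nu U\,(g\cdot\nabla V)$ by $(g\cdot\nabla V)(\nabla U\cdot\nu)$. Next, I would use the elementary pointwise identity, valid in $\R^2$,
\begin{equation*}
(a\cdot\nu)(b\cdot c) = (a\cdot c)(b\cdot\nu) + (a^\perp\cdot c)(b^\perp\cdot\nu)\, ?
\end{equation*}
— more precisely the correct two-dimensional identity that expresses $(\nabla U\cdot\nu)(g\cdot\nabla V)$ as a sum of two genuine normal-component terms, namely a term $\big((\nabla U\cdot g)\nabla V\big)\cdot\nu$ minus a term $\big((\nabla^\perp U\cdot g)\nabla^\perp V\big)\cdot\nu$; this works because $\{\nu,\nu^\perp\}$ is an orthonormal basis of $\R^2$ and one checks the identity by expanding both sides in that basis. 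Applying the divergence theorem to each of the two vector fields $(\nabla U\cdot g)\nabla V$ and $(\nabla^\perp U\cdot g)\nabla^\perp V$ on $\Omega$ — both of which lie in $H^1(\Omega;\R^2)$ under the stated hypotheses (since $\nabla U\cdot g\in H^1(\Omega)$ because $U\in H^2$, $g\in W^{1,\infty}$, $\nabla U\in L^\infty$, and $\nabla V, \nabla^\perp V\in L^2$ with $\mathrm{div}(\nabla V)=\Delta V\in L^2$ and $\mathrm{div}(\nabla^\perp V)=0$) — produces exactly the three interior integrals on the right-hand side of \eqref{e:div-th)}. Finally, for \eqref{J} I would estimate each of the three resulting interior integrals by Cauchy–Schwarz: the first gives $\|\nabla U\|_{L^\infty}\|g\|_{L^2}\|\Delta V\|_{L^2}$, and the second and third each split via the product rule $\nabla(\nabla U\cdot g) = (D^2U)\,g + (\nabla U)\cdot\nabla g$ into a piece controlled by $\|D^2 U\|_{L^2}\|g\|_{L^\infty}\|\nabla V\|_{L^2}$ and a piece controlled by $\|\nabla U\|_{L^\infty}\|\nabla g\|_{L^2}\|\nabla V\|_{L^2}$, and likewise with $\nabla^\perp V$; collecting terms and bounding $\|\nabla V\|_{L^2}, \|\nabla^\perp V\|_{L^2}\le\|V\|_{H^1}$ and $\|U\|_{H^2}\ge\|D^2U\|_{L^2}$ yields the stated inequality with constant $2$.

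\textbf{Expected main obstacle.} The genuinely delicate point is the regularity bookkeeping needed to justify the divergence theorem: $\Omega$ is only $C^1$, $U$ is only $W^{1,\infty}\cap H^2$ (so $D^2U$ is merely $L^2$, with no trace on $\partial\Omega$), and $V$ is only $H^1$ with $\Delta V\in L^2$ (so $\nabla V$ has no classical trace either). The resolution is precisely that the formula is stated so that the boundary integral $J=\int_{\partial\Omega}\partial_\nu U\,(g\cdot\nabla V)$ is never evaluated directly using traces of $D^2U$ or $\nabla V$: instead one reads \eqref{e:div-th)} the other way around, defining/interpreting $J$ through the well-defined right-hand side. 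On a $C^1$ domain, the vector fields $(\nabla U\cdot g)\nabla V$ and $(\nabla^\perp U\cdot g)\nabla^\perp V$ belong to $H(\mathrm{div},\Omega)$, so their normal traces are well defined in $H^{-1/2}(\partial\Omega)$ and the Gauss–Green formula applies; then one verifies that the sum of these two normal traces equals $\partial_\nu U\,(g\cdot\nabla V)$ by the orthonormal-basis identity above, using that $\partial_\nu U$ (equivalently $\nabla U$) is an honest $L^\infty$ boundary function here since $U\in W^{1,\infty}_0(\Omega)$. Once this is set up cleanly — e.g. by first proving everything for smooth $\Omega$ and smooth $U,V$, then passing to the limit via density, using the $C^1$ regularity of the level sets $\Omega^i_\eps$ exactly as in the surrounding argument — the computation and the estimate \eqref{J} are routine.
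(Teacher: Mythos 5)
Your proposal is correct and follows essentially the same route as the paper: decompose $g$ and $\nabla V$ in the orthonormal frame $\{\nu,\tau\}$, use that $\nabla U=(\partial_\nu U)\nu$ on $\partial\Omega$ to rewrite the integrand as the normal component of $(\nabla U\cdot g)\nabla V-(\nabla^\perp U\cdot g)\nabla^\perp V$, apply the divergence theorem (using $\nabla\cdot\nabla^\perp V=0$), and conclude \eqref{J} by Cauchy--Schwarz and the product rule. Your extra care about justifying the Gauss--Green formula on a $C^1$ domain via $H(\mathrm{div})$ normal traces is a welcome addition that the paper leaves implicit.
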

\begin{proof}
If $\nu$ is the exterior normal unit vector to $\partial\Omega$  and 
$\tau=^\perp\!\!\nu$ the unit tangent vector, then, for  $\varphi\in H^1(\Omega)$ and $a=(a_1,a_2)$, using that $\nabla U\cdot\tau=0$ on $\partial\Omega_0$,  we have
\[
  ^\perp\tau = -\nu,\quad
 (a\cdot\nu)\partial_\nu U=a\cdot\nabla U,\quad
(a\cdot\tau)\partial_\nu U=a\cdot\nabla^\perp U,\quad
 \tau\cdot\nabla\varphi=-\nu\cdot\nabla^\perp\varphi.
\]
Then we obtain
\begin{eqnarray*}
\int_{\partial\Omega}\partial_\nu U(g\cdot\nabla V)
&=& 
\int_{\partial\Omega}
(\nabla U\cdot\nu)(g\cdot\nu)(\nabla V\cdot\nu)
+
(\nabla U\cdot\nu)(g\cdot\tau)(\nabla V\cdot\tau)			\\
&=& 
\int_{\partial\Omega}
\left( 
(\nabla U\cdot g)\nabla V
-
(\nabla^\perp U\cdot g)\nabla^\perp V
\right)\cdot\nu\qquad \textrm{(apply divergence theorem to both terms)}			\\
&=& 
\int_\Omega
\nabla\cdot((\nabla U\cdot g)\nabla V)
-
\nabla\cdot((\nabla^\perp U\cdot g)\nabla^\perp V),
\end{eqnarray*}
which proves (\ref{e:div-th)}) because $\nabla\cdot\nabla^\perp=0$. The estimate (\ref{J}) follows.
\end{proof}
{\bf End of the proof of Lemma \ref{l:div-th}.}\\

\noindent
We apply Lemma \ref{l:div-th} on $\Omega=\Omega_\eps^i$ to estimate $I^\eps_2,I^\eps_3$ in (\ref{e:e''-I11234}). For $I^\epsilon_2$, we choose $U=U_i-\epsilon, V=U'_0, g=\xi'(0)$ (recall that $\Delta U'_0=0$) and for $I^\eps_3$, we choose $U=U_i-\epsilon, V=V_j=\partial_jU_i, g=g_j=\xi'_j(0)\xi'(0), j=1,2$: here $-\Delta V_j=\partial_jf^+$ or $\partial_jf^-$. Next, we apply the estimate (\ref{J}) to each of these choices and we are obviously led to estimates independent of $\eps$. For $I^\eps_4$, we make a direct easy estimate. Together also with (\ref{e:e''-I2}) and using Young inequality we obtain:
\begin{eqnarray}
\hspace*{-8mm}
 |e''(0)|
 &\leq&
 C(\|\nabla U_0'\|_{L^2(\Omega_0)}^2
 +
 \|\nabla\xi'\|_{L^2(\Omega_0)}^2+\|\xi'\|_{L^\infty(\Omega_0)}^2 + 
  \|\xi'\|_{L^2(\partial\Omega_0)}^2 +
  \|\xi''\|_{L^1(\partial\Omega_0)}),		\label{e:|e''(0)|<1}
\end{eqnarray}
\begin{equation}\label{C}
{\rm where\;}\; C=C(\|f\|_{L^\infty(\Omega_0)\cap H^1(\Omega_0)},\|U_i\|_{W^{1,\infty}(\Omega_0)},\|U_i\|_{H^2(\Omega_0)},
\, i=0,1,2).
\end{equation}

Now, let us write the estimate (\ref{e:|e''(0)|<1}) in terms of $v$.
First note that if $\alpha,\beta \in H^{1/2}(\partial\Omega_0)\cap L^\infty(\partial\Omega)$ 
then $\alpha\beta\in H^{1/2}(\partial\Omega_0)\cap L^\infty(\partial\Omega_0)$  and 
\[
\|\alpha\beta\|_{H^{1/2}(\partial\Omega_0)\cap L^\infty(\partial\Omega_0)}
\leq 
C
\|\alpha\|_{H^{1/2}(\partial\Omega_0)\cap L^\infty(\partial\Omega_0)}
\|\beta\|_{H^{1/2}(\partial\Omega_0)\cap L^\infty(\partial\Omega_0)}.
\]
(using the easy fact that $H^1(\Om_{0})\cap L^\infty(\Om_{0})$ is an algebra, and that the $H^{1/2}(\partial\Om_{0})$-norm is equivalent to the $H^1(\Om_{0})$-norm of the harmonic extension in $\Om_{0}$).
Also, we point out that the transformation 
$\psi=\psi(r,\theta):=\frac{r}{u_0(\theta)}e^{i\theta}$ is bi-Lipschitz near $\T$ and $\psi(\T)=\partial\Omega_0$. 
Then $\gamma\in H^{1/2}(\partial\Omega)$ if and only if $\gamma\circ\psi\in H^{1/2}(\T)$, and their $H^{1/2}$-norms are equivalent.

Let us remind that, according to the choice of $\xi$ in (\ref{e:xi(t)}), we have $\xi'(0)=-\frac{v}{u_0^2}e^{i\theta}$, 
$\xi''(0)=2\frac{v^2}{u_0^3}$ on $\partial\Omega_0$. 
Then we obtain, with the same dependence of the various constants $C$ as in (\ref{C})
\begin{eqnarray}\label{es1}
\|\nabla U_0'\|_{L^2(\Omega_0)}^2
\leq
C|\xi'\cdot\nabla U_0|^2_{H^{1/2}(\partial\Omega)}	
\leq
C\|\xi'\|^2_{H^{1/2}(\partial\Omega)\cap L^\infty(\Omega_0)}	
&\leq&
C\|v\|^2_{H^{1/2}(\T)\cap L^\infty(\T)},\\[2mm]			
\|\xi'\|_{L^\infty(\partial\Omega_0)}
\leq
C\|v\|_{L^\infty(\T)},	\;\;	
\|\xi'\|^2_{L^2(\partial\Omega_0)} + \|\xi''\|_{L^1(\partial\Omega_0)}
&\leq&
C\|v\|^2_{L^2(\T)}.	\label{es2}
\end{eqnarray}
All these estimates are valid for all choices of $\xi$ as in (\ref{e:xi(t)}). Let 
$${\cal W}:=\{w\in W^{1,\infty}(\Om_0),\; \;w_{|\partial\Om_0}=-\frac{v}{u_0^2}e^{i\theta}\;\}. $$
Given $w\in {\cal W}$, let us choose $\xi(t):=\zeta(t)+t(w-\zeta'(0))$, where $\zeta$ is the $W^{1,\infty}$-extension as given in \eqref{eq:xi-radial}, namely
$$\zeta(t)\left(\frac{1}{u_0(\theta)},\theta\right)=\left(\frac{1}{u_0(\theta)+tv(\theta)}-\frac{1}{u_0(\theta)}\right) e^{i\theta}\eta(r,\theta),\;\eta\in C_0^\infty(\R^2),$$
with $\eta=0$ (resp. $\eta=1$) in a neighborhood of the origin (resp. of $\partial\Omega_0$). 
Then, $\xi$ is as in (\ref{e:xi(t)}) and $\xi'(0)=w$. Therefore, the estimate (\ref{e:|e''(0)|<1}) together with (\ref{es1}), (\ref{es2}) leads to
\begin{equation}\label{e:|e''(0)|<|nablaxi|}
\forall w\in {\cal W},\; |e''(0)|
\leq
C\left(\|\nabla w\|_{L^2(\Omega_0)}^2+\|v\|^2_{H^{1/2}(\T)\cap L^\infty(\T)}+\|w\|^2_{L^\infty(\Omega_0)}\right).
\end{equation}
Let us introduce
$$w_0\in H^1(\Omega_0),\;\;\Delta w_0=0\;{\rm on}\; \Omega_0,\;\;(w_0)_{|\partial\Om_0}=-\frac{v}{u_0^2}e^{i\theta} \;\;[{\rm or}\; w_0-\zeta'(0)\in H^1_0(\Omega_0)]\;. $$
Let now $\delta_n$ be a sequence of $C_0^\infty(\Omega_0)$-functions converging to $w_0-\zeta'(0)$ in $H^1_0(\Omega_0)$ and let $w_n:=\inf\{\delta_n+\zeta'(0),\|w_0\|_{L^\infty(\Omega_0)}\}$. Then, $w_n\in {\cal W}$ and converges in $H^1(\Omega_0)$ to $w_0$. Applying (\ref{e:|e''(0)|<|nablaxi|}) with $w_n$ in place of $w$ and passing to the limit yields:
\begin{equation}\label{fin}
|e''(0)|
\leq
C\left(\|\nabla w_0\|_{L^2(\Omega_0)}^2+\|v\|^2_{H^{1/2}(\T)\cap L^\infty(\T)}+\|w_0\|^2_{L^\infty(\Omega_0)}\right).
\end{equation}
But, since $w_0$ is harmonic, 
$$\|\nabla w_0\|_{L^2(\Omega_0)}\leq \|w_0\|_{H^{1/2}(\partial\Omega_0)}\leq C\|v\|_{H^{1/2}(\T)},\;\;\|w_0\|_{L^\infty(\Omega_0)}\leq\|w_0\|_{L^\infty(\partial\Omega_0)}\leq C\|v\|_{L^\infty(\T)}.$$
Finally, the estimate (\ref{fin}) leads to
 $$|e''(0)|\leq C\|v\|^2_{H^{1/2}(\T)\cap L^\infty(\T)}.$$
\hfill$\Box$

\subsubsection{First eigenvalue of the Laplace operator with Dirichlet boundary conditions}\label{ssect:la1''}

The estimate of  Proposition \ref{prop:dir2} also holds for $\lambda_{1}(\Om_{u})$, the first Laplace eigenvalue (see Section \ref{sect:derivative1}), namely
\begin{equation}\label{vp}
|l_1''(0)|\leq C\|v\|^2{H^{1/2}(\T)}\cap L^\infty(\T),
\end{equation}
where $l_1(t)=\lambda_1(\Omega_{u_0+tv})$. As the computations are very similar, we will only sketch the proof.

\subsubsection*{Proof of (\ref{vp}).}
As for $e_f$, for $v\in W^{1,\infty}(\T)$ fixed and $|t|$ small we consider $l_1(t):=\lambda_1(\Omega_t)$ and $U_t$, the first eigenvalue and  the corresponding eigenfunction of $-\Delta$ in $\Omega_t:=\Omega_{u_0+tv}$.
As in Lemma \ref{l:e''(0)=} we can show that
\begin{equation}\label{e:l1''(0)=}
l_1''(0)=
-
\int_{\Omega_0}U_0'\Delta U_0' + U_0\Delta U_0''
=:
I_1+I_2.
\end{equation}
Here  $U_0'$ and $U_0''$ satisfy
\begin{eqnarray*}
-\Delta U_0'
=
l_1 U_0'+l_1'U_0\quad \textrm{in}\;\; \Omega_0,
\qquad
U_0'
=
-\xi'(0)\cdot\nabla U_0\;\; 
&\textrm{on}&
\;\partial\Omega_0,
\qquad
\int_{\Omega_0}U_0U_0' = 0,\\ 
-\Delta U''_0 =l_1 U_0'' + 2l_1' U_0'+l_1'' U_0
&\textrm{in}&
\;\;\Omega_0,
\qquad
\int_{\Omega_0}|U_0'|^2+U_0U_0''=0,		\\
U''_0
=
\hat{U}_0'' -\left(
2\xi'(0)\cdot\nabla U_0' + 
\xi'(0)\cdot D^2U_0\cdot\xi'(0) + \xi''(0)\cdot\nabla U_0\right)\quad 
&\textrm{in}&
\;\;\Omega_0,
\end{eqnarray*}
where $l_1=l_1(0)$, $l_1'=l_1'(0)$, $l_1''=l_1''(0)$.
Then considering $\Omega_\eps=\{x\in\Omega_0, U_0>\eps\}$ as in the proof of Proposition \ref{prop:dir2} (note that $U_0>0$ on $\Omega_0$ here), we have :
\begin{eqnarray}
 I_1
 &=&
 \int_{\Om_0}U_0'(l_1 U_0'+l_1'U_0)
 =
 l_1 \int_{\Om_0}|U_0'|^2,			\label{e:l1''-I1}\\
 I_2
 &=&
 -\lim_{\eps\to0}
 \int_{\Om_\eps}U_0\Delta U_0'' 
 =
 -\lim_{\eps\to0}
 \int_{\Om_\eps} \eps \Delta U_0''  + (U_0-\eps)\Delta U_0''	
 =
 -\lim_{\eps\to0}
  \int_{\Omega_\epsilon}(U_0-\eps)\Delta U_0''					\nonumber\\
&=&
\lim_{\eps\to0}
\int_{\Om_\eps}U_0''(-\Delta U_0)	
-
\lim_{\eps\to0}
\int_{\partial\Om_\eps}(U_0-\eps)\partial_{\nu_\eps} U_0''-U_0''\partial_{\nu_\eps}U_0		\nonumber\\
&=&
-l_1\int_{\Om_0}|U_0'|^2
+
\lim_{\eps\to0}\int_{\partial\Om_\eps}U_0''\partial_{\nu_\eps}U_0.		\label{e:l1''-I2}
\end{eqnarray}
Combining \eqref{e:l1''(0)=} with the last two equalities gives
\[
 l_1''(0)
 =
 \lim_{\eps\to0}\int_{\partial\Om_\eps}U_0''\partial_{\nu_\eps}U_0.
\]
Then we proceed exactly as in Proposition \ref{prop:dir2}, and obtain for $l_1''(0)$
an estimate exactly similar to (\ref{e:|e''(0)|<1}).

Next, we prove that 
$\|\nabla U_0'\|_{L^2(\Omega_0)}\leq C\|\xi'(0)\|_{H^1(\Omega_0)}$. As $U_0'=\hat{U}_0'-\xi'(0)\cdot\nabla U_0$ it is enough to prove 
$\|\nabla\hat{U}'_0\|_{L^2(\Omega_0)}\leq C\|\xi'(0)\|_{H^1(\Omega_0)}$.
One can verify that $\hat{U}'_0$ satisfies
\[
\hat{U}_0'\in H^1_0(\Omega_0)\cap H^2(\Omega_0),\quad
\Delta \hat{U}_0' + l_1 \hat{U}_0' =2{\rm trace}([\nabla\xi']\cdot[D^2U_0]) - l_1'U_0,\quad
\int_{\Omega_0}\hat{U}_0' U_0=0.
\]
Using the convexity of $\Omega_0$ and Fredholm alternative theorem, we can prove that the operator  
\[
V\in (H^1_0(\Omega_0)\cap H^2(\Omega_0))\backslash {\rm span}\{U_0\} 
\mapsto 
\Delta V +l_1 V \in L^2(\Omega_0)\cap\{ h,\; \int_{\Om_0} hU_0=0\},
\]
defines an isomorphism (see for example \cite{GT}), which together with the formula for $l_1'(0)$ provides the required estimate for $\hat{U}_0'$. Therefore, as for $e''(0)$, for all $\xi$ as in (\ref{e:xi(t)}), we have
\[
|l_1''(0)|
\leq
C\left(\|\nabla\xi'(0)\|_{L^2(\Omega_0)}^2+
\|v\|^2_{H^{1/2}(\T)\cap L^\infty(\T)}+\|\xi'(0)\|_{L^\infty(\Omega_0)}\right).
\]
Then we complete the proof as in Proposition \ref{prop:dir2}.
\hfill$\Box$

\section{Remarks and perspectives}

\subsection{Localization of our two approaches}\label{sect:u'3}

As explained in the introduction, the approaches leading to our two families of results are very "local" with respect to the boundary of the optimal shape. Indeed, each proof uses test functions $v\in W^{1,\infty}$ whose support may be as small as we want and only covers the portion of the boundary that we want to analyze. To show how this can be exploited, we give now -without proof- , an example of a result which can be reached by the same two methods when applied locally.

Let us consider the following optimization problem where $G:(\theta,u,q)\in\T\times\R\times\R\to\R$ is assumed to be of class $C^2$ and $a,b\in(0,\infty)$:

\begin{equation}\label{eq:min3}
\left\{\begin{array}{l} 
u_0\in W^{1,\infty}(\T),\;\;j(u_0)=\min\{j(u),\; u''+u\geq 0,\;\;a\leq u\leq b\}, \\[3mm]
\textrm{where }
j(u)=\displaystyle{\int_{\mathbb T} G\left(\theta,u(\theta),u'(\theta)\right) d\theta}.
\end{array}\right.
\end{equation}
We define $\T_{in}$ as in (\ref{eq:inside}) and we introduce the partition $\T_{in}=\T_{+}\cup \T_{0}\cup \T_{-}$ where\\ 
$\T_{+}:=\{\theta\in \T_{in}\;;\;\widetilde{G_{qq}}(\theta)\in (0,\infty)\}$, (recall $\widetilde{G_{qq}(\theta)}=\int_0^1G_{qq}\left(\theta, u_0(\theta), tu_0'(\theta^+)+(1-t)u_0'(\theta^-)\right)dt$,\\
$\T_-:=\{\theta\in\T_{in},\; [G_{qq}\left(\theta,u_0(\theta), u_0'(\theta^-)\right), G_{qq}\left(\theta, u_0(\theta), u_0'(\theta^+)\right)]\subset (-\infty,0)\}$,\\
$\T_0:=\T_{in}\setminus \left(\T_+\cup\T_-\right)$.\\
Then\\
(i) $\T_+$ is open and $u_0''\in L^\infty_{loc}(\T_+)$, so that $u_0\in W_{loc}^{2,\infty}(\T^+)$,\\
(ii) There is no accumulation point of $\Supp(u_0''+u_0)$ in  the open set $\T_-$; in other words, $[\theta \in \T_-\to \partial \Omega_{u_0}(\theta)]$ is locally polygonal.\\
The situation on $\T_{0}$ requires a complementary study specific to each functional.

\subsection{Very singular optimal shapes}

In this paper, we gave some sufficient conditions on the shape functional so that an optimal shape be smooth or polygonal.
But there exist convex sets which are not of this type, and in a certain sense have ``intermediate regularity''.
Namely, there are convex sets which are singular in the sense that they do not have corners (they are $ C^1$), but their curvature is zero almost everywhere. As an example, one may consider any convex set such that $u''+u$ is a Radon measure, without mass, but singular with respect the Lebesgue measure.

Let us mention a shape optimization problem whose solution is neither regular nor polygonal (see \cite{LM} for an analysis of this problem).
Let $\Om_{0}$ be a convex set, $V_{0}=|\Om_{0}|$, $P_{0}=P(\Om_{0})$ and $D=(\Om_{0})_{T}=\{x\in\R^2,\; d(x,\Om_{0})<T\}$. Then Theorem 8 in \cite{LM} states that:
\begin{equation}\label{eq:distance}
J(\Om_{0})=\min \{J(\Om)\;/\;\Om\subset D\textrm{ convex such that }P(\Om)=P_{0},\; |\Om|=V_{0}\},
\end{equation}
where $J$ is the distance functional:
$$J(\Om):=\int_{D}d(x,\Om)dx.$$
Since $\Omega$ is {\em any convex set}, one cannot expect any geometrical property for a minimizer of \eqref{eq:distance} without extra conditions on $D$, $V_{0}$ and $P_{0}$. Remark also that the box $D=(\Om_{0})_{T}$ is $ C^{1,1}$ here.

\subsection{Problem without perimeter}

An interesting problem, which has not been analyzed in this paper, is the following (we use the notation of Section \ref{sect:derivative2}):
\begin{equation}\label{eq:maxE}
\max \{E_{f}(\Om), |\Om|=V_{0}, \Om\textrm{ convex }\subset D\}.
\end{equation}
It is easy to prove the existence of an optimal shape $\Om_{0}$.
In this situation, we expect the term $E_{f}(\Om)$ to be leading over $|\Omega|$ (whereas the perimeter was the stronger term in the examples solved in this paper). So we are naturally led to the following question : do there exist $\alpha>0$, $\beta,\gamma\geq 0$ such that 
\begin{equation}\label{eq:e''}
\forall v\in W^{1,\infty}(\T),\quad
e''(0)\geq \alpha |v|_{H^{1/2}}^2-\gamma|v|_{H^{1/2}}\|v\|_{L^2}-\beta \|v\|_{L^2}^2\; ?
\end{equation}
A consequence of such an estimate, would be that any solution of \eqref{eq:maxE} is locally polygonal inside $D$ (the same strategy as in the proof of Theorem \ref{th:conc} would provide the result, we just need to adapt Lemma \ref{lem:poincare} to $H^{1/2}$-norms). 
It is easy to prove that \eqref{eq:e''} holds if $v$ is supported by a subset of 
$\T$ which parametrizes a $ C^2$ strictly convex part of $\partial\Om_{0}$. Therefore, with the same proof as for Theorem \ref{th:conc}, we are in position to deduce that $\partial\Om_{0}\cap D$ is nowhere $ C^2$ with a positive curvature. But it is not clear whether estimate (\ref{eq:e''}) remains valid in a more general situation and, consequently, whether $\partial\Om_{0}\cap D$ is a polygon or not.

\section*{Acknowledgments}
The work of J. Lamboley and M. Pierre is part of the project ANR-09-BLAN-0037 {\it Geometric 
analysis of optimal shapes (GAOS)} financed by the French Agence Nationale de la Recherche (ANR).
A. Novruzi thanks the agency NSERC, Canada, for the financial support of this work, and ENS Cachan, Antenne de Bretagne, for the generous hospitality during his visits there.

\bibliographystyle{plain}

\end{document}